\DeclareMathOperator{\Ima}{Im}
\DeclareMathOperator{\dom}{dom}
\newcommand{\N}{\mathbb{N}}
\newcommand{\C}{\mathbb{C}}
\newcommand{\Z}{\mathbb{Z}}
\theoremstyle{definition}
\newtheorem{dfn}{Definition}[section]
\theoremstyle{plain}
\newtheorem{lm}[dfn]{Lemma}
\newtheorem{pr}[dfn]{Proposition}
\newtheorem{cor}[dfn]{Corollary}
\newtheorem{thm}[dfn]{Theorem}
\theoremstyle{remark}
\newtheorem{rem}[dfn]{Remark}
\title{A characterization of rationality in free semicircular operators}
\author{Akihiro Miyagawa}
\address{Department of Mathematics, Kyoto University, Kitashirakawa Oiwake-cho, Sakyo-ku, 606-8502, Japan}
\email{miyagawa.akihiro.43v@st.kyoto-u.ac.jp}
\begin{document}
\maketitle
\begin{abstract}
For free semicircular elements realized on the full Fock space, we prove an equivalence between rationality of operators obtained from them and finiteness of the rank of their commutators with right annihilation operators. This is an analogue of the result for the reduced $C^*$-algebra of the free group by G. Duchamp and C. Reutenauer which was extended by P. A. Linnell to densely defined unbounded operators affiliated with the free group factor. While their result was motivated by quantized calculus in noncommutative geometry, we state our results in terms of free probability theory.       
\end{abstract}
\section{Introduction}

In 1881, Kronecker \cite{Kro1881} found an interesting connection between Hankel matrices and rational functions. In terms of functional analysis, his result shows the equivalence between finite rank Hankel operators and bounded rational functions on the unit circle (see also Section \ref{kronecker's theorem}). This connection is non-trivial and relies on a beautiful combination of analytic estimates and purely algebraic properties.    

The argument of Kronecker's theorem also appears in quantized calculus for noncommutative geometry. A. Connes conjectured an analogue of this theorem for the reduced free group $C^*$-algebra $C^*_{\mathrm{red}}(\mathbb{F}_d)$ in his book \cite[Section 4.5]{Connes94}. 
He considered the commutator $[F, \cdot]$ where $F$ arises from a free action of the free group on a tree, and it can be viewed as an analogue of bounded Hankel operators. His conjecture was that rationality of $a$, for any elements in $C^*_{\mathrm{red}}(\mathbb{F}_d)$, is equivalent to finiteness of the rank of $[F,a]$. This conjecture was solved by G. Duchamp and C. Reutenauer \cite{DR97} and extended by P. A. Linnell \cite{Lin00} to densely defined closed operators affiliated with the free group factor. Their proof is based on facts from noncommutative rational series which are also related to Hankel matrices indexed by the free monoid (see \cite[Theorem 2.1.6]{BR10}).     

In this paper, we prove a similar phenomenon for a $d$-tuple of free semicircular elements, instead of the generators of the free group. In free probability theory, free semicircular elements are of central importance. They are limit objects not only for free central limit theorem but also for empirical eigenvalue distributions of independent Gaussian unitary ensembles which are typical random matrix models (we recommend \cite{VDN92}, \cite{NS06}, and \cite{MS17} for the textbooks of free probability theory). 
Via the GNS-representation, they are represented on the full Fock space $\mathcal{F}(H)$ by the left annihilation and creation operators. While the operator $F$ has a role to determine rationality of elements in the reduced free group $C^*$-algebra, we prove that the right annihilation operators $\{r_i^*\}_{i=1}^d$ as well as the right creation operators $\{r_i\}_{i=1}^d$ characterize rationality of operators generated by free semicircular elements $\bm{s}=(s_1,\ldots,s_d)$. 

Our main theorem can be stated as follows.
\begin{thm}[Theorem \ref{main_result}]
Let $a$ be in a von Neumann algebra $L^{\infty}(\bm{s})$ generated by $\bm{s}$. Then $\{[r^*_i,a]\}_{i = 1}^d$ are finite rank operators on $\mathcal{F}(H)$ if and only if  $a \in C_{\mathrm{div}}(\bm{s})$. In addition, we have
$$C_{\mathrm{div}}(\bm{s}) = C_{\mathrm{rat}}(\bm{s}) \subset \overline{\C \langle \bm{s} \rangle}$$
where $\overline{\C \langle \bm{s} \rangle}$ is the norm closure of noncommutative polynomials $\C \langle \bm{s} \rangle$ in $L^{\infty}(\bm{s})$.  
\end{thm}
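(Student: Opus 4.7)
I would handle the two implications of the equivalence in turn, extracting the identifications $C_{\mathrm{div}}(\bm{s})=C_{\mathrm{rat}}(\bm{s})\subset\overline{\C\langle\bm{s}\rangle}$ from the construction used for the harder direction.

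\emph{Easy direction.} A direct Fock-basis calculation yields $[r^*_i,\ell_j]=\delta_{ij}P_\Omega$ and $[r^*_i,\ell_j^*]=0$, hence the rank-one identity $[r^*_i,s_j]=\delta_{ij}P_\Omega$. Since $[r^*_i,\cdot]$ is a derivation and the finite rank operators form a two-sided ideal in $B(\mathcal{F}(H))$, the Leibniz rule gives $[r^*_i,p(\bm{s})]$ finite rank for every polynomial $p\in\C\langle\bm{s}\rangle$, and the identity $[r^*_i,a^{-1}]=-a^{-1}[r^*_i,a]a^{-1}$ preserves this property under inversion. Thus $\mathcal{R}:=\{a\in L^\infty(\bm{s}):[r^*_i,a]\text{ is finite rank for every }i\}$ is a subalgebra of $L^\infty(\bm{s})$ containing $\C\langle\bm{s}\rangle$ and closed under inverses in $L^\infty(\bm{s})$, so $C_{\mathrm{div}}(\bm{s})\subset\mathcal{R}$ by minimality.

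\emph{Hard direction.} The idea is a noncommutative Kronecker--Fliess argument. To $a$ I would associate the formal power series $\sigma(a)=\sum_w\langle e_w,a\Omega\rangle\,w\in\C\langle\langle x_1,\ldots,x_d\rangle\rangle$ and its Hankel matrix $\mathbf{H}(u,v)=\langle e_{uv},a\Omega\rangle$. Writing $R_v:=r^*_{v_1}\cdots r^*_{v_l}$ for a word $v=v_1\cdots v_l$, the identity $R_v^*e_u=e_{uv}$ gives $\langle a\Omega,e_{uv}\rangle=\langle R_va\Omega,e_u\rangle$, so the $v$-column of $\mathbf{H}$ is the vector $R_va\Omega\in\mathcal{F}(H)$. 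A Leibniz expansion of $[R_v,a]$ together with $R_v\Omega=0$ for nonempty $v$ gives $R_va\Omega=R_{v_1\cdots v_{l-1}}([r^*_{v_l},a]\Omega)$, placing every column inside the smallest $(r^*_j)$-invariant subspace of $\mathcal{F}(H)$ containing the finite-dimensional space $\C a\Omega+\sum_i\Ima([r^*_i,a])$. Finite rank of $\mathbf{H}$ is equivalent to this subspace being finite-dimensional; granting that, Fliess--Sch\"utzenberger yields that $\sigma(a)$ is a rational noncommutative series. A linear representation $(\lambda,\mu,\gamma)$ of dimension $n$ then produces $M:=\sum_i\mu(x_i)\otimes s_i\in M_n(\C)\otimes L^\infty(\bm{s})$; an appropriate matrix entry of $(I-M)^{-1}$ has Fock vector $\sigma(a)$, hence equals $a\Omega$ via the free Wick isomorphism $L^2(\bm{s})\cong\mathcal{F}(H)$, exhibiting $a$ as a matrix entry of a resolvent, i.e., $a\in C_{\mathrm{rat}}(\bm{s})$, and the coincidence $C_{\mathrm{div}}(\bm{s})=C_{\mathrm{rat}}(\bm{s})$ becomes formal since this construction uses only polynomials and inverses. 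A rescaling $s_i\mapsto\varepsilon s_i$ making use of $\|s_i\|=2<\infty$ lets a suitable Neumann series for the resolvent converge in operator norm, placing $a$ in $\overline{\C\langle\bm{s}\rangle}$.

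\emph{Main obstacle.} The crux is verifying that the smallest $(r^*_j)$-invariant subspace containing $\C a\Omega+\sum_i\Ima([r^*_i,a])$ is finite-dimensional. A priori the successive applications $r^*_j\Ima([r^*_i,a])$ may enlarge the subspace, so one must control the ascending chain; this requires exploiting the symmetry $[r^*_j,[r^*_i,a]]=[r^*_i,[r^*_j,a]]$ (from Jacobi together with $[r^*_i,r^*_j]=0$) and the specific structure of $a$ as an element of $L^\infty(\bm{s})$. This finite-dimensionality assertion is the direct analogue of the pivotal step in the Duchamp--Reutenauer argument for the free-group setting.
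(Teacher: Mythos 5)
Your easy direction is fine and matches the paper's strategy of showing the set $\mathcal{R}$ is division (indeed rationally) closed. The hard direction, however, has two genuine gaps beyond the obstacle you flag, and the flagged obstacle itself needs a specific identity that you do not have.

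\textbf{The flagged obstacle is real, and Jacobi does not dispatch it.} You correctly observe that $R_v a\Omega = r^*_{v_1}\cdots r^*_{v_{l-1}}\bigl([r^*_{v_l},a]\Omega\bigr)$, so the columns of $\mathbf{H}$ lie in the smallest $(r^*_j)$-invariant subspace containing $\C a\Omega+\sum_i\Ima([r^*_i,a])$, and you ask whether that subspace is finite-dimensional. The symmetry $[r^*_j,[r^*_i,a]]=[r^*_i,[r^*_j,a]]$ gives you nothing here: it merely reshuffles which commutator sits outermost and leaves you with another application of an $r^*$ to an image vector, so the chain does not visibly terminate. What actually closes this off is the translation identity the paper proves in Lemma \ref{fundamental_equality_free}, namely $[r^*_i,U_v]\hat{U}_w=\hat{U}_{v(iw^*)^{-1}}$; by density this yields $[r^*_i,a]\hat{U}_w=\sum_v\alpha_v\hat{U}_{v(iw^*)^{-1}}$, and then one checks directly that $r^*_j\bigl([r^*_i,a]\hat{U}_w\bigr)=[r^*_j,a]\hat{U}_{wi}$. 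Hence $\bigcup_i\Ima([r^*_i,a])$ together with $\C\hat{a}$ is already invariant under all $r^*_j$, which is exactly the finite-dimensionality you need. Without this Chebyshev-based computation the recognizability step is not established.

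\textbf{Your resolvent construction produces the wrong operator.} With a linear representation $(\lambda,\mu,\gamma)$ you set $M=\sum_i\mu(x_i)\otimes s_i$ and claim a matrix entry of $(I-M)^{-1}$ has Fock vector $\sigma(a)=\sum_v\alpha_v e_v$. But the matrix entry equals $\sum_v\alpha_v s^v$, and $s^v\Omega\ne e_v$: expanding $s_i=l_i+l_i^*$ produces lower-degree contamination, so $\sum_v\alpha_v s^v\Omega\ne a\Omega$. To make $e_v$ appear one must use the Chebyshev elements $U_v$, which are not multiplicative in $v$; this is precisely what the paper's Lemma \ref{correspondence} repairs by building $2d\times 2d$ matrices $S_i$ over $\C\langle\bm{s}\rangle$ so that $U_v$ is a matrix entry of $S_{i_1}^{k_1}\cdots S_{i_n}^{k_n}$, and it is $\sum_m V(\bm{S})^m$ with $V(\bm{X})=\sum_i\mu(i)X_i$ whose matrix entry gives $a$.

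\textbf{The convergence argument does not work by rescaling.} Sending $s_i\mapsto\varepsilon s_i$ evaluates a different operator, not $a$. The paper instead exploits that $\sum_v|\alpha_v|^2<\infty$ and $\sum_v\alpha_v X^v$ rational imply (via Hadamard product and the classical Kronecker estimate, Corollary \ref{estimation of coefficients}) that $\sum_{|v|=m}|\alpha_v|^2\le Mc^m$ with $c<1$, and then the Haagerup inequality (Lemma \ref{haagerup} and Lemma \ref{haagerup-application}) converts this $\ell^2$-decay into operator-norm decay $\|\sum_{|v|=m}\alpha_v S^v\|\lesssim (m+1)c^{m/2}$. That is the mechanism placing $a$ in $\overline{\C\langle\bm{s}\rangle}$; the norm bound $\|s_i\|=2$ plays no role. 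Finally, the passage from the convergent resolvent to membership in $C_{\mathrm{div}}(\bm{s})$, not merely $C_{\mathrm{rat}}(\bm{s})$, is not ``formal'' either: it requires the Duchamp--Reutenauer lemma (Lemma \ref{rational_entry}) that entries of $(1-x)^{-1}$ lie in the division closure of the entries of $x$ once $\|x^m\|\to 0$.
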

In this theorem, we consider two notions of rationality, division closure $C_{\mathrm{div}}(\bm{s})$ and rational closure $C_{\mathrm{rat}}(\bm{s})$ of $\C \langle \bm{s} \rangle$ in $L^{\infty}(\bm{s})$ (see Definition \ref{rationality}). Roughly speaking, both closures describe bounded operators constructed from $\C \langle \bm{s} \rangle$ by combinations of algebraic operations $+,-,\times,\cdot^{-1}$.

Moreover, as a consequence of this theorem, we prove an analogue of Linnell's work which extends the above theorem to the algebra $\widetilde{L^{\infty}(\bm{s})}$ of closed densely defined (unbounded) linear operators affiliated with $L^{\infty}(\bm{s})$.
\begin{thm}[Theorem \ref{Linnell}]
Let $u \in \widetilde{L^{\infty}(\bm{s})}$ represented by $u = f^{-1} a = b g^{-1}$ for $a,b,f,g \in L^{\infty}(\bm{s})$. Then $\{f r_i^* b - a r_i^* g\}_{i=1}^d$ are finite rank operators if and only if $u$ belongs to the division closure $D(\bm{s})$ of $\C \langle \bm{s} \rangle$ in $\widetilde{L^{\infty}(\bm{s})}.$  
Moreover, we have $D(\bm{s}) \cap L^{\infty}(\bm{s}) = C_{\mathrm{div}}(\bm{s})$, and for any $u \in D(\bm{s})$, we can take $a,b,f,g$ so that they belong to $C_{\mathrm{div}}(\bm{s})$.   
\end{thm}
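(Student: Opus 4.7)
The plan is to reduce the theorem to the bounded case of Theorem \ref{main_result} via an algebraic identity that rewrites $fr_i^*b - ar_i^*g$ in terms of honest commutators with $r_i^*$. Whenever $u = f^{-1}a = bg^{-1}$ one has $fb = ag$, and a direct computation exploiting this relation yields
\[
fr_i^*b - ar_i^*g \;=\; -[r_i^*,f]b + [r_i^*,a]g \;=\; f[r_i^*,b] - a[r_i^*,g].
\]
Finite rank of the right-hand side then follows whenever $a,b,f,g$ all lie in $C_{\mathrm{div}}(\bm{s})$, by applying Theorem \ref{main_result} to each commutator and multiplying by the remaining bounded operator.

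First I would establish the representability statement: every $u \in D(\bm{s})$ admits a representation $u = f^{-1}a = bg^{-1}$ with $a,b,f,g \in C_{\mathrm{div}}(\bm{s})$. This is proved by induction on the construction of $u$ inside $D(\bm{s})$: polynomials are trivial with $f = g = 1$; sums and products are handled by producing common denominators within $C_{\mathrm{div}}(\bm{s})$ using the Ore structure of $\widetilde{L^{\infty}(\bm{s})}$ over $L^{\infty}(\bm{s})$; and inversion swaps $(f,a,b,g) \mapsto (a,f,g,b)$. Combined with the identity above and Theorem \ref{main_result}, this immediately yields the ``if'' direction for the distinguished representation with rational coefficients; promoting it to an arbitrary representation requires verifying that the finite rank class of $fr_i^*b - ar_i^*g$ is intrinsic to $u$, which follows by writing the difference between any two representations as a $C_{\mathrm{div}}$-linear combination of commutators governed by the identity.

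The equality $D(\bm{s}) \cap L^{\infty}(\bm{s}) = C_{\mathrm{div}}(\bm{s})$ then drops out quickly. The inclusion $\supseteq$ is immediate; for $\subseteq$, take $u \in D(\bm{s}) \cap L^{\infty}(\bm{s})$ and a representation $u = f^{-1}a$ with $f,a \in C_{\mathrm{div}}(\bm{s})$. Theorem \ref{main_result} gives $[r_i^*,f]$ and $[r_i^*,a]$ finite rank, and the Leibniz identity $[r_i^*,fu] = [r_i^*,f]u + f[r_i^*,u]$ combined with $fu = a$ rearranges to $f[r_i^*,u] = [r_i^*,a] - [r_i^*,f]u$, of finite rank. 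Since $f^{-1}$ exists in $\widetilde{L^{\infty}(\bm{s})}$, the operator $f$ is injective on $\mathcal{F}(H)$, so the image of $[r_i^*,u]$ sits inside the preimage under $f$ of a finite-dimensional subspace, itself finite-dimensional by injectivity. Hence $[r_i^*,u]$ is finite rank, and Theorem \ref{main_result} places $u$ in $C_{\mathrm{div}}(\bm{s})$.

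The main obstacle I expect is the ``only if'' direction in full generality. Given $u$ with some representation $u = f^{-1}a = bg^{-1}$ for arbitrary $a,b,f,g \in L^{\infty}(\bm{s})$ such that $fr_i^*b - ar_i^*g$ is finite rank for all $i$, the identity presents $-[r_i^*,f]b + [r_i^*,a]g$ as finite rank but does not a priori isolate finite rank of $[r_i^*,f]$ or $[r_i^*,a]$ individually. My plan is to bootstrap: use auxiliary representations of $u$ obtained by multiplying numerator and denominator by polynomials $p \in \C\langle\bm{s}\rangle$ (which automatically lie in $C_{\mathrm{div}}(\bm{s})$), exploit that Theorem \ref{main_result} already applies to the polynomial factors, and separate the contributions of numerator and denominator by varying $p$. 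Once $a$ and $f$ are placed in $C_{\mathrm{div}}(\bm{s})$, the representation $u = f^{-1}a$ exhibits $u$ in $D(\bm{s})$. Carrying out this separation cleanly is the delicate part.
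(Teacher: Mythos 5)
Your outline correctly identifies the algebraic identity $fr_i^*b - ar_i^*g = f[r_i^*,b]-a[r_i^*,g] = -[r_i^*,f]b+[r_i^*,a]g$, which holds thanks to $fb=ag$, and your treatment of the ``if'' direction (given rational coefficients) and of $D(\bm{s})\cap L^\infty(\bm{s})\subset C_{\mathrm{div}}(\bm{s})$ via the Leibniz rule and injectivity of $f$ is fine and close in spirit to the paper's Lemma 4.4 and Remark 4.8. But the two places you flag as ``delicate'' are precisely where the argument, as outlined, does not close, and the paper's strategy for getting around both is worth internalizing because it sidesteps the separation problem entirely.

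First, your induction for representability asserts that sums and products can be handled ``by producing common denominators within $C_{\mathrm{div}}(\bm{s})$ using the Ore structure of $\widetilde{L^\infty(\bm{s})}$ over $L^\infty(\bm{s})$.'' That Ore structure only guarantees witnesses in $L^\infty(\bm{s})$: given $f_1,f_2$, one writes $f_1f_2^{-1}=x^{-1}y$ with $x,y\in L^\infty(\bm{s})$, and there is no a priori reason such $x,y$ are rational. Unless you prove an Ore property internal to $C_{\mathrm{div}}(\bm{s})$ (which is not available off the shelf), the induction does not produce a representation with all four entries rational. Second, for the ``only if'' direction you propose to isolate finite rank of $[r_i^*,f]$ and $[r_i^*,a]$ by varying a polynomial multiplier $p$, but you do not say how the variation separates the two contributions, and there is no visible mechanism for it: the hypothesis gives you finite rank of a single combination $-[r_i^*,f]b+[r_i^*,a]g$ for one specific $(f,a,b,g)$, and multiplying by polynomials just reshuffles that combination.

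The paper avoids both issues by never trying to deduce rationality of an arbitrary $f,a$ from the hypothesis. Instead it works with the set $R(\bm{s})$ of $u$'s for which some representation makes $\{fr_i^*b-ar_i^*g\}_i$ finite rank, and proves three structural facts: (i) the condition is representation-independent, i.e.\ $R'(\bm{s})=R(\bm{s})$ (Lemma 4.4, using only the $L^\infty(\bm{s})$-Ore structure together with the nonzero-divisor Lemmas 4.2 and 4.3); (ii) $R(\bm{s})$ is a unital $*$-subalgebra of $\widetilde{L^\infty(\bm{s})}$ closed under inversion (Lemmas 4.6--4.9), so $D(\bm{s})\subset R(\bm{s})$; (iii) $R(\bm{s})\cap L^\infty(\bm{s})=C_{\mathrm{div}}(\bm{s})$ via Theorem 3.1 (Remark 4.5). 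The key move that replaces your ``separation'' is then the resolvent representation: for $u\in R(\bm{s})$, the operators $f=(1+uu^*)^{-1}$ and $a=(1+uu^*)^{-1}u$ automatically lie in $R(\bm{s})\cap L^\infty(\bm{s})=C_{\mathrm{div}}(\bm{s})$ by the algebraic closure properties just established, and $u=f^{-1}a$ exhibits $u\in D(\bm{s})$. This simultaneously yields the ``only if'' direction and the representability statement for free, with no need to chase common denominators inside $C_{\mathrm{div}}(\bm{s})$. If you want to salvage your plan, the piece you would have to supply is precisely the representation-independence lemma and the four algebra-closure lemmas; once those are in place your identity and Leibniz manipulations become the supporting computations rather than the main engine.
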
  
In the last few years, rational quotients have been the object of studies in terms of free probability theory. In \cite{MSY19}, T. Mai, R. Speicher, and S. Yin found equivalent conditions for a tuple of operators so that one can evaluate any noncommutative rational functions in them. It is interesting to note that this condition is related to Voiculescu's (non-microstates) free entropy dimension and some quantity introduced by A. Connes and D. Shlyakhtenko \cite{CS05} in the context of their $L^2$-homology for von Neumann algebras. These studies have also been applied to random matrix models obtained from noncommutative rational functions in \cite{CMMPY21}. In \cite{ACSY21}, one can see how to estimate atoms of operators obtained from noncommutative rational functions evaluated in a free tuple of normal operators with the prescribed atoms.
 Our results show additional evidence that tools in free probability ought to play a natural role when we study rationality in a noncommutative setting.

This paper consists of four sections including the introduction. We explain the basics of the full Fock space and noncommutative rational series in Section \ref{Preliminaries}. Then we prove our main result in Section \ref{Proof of main theorem}.  In Section \ref{Rationality criterion for affiliated operators}, we extend this result to affiliated operators like in Linnell's work.

\section*{Acknowledgement}
This paper was completed during the Ph.D. studies of the author at Kyoto University under the supervision of B. Collins, who gave a talk about our paper \cite{CMMPY21} and received comments from A. Connes and R. Speicher on this occasion. Their feedback was the starting point of this paper. The author wants to thank them both for sharing their thoughts and references. 
The author also thanks T. Mai and S. Yin for reading a preliminary version of this manuscript and for helpful comments.
The author is grateful to an anonymous referee for useful comments and suggestions, which lead so substantial improvements in readability and clarity.

A. Miyagawa was supported by JST, the establishment of university fellowships towards the creation of science technology innovation, Grant Number JPMJFS2123 and by JSPS Research Fellowships for Young Scientists, JSPS KAKENHI Grant Number JP 22J12186.
      
\section{Preliminaries}\label{Preliminaries}
 \subsection{Full Fock space}
In the beginning of this section, we introduce free semicircular elements which are represented on the full Fock space. Let $d$ be a positive integer and $H$ be a complex Hilbert space with the dimension $d$ and an inner product $\langle \cdot , \cdot \rangle_{H}$. Then we consider the full Fock space as an orthogonal sum of Hilbert spaces $H^{\otimes n}$,
$$\mathcal{F}(H) = \bigoplus_{n=0}^{\infty} H^{\otimes n}$$
 where $H^{\otimes 0} = \C \Omega$ with a unit vector $\Omega$. $\langle \cdot, \cdot \rangle_{\mathcal{F}(H)}$ denotes the inner product on $\mathcal{F}(H)$. Note that $\langle \cdot, \cdot \rangle_{\mathcal{F}(H)}$ satisfies 
$$\langle \xi_1 \otimes \xi_2 \otimes \cdots \otimes \xi_m,  \eta_1 \otimes \eta_2 \otimes \cdots \otimes \eta_n \rangle_{\mathcal{F}(H)} = \delta^m_n \prod_{i=1}^m \langle \xi_i , \eta_i \rangle_H $$  
for $m,n \in \N$ and $\xi_i, \eta_i \in H$ where $\delta^m_n$ is the Kronecker's delta.
We consider a set of letters $[d] = \{1,\ldots,d\}$ and $[d]^*$ be the set of words which is the free monoid generated by $[d]$ with the empty word $\Omega$ (i.e. the identity in $[d]^*$). We denote by $|v|$ the length of a word $v \in [d]^*$. 

We associate words in $[d]^*$ with an orthonormal basis of $\mathcal{F}(H)$. Let $\{e_i\}_{i=1}^{d}$ be an orthonormal basis of $H$ and we define $e_v = e_{v_1} \otimes e_{v_2} \otimes \cdots \otimes e_{v_n}$ for $v=v_1v_2\cdots v_n \in [d]^*$ and $e_{\Omega} = \Omega$. Then $\{e_v\}_{v \in [d]^*}$ is an orthonormal basis of $\mathcal{F}(H)$. 

Let $B(\mathcal{F}(H))$ denote the set of bounded operators on $\mathcal{F}(H)$. Now we introduce our main objects. 
\begin{dfn}
For $f \in H$, we define the \emph{left creation operator} $l(f) \in B(\mathcal{F}(H))$ by 
$$l (f)e_v =  f \otimes e_v,$$
and we also call its adjoint operator $l(f)^* \in B(\mathcal{F}(H))$ the \emph{left annihilation operator}.  

In addition, we define the \emph{right creation operator} $r(f) \in B(\mathcal{F}(H))$ by
$$r(f) e_v = e_v \otimes f,$$
and the \emph{right annihilation operator} by $r(f)^*$. 
\end{dfn}
Note that $l(f)^*$ satisfies $l(f)^*e_v = \langle e_{v_1},f\rangle_{H} e_{v_2\cdots v_n}$ and $r(f)^* e_v = \langle e_{v_n} , f \rangle_H e_{v_1\cdots v_{n-1}}$ for $v=v_1v_2\cdots v_n \in [d]^*.$ 
Throughout this paper, we put $l_i = l(e_i)$ and $r_i = r(e_i)$ and $s_i = l_i + l_i^*$ for each $ i \in [d].$ 

Let $\C\langle \bm{s} \rangle$ denote the $\ast$-algebra of noncommutative polynomials in $\bm{s} =(s_1,\ldots,s_d)$; note that elements of $\C\langle \bm{s} \rangle$ can be written as 
$$\sum_{\substack{v \in [d]^* \\ |v|\le N}} \alpha_v s^v$$
where $N \in \Z_{\ge 0}, \ \alpha_v \in \C$ and $s^v = s_{v_1} \cdots s_{v_n}$ for any $v= v_1v_2\cdots v_n \in [d]^*.$
 
We define $L^{\infty}(\bm{s})$ as the von Neumann subalgebra of $B(\mathcal{F}(H))$ generated by $\bm{s}=(s_1,\ldots,s_d)$. In other words, $L^{\infty}(\bm{s})$ is the closure of $\C\langle \bm{s} \rangle$ in strong operator topology. 

We remark that $\Omega$ is cyclic and separating for $L^{\infty}(\bm{s})$, i.e. $\overline{L^{\infty}(\bm{s})\Omega} = \mathcal{F}(H)$, and if $X \in L^{\infty}(\bm{s})$ satisfies $X \Omega = 0$, then we have $X =0$.  

We define the vacuum state $\tau_{\Omega} $ on $L^{\infty}(\bm{s})$ by
$$\tau_{\Omega}(\cdot) = \langle \cdot \ \Omega, \Omega \rangle_{\mathcal{F}(H)}.$$
Then it is known that $\tau_{\Omega}$ is a tracial state for $L^{\infty}(\bm{s})$, i.e. $\tau_{\Omega}(X Y) = \tau_{\Omega}(YX)$ for $X,Y \in L^{\infty}(\bm{s})$.

Moreover, $\{s_i\}_{i=1}^d$ have free semicircle distributions with respect to $\tau_{\Omega}$. For these properties of $\Omega$ and $\tau_{\Omega}$, we refer to \cite[Theorem 2.6.2]{VDN92} or \cite[Corollary 7.17 and Proposition 7.18]{NS06}.

Let $L^2(\bm{s})$ be the Hilbert space obtained from $L^{\infty}(\bm{s})$ by completion with respect to the inner product $\langle x , y \rangle_2 = \tau_{\Omega}(y^* x), \ x,y \in L^{\infty}(\bm{s})$. Since the map 
\begin{eqnarray*}
L^{\infty}(\bm{s})  &\to& \mathcal{F}(H) \\
                              X & \mapsto & \hat{X} = X\Omega 
\end{eqnarray*}
is an isometry and $\Omega$ is cyclic (i.e. $\mathcal{F}(H)=\overline{L^{\infty}(\bm{s})\Omega}$), this map can be extend to a unitary operator from $L^2(\bm{s})$ to $\mathcal{F}(H).$     

Indeed, via the map $X \mapsto \hat{X}$, we can represent an orthonormal basis $\{e_v\}_{v \in [d]^*}$ of $\mathcal{F}(H)$ as elements in $L^{\infty}(\bm{s})$ by using the \emph{Chebyshev polynomials of the second kind}. Recall that the Chebyshev polynomials $U_n(X) \in \C[X]$ of the second kind are defined by the following recursion;
$$ U_{-1}(X) = 0,\ U_0(X) = 1,\ U_{n+1}(X) = X U_{n}(X) - U_{n-1}(X).$$  
For $w = i_1^{k_1}i_2^{k_2}\cdots i_n^{k_n}$ (where $i_1 \neq i_2 \neq \cdots \neq i_n$), we define an element $U_w$ in $L^{\infty}(\bm{s})$ by
$$U_{w} = U_{k_1}(s_{i_1})U_{k_2}(s_{i_2}) \cdots U_{k_n}(s_{i_n})$$
and also define $U_{\Omega} = 1$. Then $U_w$ is defined for any $w \in [d]^*$.

We can see by induction on the word length that for any $v \in [d]^*$, 
 $$\hat{U}_v = e_v.$$ 
This equality is also remarked in \cite[Section 5.1]{BS98}.
Since our main result focuses on noncommutative polynomials over $\bm{s} = (s_1,\ldots,s_d)$, we will use $\hat{U}_v$ rather than $e_v$. 

In order to represent annihilation operators $l_i^*, r_i^*$  in terms of word translations, we introduce the following operations which are also introduced in \cite[Chapter 1]{BR10}.  
\begin{dfn}
Let $0$ be a new letter. For $v \in [d]^* \sqcup \{0\}$ and $w \in [d]^*$, we define 
$$v w^{-1} = \begin{cases} v' \ \mathrm{if} \ v = v' w, \ v' \in [d]^* \\ 0 \ \mathrm{otherwise}\end{cases} $$
and also define
$$w^{-1} v = \begin{cases} v' \ \mathrm{if} \ v = w v', \ v' \in [d]^* \\ 0 \ \mathrm{otherwise.}\end{cases} $$
\end{dfn}
Put $U_0 = 0$. One should be careful that we use the same notation $U_0$ for $U_0=0$ and $U_0(X)=1$ ($U_0(X)$ corresponds with $U_{\Omega}$ in our definition). Then by using above notations we have for each $i \in [d]$
$$l^*_i(\hat{U}_v) = \hat{U}_{i^{-1} v} \quad \mathrm{and} \quad r^*_i(\hat{U}_v) = \hat{U}_{v i^{-1}}, \ v \in [d]^*.$$ 
\subsection{Noncommutative rational series}
In the sequel, we explain about noncommutative rational series.   
First, we give two definitions of rationality in a setting of unital algebras (over $\C$) as follows (see \cite[Definition 6]{BR10} or \cite[Definition 4.6 and 4.8]{MSY19}).
\begin{dfn}\label{rationality}
Let $\mathcal{A}$ be a unital algebra and $\mathcal{B} \subset \mathcal{A}$ be a unital subalgebra of $\mathcal{A}.$
We define the \emph{division closure} of $\mathcal{B}$ in $\mathcal{A}$ as the smallest unital subalgebra $\mathcal{C}$ of $\mathcal{A}$ such that $\mathcal{C}$ contains $\mathcal{B}$ and satisfies 
$$x \in \mathcal{C} \ \mathrm{is \ invertible \ in} \ \mathcal{A} \implies x^{-1} \in \mathcal{C}.$$
 In addtion, we define the \emph{rational closure} of $\mathcal{B}$ in $\mathcal{A}$ as the smallest (unital) subalgebra $D$ of $\mathcal{A}$ such that $\mathcal{D}$ contains $\mathcal{B}$ and satisfies for any $n \in \N,$
$$ X \in M_n(\mathcal{D}) \ \mathrm{is \ invertible \ in }\ M_n(\mathcal{A}) \implies X^{-1} \in M_n(\mathcal{D}).$$    
\end{dfn}
 Obviously, the division closure of any subalgebra is always contained in the rational closure of the same subalgebra, however, the converse is not necessarily true (Exercise 7.1.3 in \cite{cohn06}).

We will use facts for noncommutative rational series specific to our setting. The proofs of these results can be found in \cite[Chapter 1]{BR10}. 
We consider the algebra $\C \langle\langle [d] \rangle\rangle$ of \emph{noncommutative formal power series} with formal (noncommutative) variables $\{X_i\}_{i \in [d]}$ like as $\sum_{v \in [d]^*}\alpha_{v} X^v$ where $X^v = X_{v_1}X_{v_2}\cdots X_{v_n}$ for $v = v_1v_2\cdots v_n \in [d]^*$. Let $\C \langle [d] \rangle$ denote the subalgebra of noncommutative polynomials.

\begin{dfn}\label{recognizable}
Let $Z = \sum_{v \in [d]^*}\alpha_{v} X^v \in  \C \langle\langle [d] \rangle\rangle$. We say $Z$ is \emph{recognizable} if there exists $m \in \N$ and a \emph{linear representation} $(\lambda,\mu,\gamma)$ of dimension $m$ which consists of  a multiplicative map $\mu : [d]^* \to M_m(\C)$ (i.e. $\mu(v w) = \mu(v) \mu(w)$ for any $v,w \in [d]^*$) and $\lambda,\gamma \in \C^m$ such that for any $v \in [d]^*$
$$ \alpha_v = {}^t \lambda \mu(v) \gamma. $$
\end{dfn}
Let us say $Z$ is \emph{rational} if $Z$ belong to the division closure of $\C \langle [d] \rangle $ in $\C \langle\langle [d] \rangle\rangle$. Then the following theorem, known as the fundamental theorem, is crucial in this paper. This result is a collection of several works by Fliess, Jacobi, Kleene, and Sch\"{u}tzenberger. 
\begin{thm}[Corollary 1.5.4 and Theorem 1.7.1 in \cite{BR10}]\label{fundamental_theorem}
 Let $Z = \sum_{v \in [d]^*}\alpha_{v} X^v \in \C \langle\langle [d] \rangle\rangle$. Then the following are equivalent.
\begin{enumerate}
\item A $\C$-vector subspace of $\C \langle\langle [d] \rangle\rangle$ generated by $\sum_{v \in [d]^*}\alpha_{v} X^{v w^{-1}}$ ($w \in [d]^*$) is finitely generated. \\
\item A $\C$-vector subspace of $\C \langle\langle [d] \rangle\rangle$ generated by $\sum_{v \in [d]^*}\alpha_{v} X^{w^{-1}v}$ ($w \in [d]^*$) is finitely generated. \\
\item $Z$ is recognizable.\\
\item $Z$ is rational.
 \end{enumerate}
 \end{thm}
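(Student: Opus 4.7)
The plan is to establish the cycle by proving the triangle (1) $\Leftrightarrow$ (3) $\Leftrightarrow$ (2) via finite-dimensional shift spaces, and then separately proving (3) $\Leftrightarrow$ (4). First I would observe that if $\alpha_v = {}^t\lambda\,\mu(v)\,\gamma$ for a linear representation of dimension $m$, then multiplicativity of $\mu$ gives
\[
\sum_{v \in [d]^*} \alpha_v X^{vw^{-1}} \;=\; \sum_{v' \in [d]^*} {}^t\lambda\,\mu(v')\,\bigl(\mu(w)\gamma\bigr)\,X^{v'},
\]
so every right shift of $Z$ is recognizable with the same $\lambda$ and $\mu$ but with final vector $\mu(w)\gamma \in \C^m$; thus all right shifts lie in a subspace of $\C\langle\langle [d]\rangle\rangle$ of dimension at most $m$. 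The symmetric computation handles left shifts and yields (3) $\Rightarrow$ (2).

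Conversely, for (1) $\Rightarrow$ (3), I would build a representation from the shift space itself. Let $V$ be the finite-dimensional $\C$-span of the right shifts of $Z$, with basis $Z_1,\dots,Z_m$. The shift by a single letter maps $V$ into $V$ (since a shift of a shift is a shift), so each letter $i \in [d]$ defines a matrix $\mu(i) \in M_m(\C)$, and extending multiplicatively gives $\mu: [d]^* \to M_m(\C)$. Taking $\gamma_j$ to be the constant coefficient of $Z_j$ and $\lambda$ to be the coordinates of $Z$ in the chosen basis, the constant coefficient of the right shift of $Z$ by $v$ equals ${}^t\lambda\,\mu(v)\,\gamma$, which must then equal $\alpha_v$. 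The implication (2) $\Rightarrow$ (3) is completely analogous.

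For (3) $\Rightarrow$ (4), I would express $Z$ as an entry of a matrix inverse. With $M = \sum_{i=1}^d \mu(i)X_i \in M_m(\C\langle [d]\rangle)$, the formal geometric series $\sum_n M^n$ converges coefficient-by-coefficient in $M_m(\C\langle\langle [d]\rangle\rangle)$, giving $Z = {}^t\lambda\,(I_m - M)^{-1}\,\gamma$. Since $\C\langle\langle [d]\rangle\rangle$ is a local ring (a series is invertible iff its constant term is nonzero), the rational and division closures of $\C\langle [d]\rangle$ in $\C\langle\langle [d]\rangle\rangle$ coincide, and an iterated Schur-complement inversion expresses each entry of $(I_m - M)^{-1}$ using only additions, multiplications and inversions of series with nonzero constant term. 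For (4) $\Rightarrow$ (3), I would check that the recognizable series form a unital subalgebra of $\C\langle\langle [d]\rangle\rangle$ closed under formal inversion at invertible elements: closure under sums is by block-diagonal representations; closure under products by block-upper-triangular representations with coupling block $\gamma_1\,{}^t\lambda_2$; and closure under inversion of a series with nonzero constant term by a direct block construction that manufactures a representation of $Z^{-1}$ from one of $Z$.

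The step I expect to be the main obstacle is (3) $\Rightarrow$ (4): producing a division-closure expression (as opposed to merely a rational-closure one) for each entry of $(I_m - M)^{-1}$. The resolvent formula itself is transparent, but converting it into an explicit composition of additions, multiplications, and \emph{invertibility-preserving} inversions inside $\C\langle\langle [d]\rangle\rangle$ requires careful use of the local structure of the formal power series ring, most cleanly done by a Schur-complement recursion on the matrix size, tracking that each intermediate scalar has nonzero constant term.
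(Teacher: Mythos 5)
The paper does not prove this theorem; it is quoted directly from Berstel--Reutenauer (Corollary~1.5.4 and Theorem~1.7.1 of \cite{BR10}) as background. Your outline is essentially the standard Kleene--Sch\"utzenberger argument from that reference --- stable finite-dimensional shift spaces giving $(1)\Leftrightarrow(3)\Leftrightarrow(2)$, a resolvent expansion of $(I-\sum_i\mu(i)X_i)^{-1}$ plus Schur complements in the local ring $\C\langle\langle[d]\rangle\rangle$ for $(3)\Rightarrow(4)$, and block constructions together with the ``star'' of a proper series for $(4)\Rightarrow(3)$ --- so it aligns with the cited source; the only caveat is a bookkeeping one in $(1)\Rightarrow(3)$, where the natural choice of $\lambda$ as coordinates of $Z$ and $\gamma$ as constant coefficients actually yields $\alpha_v={}^t\gamma\,\mu(v)\,\lambda$ rather than ${}^t\lambda\,\mu(v)\,\gamma$, so one should either swap the roles of $\lambda,\gamma$ or transpose the convention for the matrices $\mu(i)$.
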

Moreover, if a noncommutative formal power series is recognizable and its linear representation $(\lambda, \mu, \gamma)$ has the minimal dimension, then $\mu$ is determined by its coefficients. This can be stated as follows.  
\begin{thm}[Corollary 2.2.3 in \cite{BR10}]\label{minimal_representation}
Suppose $Z = \sum_{v \in [d]^*}\alpha_{v} X^v \in \C \langle\langle [d] \rangle\rangle$ is recognizable with a linear representation $(\lambda,\mu,\gamma)$ which has the minimal dimension $m$. Then there exist $\{u_k\}_{k=1}^K,\{w_l\}_{l=1}^L \subset [d]^*$ and $c_{i j}^{k l} \in \C$ such that for any $v \in [d]^*$ and $1 \le i,j \le m$ 
$$\mu(v)_{i j} = \sum_{k l} c_{i j}^{k l} \alpha_{u_k v w_l}.$$ 
\end{thm}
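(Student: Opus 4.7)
The plan is to exploit the minimality of $(\lambda,\mu,\gamma)$ by showing that the row vectors $L_u := {}^t\lambda\,\mu(u) \in \C^m$ and the column vectors $G_w := \mu(w)\gamma \in \C^m$ span all of $\C^m$ as $u,w$ range over $[d]^*$. For the row case, the identity $L_{uv} = L_u\,\mu(v)$ shows that $V := \mathrm{span}\{L_u : u \in [d]^*\}$ is right-invariant under every $\mu(v)$ and contains ${}^t\lambda = L_\Omega$; dually, its annihilator $V^\perp \subset \C^m$ is left-invariant under every $\mu(v)$, so $\mu$ descends to the quotient $\C^m/V^\perp$, yielding a linear representation of $Z$ of dimension $\dim V$. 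Minimality then forces $\dim V = m$. An entirely symmetric argument handles the $G_w$.

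Having established the spanning property, I would pick words $u_1,\dots,u_m$ and $w_1,\dots,w_m$ so that $L_{u_1},\dots,L_{u_m}$ and $G_{w_1},\dots,G_{w_m}$ are linearly independent, and assemble them into invertible $m\times m$ matrices $\Lambda$ (whose $k$-th row is $L_{u_k}$) and $\Gamma$ (whose $l$-th column is $G_{w_l}$). The defining identity then reads
$$\alpha_{u_k v w_l} = {}^t\lambda\,\mu(u_k)\,\mu(v)\,\mu(w_l)\,\gamma = L_{u_k}\,\mu(v)\,G_{w_l} = (\Lambda\,\mu(v)\,\Gamma)_{k l}$$
for every $v \in [d]^*$, so that $\mu(v) = \Lambda^{-1}\bigl(\alpha_{u_k v w_l}\bigr)_{k,l=1}^m\,\Gamma^{-1}$. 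Reading off the $(i,j)$ entry gives exactly the required formula, with $K = L = m$ and the constants $c_{ij}^{kl} := (\Lambda^{-1})_{ik}(\Gamma^{-1})_{lj}$.

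The main obstacle is the reduction step in the first paragraph, where one must actually verify that a proper $\mu$-stable subspace $V \subsetneq (\C^m)^*$ gives rise to a strictly smaller linear representation of the \emph{same} series. The delicate points are checking that $\mu$ descends well to $\C^m/V^\perp$, that $\gamma$ has a well-defined image, and that the descended triple still reproduces $\alpha_v$ for every $v \in [d]^*$; all three rely on the combination of invariance of $V^\perp$ under each $\mu(v)$ and the containment ${}^t\lambda \in V$. Once this minimality-to-spanning principle is secured, the rest is a routine matter of extracting bases from spanning sets and inverting two $m\times m$ matrices.
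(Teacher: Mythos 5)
Your proof is correct. The paper itself does not prove this statement but cites it directly from Berstel--Reutenauer (Corollary 2.2.3), and your argument reconstructs the standard one from that reference: minimality of the representation is equivalent to the ``reachability and observability'' condition that $\{{}^t\lambda\,\mu(u)\}_u$ and $\{\mu(w)\gamma\}_w$ each span $\C^m$, which one establishes exactly by the quotient-representation reduction you sketch; the closing linear-algebra step (building $\Lambda$, $\Gamma$ from chosen bases and inverting) is then routine and gives the stated constants $c_{ij}^{kl} = (\Lambda^{-1})_{ik}(\Gamma^{-1})_{lj}$. The only small point worth spelling out in a full writeup is the verification, which you flagged yourself, that ${}^t\lambda$ vanishes on $V^\perp$ (because ${}^t\lambda = L_\Omega \in V$) so that it descends to the quotient and the descended triple still satisfies $\bar\lambda\,\bar\mu(v)\,\bar\gamma = \alpha_v$; once written down that check is immediate.
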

In addition, we use an operation between noncommutative formal power series.
For $Z_1=\sum_{v \in [d]^*}\alpha_{v} X^v ,Z_2=\sum_{v \in [d]^*}\beta_{v} X^v \in \C \langle\langle [d] \rangle\rangle$, we define the \emph{Hadamard product} $Z_1 \odot Z_2$ by
$$Z_1 \odot Z_2 = \sum_{v \in [d]^*}\alpha_{v}\beta_{v} X^v.$$ 
One of the connections between the Hadamard product and rationality can be stated as follows.
\begin{thm}[Theorem 1.5.5 in \cite{BR10}]\label{Hadamard_product}
If $Z_1,Z_2 \in \C \langle\langle [d] \rangle\rangle$ are rational, then $Z_1 \odot Z_2$ is also rational.
\end{thm}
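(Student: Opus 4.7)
The plan is to reduce to the matrix picture via Theorem \ref{fundamental_theorem} and then run a tensor-product construction at the level of linear representations. By that theorem, rationality is equivalent to recognizability, so it suffices to check that the class of recognizable series is closed under the Hadamard product. Since the Hadamard product multiplies coefficients pointwise, the natural guess is to tensor the given linear representations: scalar multiplication on the output should come from the Kronecker product on matrices.

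Concretely, fix linear representations $(\lambda_i, \mu_i, \gamma_i)$ of $Z_i$ of dimension $m_i$, with $\alpha_v = {}^t\lambda_1 \mu_1(v) \gamma_1$ and $\beta_v = {}^t\lambda_2 \mu_2(v) \gamma_2$ for all $v \in [d]^*$. I would set
\[
\lambda := \lambda_1 \otimes \lambda_2 \in \C^{m_1 m_2}, \quad \gamma := \gamma_1 \otimes \gamma_2 \in \C^{m_1 m_2}, \quad \mu(v) := \mu_1(v) \otimes \mu_2(v) \in M_{m_1 m_2}(\C).
\]
The one point requiring a short check is that $\mu$ is multiplicative; this follows from the mixed-product identity $(A \otimes B)(C \otimes D) = AC \otimes BD$, which gives
\[
\mu(vw) = \mu_1(v)\mu_1(w) \otimes \mu_2(v)\mu_2(w) = \bigl(\mu_1(v) \otimes \mu_2(v)\bigr)\bigl(\mu_1(w) \otimes \mu_2(w)\bigr) = \mu(v)\mu(w).
\]
Applying the same identity to $\lambda$, $\mu(v)$, $\gamma$ yields ${}^t\lambda \mu(v) \gamma = ({}^t\lambda_1 \mu_1(v)\gamma_1)({}^t\lambda_2 \mu_2(v)\gamma_2) = \alpha_v \beta_v$, so $(\lambda, \mu, \gamma)$ is a linear representation of $Z_1 \odot Z_2$ of dimension $m_1 m_2$. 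Invoking the $(3) \Rightarrow (4)$ direction of Theorem \ref{fundamental_theorem} finishes the argument.

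I do not expect any real obstacle here: the proof rests entirely on the closure of matrix-valued multiplicative maps on $[d]^*$ under Kronecker product, which is essentially immediate from the mixed-product property. The only mild care required is bookkeeping with the transpose and tensor conventions, to make sure the scalar ${}^t\lambda \mu(v) \gamma$ genuinely factors as a product of the two scalars ${}^t\lambda_i \mu_i(v) \gamma_i$ rather than cross-contaminating indices across the two factors.
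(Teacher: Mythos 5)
Your proof is correct. Note, however, that the paper does not prove this statement at all: it is quoted verbatim as Theorem 1.5.5 of Berstel--Reutenauer \cite{BR10} and used as a black box. Your argument via recognizability and the Kronecker product of linear representations, with the mixed-product identity $(A\otimes B)(C\otimes D)=AC\otimes BD$ giving both multiplicativity of $\mu=\mu_1\otimes\mu_2$ and the coefficient identity ${}^t\lambda\,\mu(v)\,\gamma=\alpha_v\beta_v$, is in fact the standard proof found in that reference, so you have reproduced the argument the paper delegates to the citation.
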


\subsection{Kronecker's theorem}\label{kronecker's theorem}

We need to recall Kronecker's theorem which basically tells us the equivalence between bounded rational functions and finite rank Hankel operators. 

Let $\{\alpha_n\}_{n=0}^{\infty}\subset \C$. We call a bounded operator $H$ on $l^2(\Z_{\ge 0})$ the \emph{Hankel operator} with respect to $ \{\alpha_n\}_{n=0}^{\infty}$ if $H$ satisfies
$$\langle H e_m,e_n \rangle = \alpha_{m+n}$$
for any $m,n \in \Z_{\ge 0}$ where $\{e_m\}_{m=0}^{\infty}$ is the standard orthonormal basis of $l^2(\Z_{\ge 0})$. The following theorem is known as Kronecker's theorem for the studies of Hankel operators (see \cite{Kro1881} and \cite[Theorem 3.11]{Partington88}).
\begin{thm}\label{Kronecker}
Let $\{\alpha_n\}_{n=0}^{\infty} \subset \C$. Then a formal Laurent series (in $z^{-1}$) $a(z) = \sum_{n=0}^{\infty}\alpha_n z^{-n-1}$ is a rational function (i.e. $a(z)= \frac{P(z)}{Q(z)}$ for some polynomials $P(z),Q(z)$) such that all poles of $a(z)$ are contained in $\{z \in \C \ | \ |z| <1\}$ 
if and only if $\{\alpha_n\}_{n=0}^{\infty}$ determines a finite rank Hankel operator. In this case, the number of poles on $f$ is equal to the rank of the Hankel operator.    
\end{thm}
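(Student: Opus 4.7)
The plan is to translate both rationality of $a(z)$ and finiteness of the rank of $H$ into the same algebraic condition: that the coefficient sequence $\{\alpha_n\}$ satisfies a linear recurrence with constant coefficients. Rationality of $a(z)=\sum_n \alpha_n z^{-n-1}$ with all poles in $\{|z|<1\}$, via partial fraction decomposition, is equivalent to a representation $\alpha_n = \sum_i P_i(n)\lambda_i^n$ with $P_i \in \C[X]$ and $|\lambda_i|<1$. On the operator side, $\mathrm{rank}(H)$ equals the dimension of the linear span of its columns $\{He_m=(\alpha_{m+n})_{n\ge 0}\}_{m\ge 0}$, so finiteness of the rank encodes a shift-invariance condition on $(\alpha_n)_n$ that is exactly a linear recurrence.

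For the implication \emph{rational implies finite rank}, I would start from $\alpha_n = \sum_i P_i(n)\lambda_i^n$ (equivalently $a(z)=P(z)/Q(z)$ with $\deg P < \deg Q$, which is forced by $a(z)$ being a series in $z^{-1}$ vanishing at infinity) and expand each $P_i(m+n)$ in $m$. This writes $H_{m,n}=\sum_i \lambda_i^{m}\lambda_i^{n}P_i(m+n)$ as a finite sum of products $f_k(m)g_k(n)$, which factors $H$ through a finite-dimensional space and yields $\mathrm{rank}(H)\le \deg Q$.

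For the converse, finite rank $r$ of $H$ forces the columns $He_0,He_1,\ldots,He_r$ to be linearly dependent in $l^2(\Z_{\ge 0})$, which is precisely a linear recurrence $\sum_{k=0}^r c_k\alpha_{m+k}=0$ valid for all $m\ge 0$ with $c_r\ne 0$. The standard theory of constant-coefficient recurrences then produces $\alpha_n=\sum_i P_i(n)\lambda_i^n$ where $\lambda_i$ are the roots of $Q(z)=\sum_k c_k z^k$; resumming the corresponding geometric-type series exhibits $a(z)$ as a rational function with denominator $Q$. The condition $|\lambda_i|<1$ then follows from the fact that boundedness of $H$ already implies $\|He_0\|_2^2=\sum_n|\alpha_n|^2<\infty$, so $\alpha_n\to 0$; this is incompatible with any surviving contribution $P_i(n)\lambda_i^n$ having $|\lambda_i|\ge 1$.

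The main obstacle is the sharp equality $\mathrm{rank}(H)=\deg Q$. The inequality $\le$ is immediate from the factorization in the forward direction; to upgrade to equality I would take the recurrence provided by the converse to have minimal length, collect the distinct roots $\lambda_i$ with multiplicities $k_i$, and verify that the sequences $(n^j\lambda_i^n)_{n\ge 0}$ for $0\le j<k_i$ are $\C$-linearly independent in $l^2(\Z_{\ge 0})$ by a Vandermonde-type determinant computation exploiting the distinctness of the $\lambda_i$ and the fact that $|\lambda_i|<1$ ensures these vectors actually lie in $l^2$. This identifies the column space with a space of dimension $\sum_i k_i=\deg Q$, matching the total number of poles counted with multiplicity.
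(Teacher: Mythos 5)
The paper does not prove Theorem \ref{Kronecker}: it is quoted as a classical result of Kronecker \cite{Kro1881}, with the modern treatment delegated to Partington \cite[Theorem 3.11]{Partington88}, and the surrounding text only extracts from those sources the linear recurrence $\sum_{k=0}^m \lambda_k \alpha_{n+k}=0$ and the consequent geometric decay used in Corollary \ref{estimation of coefficients}. Your argument is a correct, self-contained version of the standard proof, and its central device---translating both rationality of $a(z)$ and finite rank of $H$ into one constant-coefficient linear recurrence for $(\alpha_n)$---is exactly what the paper points to.

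There is, however, a genuine misstep in your treatment of the equality $\mathrm{rank}(H)=\deg Q$. You propose to verify by a Vandermonde determinant that the vectors $(n^j\lambda_i^n)_{n\ge 0}$, $0\le j<k_i$, are linearly independent, and then claim that this ``identifies the column space'' with their span. It does not: the Vandermonde computation shows that the ambient space in which all columns live has dimension $\sum_i k_i=\deg Q$, which, combined with $\alpha_{m+n}=\sum_i P_i(m+n)\lambda_i^{m+n}$, re-derives the upper bound $\mathrm{rank}(H)\le\deg Q$ but says nothing about the columns actually exhausting that space. The lower bound is simpler and bypasses the Vandermonde altogether: if $\sum_{k=0}^r c_k\alpha_{m+k}=0$ with $c_r\ne 0$ is the \emph{shortest} recurrence satisfied by $(\alpha_n)$, then any nontrivial relation $\sum_{k=0}^{r-1}d_k\,He_k=0$ would read off coordinate-wise a strictly shorter recurrence, contradicting minimality; hence $He_0,\dots,He_{r-1}$ are linearly independent and $\mathrm{rank}(H)\ge r=\deg Q$. (The matching inequality $\mathrm{rank}(H)\le r$ comes from the recurrence together with the observation that $He_{m+1}$ is the backward shift of $He_m$, so by induction every column lies in the span of $He_0,\dots,He_{r-1}$.) You should also record the edge case $\lambda_i=0$, which occurs when $Q(0)=0$: its contribution to $\alpha_n$ is a finitely supported sequence of Kronecker deltas rather than a literal $P_i(n)\lambda_i^n$, though nothing downstream changes.
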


Here, we explain a related recursion and estimate in Theorem \ref{Kronecker} in order to explain Corollary \ref{estimation of coefficients}, which we will use in the proof of Corollary \ref{convergence}.  
Indeed, if $a(z) = \sum_{n=0}^{\infty}\alpha_n z^{-n-1}$ is rational and the denominator of $a(z)$ written as $Q(z)=\sum_{k=0}^m \lambda_k z^k$ ($\lambda_m \neq 0$), 
then we have the following recursion for $\{\alpha_n\}_{n=0}^{\infty}$
$$\sum_{k =0}^m \lambda_k \alpha_{n+k} = 0,$$
where $\{\alpha_n\}_{n=0}^{m-1}$ are determined by the numerator of $a(z)$. 
This recursion is characterized by the poles of $a(z)$, and if we additionally assume $\lim_{n \to \infty}\alpha_n =0$, we can see that all poles of $a(z)$ are contained in $\{z \in \C \ | \ |z| <1\}$ (see the proof of \cite[Theorem 3.11]{Partington88}).  
Moreover, this implies $|\alpha_n|$ is bounded above by $M c^n$ where $M > 0$ and $c = \max \{|p| \ | \ p \mathrm{\ is \ a \ pole \ of} \ a(z)\}.$ 

By replacing $a(z)$ by $z a(z^{-1})$, we obtain the following estimate from the above observation, which is used in the proof of \cite[Lemma 10]{DR97}.         
\begin{cor}\label{estimation of coefficients}
Let $a(z)= \sum_{n=0}^{\infty} \alpha_n z^n$ be a formal power series with $\sum_{n=0}^{\infty} |\alpha_n|^2 < \infty$. If $a(z)$ is rational, then there exists $M >0$ and $0<c<1$ such that we have for any $n \in \N$ 
$$  |\alpha_n| \le M c^n. $$
\end{cor}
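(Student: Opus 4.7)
The plan is to apply the substitution $a(z) \mapsto a(z^{-1})$ suggested in the paragraph right before the statement, which converts a power series of our form into a formal Laurent series in the Kronecker form of Theorem \ref{Kronecker}, and then to read off the bound from the recursion/pole analysis already recalled there.

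Concretely, given rational $a(z) = \sum_{n \ge 0} \alpha_n z^n$ with $\sum_n |\alpha_n|^2 < \infty$, I would consider
\[
b(z) := a(z^{-1}) - \alpha_0 = \sum_{n \ge 0} \alpha_{n+1}\, z^{-n-1}.
\]
The substitution $z \mapsto z^{-1}$ (together with subtracting a constant) preserves rationality, so $b(z)$ is a rational function of the Kronecker form, with coefficients $\beta_n := \alpha_{n+1}$. Square-summability gives $\alpha_n \to 0$, hence $\beta_n \to 0$.

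By the discussion preceding Corollary \ref{estimation of coefficients}, rationality of $b(z)$ together with $\beta_n \to 0$ forces every pole of $b(z)$ to lie in the open unit disk; choosing $c$ with $\max\{|p| : p \text{ is a pole of } b(z)\} < c < 1$, the same discussion produces a constant $M_0 > 0$ with $|\beta_n| \le M_0 c^n$ for all $n$. Translating back via $\beta_n = \alpha_{n+1}$ and enlarging the constant to also cover $\alpha_0$ yields $|\alpha_n| \le M c^n$ for a suitable $M > 0$ and all $n \in \N$, as desired.

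I do not anticipate a serious obstacle here: the only mild subtlety is that a pole of $b(z)$ of multiplicity $k \ge 2$ gives rise, via partial fractions, to a factor $n^{k-1}$ in the naive estimate for $\beta_n$, but this is absorbed by replacing $c$ by a slightly larger constant that is still strictly less than $1$. Everything else is a direct quotation of the preparatory material.
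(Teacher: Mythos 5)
Your proposal is correct and is essentially the paper's own argument. Both pass to Kronecker's Laurent form via the substitution $z \mapsto z^{-1}$ (the paper multiplies by a power of $z$, you subtract the constant term, a purely cosmetic difference), use $\sum_n |\alpha_n|^2 < \infty$ to get $\alpha_n \to 0$ and hence that all poles lie in the open unit disk, and read off the exponential bound $|\alpha_n| \le M c^n$, with the polynomial factor coming from a higher-order pole correctly absorbed into a slightly larger $c < 1$.
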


\section{Main results}\label{Proof of main theorem}

Let $C_{\mathrm{div}}(\bm{s})$ denote the division closure of $\C \langle \bm{s} \rangle$ in $L^{\infty}(\bm{s})$ and $C_{\mathrm{rat}}(\bm{s})$ denote the rational closure of  $\C \langle \bm{s} \rangle$ in $L^{\infty}(\bm{s}).$ 
 
Let us state our main theorem again. 
\begin{thm}\label{main_result}
Let $a \in L^{\infty}(\bm{s})$. Then $\{[r^*_i,a]\}_{i = 1}^d$ are finite rank operators on $\mathcal{F}(H)$ if and only if  $a \in C_{\mathrm{div}}(\bm{s})$. In addition, we have
$$C_{\mathrm{div}}(\bm{s}) = C_{\mathrm{rat}}(\bm{s}) \subset \overline{\C \langle \bm{s} \rangle}$$
where $\overline{\C \langle \bm{s} \rangle}$ is the norm closure of $\C \langle \bm{s} \rangle$ in $L^{\infty}(\bm{s})$.  
\end{thm}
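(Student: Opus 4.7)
My plan is to split the proof into three parts: (i) the easy inclusion $C_{\mathrm{div}}(\bm{s}) \subseteq \mathcal{C}$, where $\mathcal{C} := \{a \in L^{\infty}(\bm{s}) : [r_i^*, a] \text{ has finite rank for all } i\}$; (ii) the hard inclusion $\mathcal{C} \subseteq C_{\mathrm{div}}(\bm{s})$, via rationality of the associated formal power series; and (iii) the identifications $C_{\mathrm{div}} = C_{\mathrm{rat}}$ and $C_{\mathrm{rat}} \subseteq \overline{\C \langle \bm{s} \rangle}$.

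The key initial observation for (i) is that $r_i + r_i^*$ lies in the commutant of $L^{\infty}(\bm{s})$ (it coincides with right multiplication by $s_i$ on $L^2(\bm{s}) \cong \mathcal{F}(H)$), so $[r_i^*, a] = -[r_i, a]$ for every $a \in L^{\infty}(\bm{s})$; a direct calculation then yields $[r_i^*, s_j] = \delta_{ij} P_{\Omega}$, which has rank at most one. By the Leibniz rule, $\mathcal{C}$ is a unital subalgebra containing $\C \langle \bm{s} \rangle$, and the identity $[r_i^*, a^{-1}] = -a^{-1} [r_i^*, a] a^{-1}$ shows $\mathcal{C}$ is closed under $L^{\infty}$-inversion; minimality of $C_{\mathrm{div}}(\bm{s})$ yields (i). The same inversion identity applied entrywise to matrices shows $\mathcal{C}$ is closed under matrix inversion as well, so $C_{\mathrm{rat}}(\bm{s}) \subseteq \mathcal{C}$; combined with (ii), this will yield the equality $C_{\mathrm{div}}(\bm{s}) = C_{\mathrm{rat}}(\bm{s})$.

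For (ii), to $a \in \mathcal{C}$ associate the formal power series $Z_a := \sum_v \alpha_v X^v$ with $\alpha_v = \langle \hat{a}, e_v \rangle$. I first verify condition (1) of Theorem \ref{fundamental_theorem}: that $W := \mathrm{span}_{\C} \{r_w^* \hat{a} : w \in [d]^*\} \subseteq \mathcal{F}(H)$ is finite-dimensional --- under the identification $e_v \leftrightarrow X^v$ this is exactly the space spanned by the right translates of $Z_a$. Since $r_w^* \Omega = 0$ for $|w| \ge 1$, one has the basic identity $r_w^* \hat{a} = [r_w^*, a]\Omega$, and a Leibniz expansion of $[r_w^*, a]$ in terms of the single commutators $[r_i^*, a]$ reduces the right-hand side, when applied to $\Omega$, to a single shifted summand of the form $r_{i_1}^* \cdots r_{i_{n-1}}^* [r_{i_n}^*, a] \Omega \in r_{i_1}^* \cdots r_{i_{n-1}}^* \cdot \mathrm{ran}[r_{i_n}^*, a]$. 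The principal technical obstacle is to control these shifted images globally: I plan to use the Jacobi identity $[r_i^*, [r_j^*, a]] = [r_j^*, [r_i^*, a]] + [[r_i^*, r_j^*], a]$, together with the fact that $[r_i^*, r_j^*] = r_{ij}^* - r_{ji}^*$ is a difference of word-deletion operators whose commutators with $a$ remain finite rank by iteration, to build by saturation a finite-dimensional $r_i^*$-invariant subspace of $\mathcal{F}(H)$ containing $\hat{a}$ and each $\mathrm{ran}[r_i^*, a]$. Once $Z_a$ is known to be rational, Theorem \ref{minimal_representation} furnishes a minimal linear representation, and a Fliess--Sch\"utzenberger-type realization (translated through the Chebyshev basis $\{U_v\}$) then recovers $a$ in the form ${}^t\lambda \, (I_m - M(\bm{s}))^{-1} \gamma$ for some $M \in M_m(\C \langle \bm{s} \rangle)$ with $I_m - M(\bm{s})$ invertible in $M_m(L^{\infty}(\bm{s}))$, placing $a$ in the matrix rational closure.

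For the remaining part of (iii), $C_{\mathrm{div}}(\bm{s}) \subseteq \overline{\C \langle \bm{s} \rangle}$: the norm closure $\overline{\C \langle \bm{s} \rangle}$ is a $C^*$-subalgebra of $L^{\infty}(\bm{s})$, and it is inverse-closed there. Indeed, if $q \in \overline{\C \langle \bm{s} \rangle}$ is invertible in $L^{\infty}(\bm{s})$, then the positive element $q^*q$ is invertible in $L^{\infty}(\bm{s})$, and spectral permanence for self-adjoint elements places $(q^*q)^{-1}$ in $\overline{\C \langle q^*q \rangle} \subseteq \overline{\C \langle \bm{s} \rangle}$, so $q^{-1} = (q^*q)^{-1} q^* \in \overline{\C \langle \bm{s} \rangle}$; by minimality, $C_{\mathrm{div}}(\bm{s}) \subseteq \overline{\C \langle \bm{s} \rangle}$. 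The main obstacle throughout is the finite-dimensionality of $W$ in (ii); everything else follows from algebraic closure arguments and standard $C^*$-algebraic spectral permanence.
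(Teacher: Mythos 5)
Your parts (i) and (iii) are sound, and the spectral-permanence argument in (iii) is a clean alternative to the paper, which obtains $C_{\mathrm{div}}(\bm{s}) \subset \overline{\C\langle\bm{s}\rangle}$ only as a byproduct of the norm convergence in its Proposition \ref{onlyif}. The real difficulty is in (ii), and your sketch there has two genuine gaps.

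First, the reduction to finite-dimensionality of $W = \mathrm{span}\{r_w^*\hat{a}\}$ via Leibniz and Jacobi does not close up. From $[r_w^*,a]\Omega = r_{i_1}^*\cdots r_{i_{n-1}}^*[r_{i_n}^*,a]\Omega$, you know each $r_w^*\hat a$ lies in $r_{i_1}^*\cdots r_{i_{n-1}}^*\cdot\mathrm{ran}[r_{i_n}^*,a]$; but a finite-dimensional subspace $V$ of $\mathcal{F}(H)$ need not be supported in finitely many $H^{\otimes n}$, so $\{r_u^*\,V : u\in[d]^*\}$ can span an infinite-dimensional space. The proposed saturation to a finite-dimensional $r_i^*$-invariant subspace containing $\hat a$ is precisely what you are trying to prove, so as written it is circular; the claim that iterated commutators $[[r_i^*,r_j^*],a]$, etc., remain finite rank needs an actual induction, and even granting it, you have not explained why the process terminates. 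The paper avoids all of this with Lemma \ref{fundamental_equality_free}: $[r_i^*,U_v]\hat U_w = \hat U_{v(iw^*)^{-1}}$, so $[r_i^*,a]\hat U_w$ is \emph{exactly} the right translate of $\hat a$ by $iw^*$. Ranging $i\in[d]$, $w\in[d]^*$ produces all nonempty right translates, so finite rank of the $[r_i^*,a]$ is literally equivalent to finite-generation of the translate module, with no iteration needed.

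Second, the passage from ``$Z_a$ rational'' to ``$a\in C_{\mathrm{div}}(\bm{s})$'' cannot be dismissed as a Fliess--Sch\"utzenberger realization ``translated through the Chebyshev basis.'' Two nontrivial ingredients are missing: (a) $U_vU_w\neq U_{vw}$, so you cannot substitute $X_i\mapsto s_i$ (or $X_i\mapsto U_i$) in a linear representation and keep a multiplicative structure --- this is exactly what the matrix operators $S_i\in M_d(\C)\otimes M_2(L^\infty(\bm{s}))$ of Lemma \ref{correspondence} are built to fix; and (b) the series $\sum_m V(\bm{S})^m$ must be shown to converge in operator norm, which requires combining minimality of the linear representation (Theorem \ref{minimal_representation}, so that $\|\mu(v)\|$ is controlled by the coefficients $\alpha_{uvw}$), the Haagerup-type inequality (Lemmas \ref{haagerup} and \ref{haagerup-application}), and Kronecker's exponential decay estimate (Corollary \ref{estimation of coefficients} via the Hadamard square). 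Without the convergence you do not know that $1-V(\bm{S})$ is invertible in $M_{\bullet}(L^\infty(\bm{s}))$. Finally, your plan places $a$ only in the \emph{rational} closure; the paper uses Lemma \ref{rational_entry} (entries of $(1-x)^{-1}$ lie in the division closure of the entries of $x$ when $\|x^m\|\to0$) to land directly in $C_{\mathrm{div}}(\bm{s})$, which your architecture in fact needs.
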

We basically follow the proof by G. Duchamp and C. Reutenauer \cite{DR97}.
The following two lemmas have important roles in proving our main theorem.
  
\begin{lm}\label{dual_system}
For any $i,j \in [d]$ and $k \in \N$, we have
$$[r_i^*, U_k(s_j)] = \delta^i_j \sum_{l = 1}^k U_{l-1}(s_j)P_{\Omega} U_{k-l}(s_j)$$
where $P_{\Omega}$ is the orthogonal projection onto $\hat{U}_{\Omega} = \Omega$.
\end{lm}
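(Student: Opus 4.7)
The plan is to induct on $k$, building on a single basic commutation relation on the Fock space. The base computation is to show that $[r_i^*, s_j] = \delta^i_j P_\Omega$. This I would verify directly on the orthonormal basis $\{e_v\}_{v \in [d]^*}$: since $r_i^*$ commutes with every left annihilation operator $l_j^*$ (both simply delete an extreme letter from a word), the only obstruction to $r_i^*$ commuting with $l_j$ comes from the empty word, where the left creation inserts a letter before the right annihilation can read it. This pins down the commutator $[r_i^*, l_j] = \delta^i_j P_\Omega$, and since $[r_i^*, l_j^*] = 0$, adding yields the claim for $s_j = l_j + l_j^*$. This matches the stated formula for $k=1$, since $U_1(X) = X$ and the sum on the right has the single term $U_0(s_j) P_\Omega U_0(s_j) = P_\Omega$; for $k=0$ both sides are zero.

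For the inductive step, I would assume the identity for indices $k$ and $k-1$ and apply the Chebyshev recursion $U_{k+1}(s_j) = s_j U_k(s_j) - U_{k-1}(s_j)$ together with the Leibniz rule, obtaining
\begin{align*}
[r_i^*, U_{k+1}(s_j)] &= [r_i^*, s_j]\, U_k(s_j) + s_j\, [r_i^*, U_k(s_j)] - [r_i^*, U_{k-1}(s_j)] \\
&= \delta^i_j\Bigl( P_\Omega U_k(s_j) + \sum_{l=1}^k s_j U_{l-1}(s_j) P_\Omega U_{k-l}(s_j) - \sum_{l=1}^{k-1} U_{l-1}(s_j) P_\Omega U_{k-1-l}(s_j) \Bigr).
\end{align*}
The main algebraic manipulation is to apply the recursion in the dual form $s_j U_{l-1}(s_j) = U_l(s_j) + U_{l-2}(s_j)$ inside the first sum. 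The piece coming from $U_{l-2}(s_j)$, after re-indexing and using $U_{-1} = 0$, telescopes exactly against the third term, leaving $\delta^i_j \sum_{l=0}^{k} U_l(s_j) P_\Omega U_{k-l}(s_j)$, which is the desired right-hand side for $k+1$ after relabelling.

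The proof is essentially a careful bookkeeping exercise, so I do not expect a genuine obstacle; the only mild subtleties are making the base case $[r_i^*, s_j] = \delta^i_j P_\Omega$ transparent (one should be careful that the defect is localized entirely at $\Omega$) and correctly matching the shifted indices in the two sums so the cancellation via the Chebyshev recursion is clean. Once those are in place the identity falls out immediately.
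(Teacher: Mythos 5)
Your proposal is correct and follows essentially the same route as the paper: establish $[r_i^*, s_j] = \delta^i_j P_\Omega$ as the base case, then induct via the Leibniz rule and the Chebyshev recursion $s_j U_{l-1}(s_j) = U_l(s_j) + U_{l-2}(s_j)$ with the same telescoping cancellation. The only cosmetic difference is in the base case, where you split into $[r_i^*, l_j^*]=0$ and $[r_i^*, l_j]=\delta^i_j P_\Omega$ while the paper checks $[r_i^*, s_j]\hat{U}_v$ in one go on the basis; the content is identical.
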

\begin{proof}
This lemma is easily deduced from the property of Chebyshev polynomials (see \cite[Exercise 10 in Section 8.8]{MS17}) and a dual system (see \cite[Semicircular Example 5.13]{Voi98}). However, we give a proof of this lemma for the purpose of self-containment.

First, we show for any $i,j \in [d]$
$$[r_i^*,s_j] = \delta^i_j P_{\Omega}.$$
For any $v \in [d]^*$ we have
\begin{eqnarray*}
[r_i^*, s_j ] \hat{U}_v&=& [r_i^*,l_j^* + l_j] \hat{U}_v \\ 
 &=&r_i^*(l_j^* + l_j) \hat{U}_v - (l_j^* + l_j) r_i^* \hat{U}_v \\
&=&\hat{U}_{(j^{-1}v) i^{-1}} + \hat{U}_{(j v) i^{-1}} - \hat{U}_{j^{-1}(v i^{-1})} - \hat{U}_{j(v i^{-1})}. 
\end{eqnarray*}
Note that $[r_i^*, s_j ] \hat{U}_v = 0$ except for $v = \Omega$ and in this case we have 
$$[r_i^*, s_j ] \hat{U}_{\Omega} = \hat{U}_{j i^{-1}} = \delta^i_j \hat{U}_{\Omega}.$$

Then we can compute $[r_i^*, U_k(s_j)]$ by induction since by the Leibniz rule we have  
\begin{eqnarray*}
[r_i^*,U_{k+1}(s_j)]&=& [r_i^*,s_j U_{k}(s_j)] - [r_i^*,U_{k-1}(s_j)] \\
&=&[r_i^*,s_j ]U_k(s_j) + s_j [r_i^*,U_k(s_j)]  - [r_i^*,U_{k-1}(s_j)] \\
&=& \delta^i_j P_{\Omega} U_k(s_j) + s_j [r_i^*,U_k(s_j)]  - [r_i^*,U_{k-1}(s_j)]
\end{eqnarray*}
and also have by the recursion formula of $U_k$ 
\begin{eqnarray*}
s_j \sum_{l = 1}^k U_{l-1}(s_j)P_{\Omega} U_{k-l}(s_j) &=&\sum_{l = 1}^k U_{l}(s_j)P_{\Omega} U_{k-l}(s_j) + \sum_{l = 2}^k U_{l-2}(s_j)P_{\Omega} U_{k-l}(s_j) \\
&=& \sum_{l = 2}^{k+1} U_{l-1}(s_j)P_{\Omega} U_{k+1-l}(s_j) + \sum_{l = 1}^{k-1} U_{l-1}(s_j)P_{\Omega} U_{k-1-l}(s_j).
\end{eqnarray*}
By multiplying by $\delta^i_{j}$ and using the induction hypothesis,
$$ s_j [r_i^*,U_k(s_j)]=\delta^i_j \sum_{l = 2}^{k+1} U_{l-1}(s_j)P_{\Omega} U_{k+1-l}(s_j) + [r_i^*,U_{k-1}(s_j)],$$
which gives the asserted formula for $[r_i^*,U_{k+1}(s_j)].$
\end{proof}
\begin{lm}\label{fundamental_equality_free}
For $v,w \in [d]^*$ and $i \in [d]$, we have
$$[r^*_i, U_v]\hat{U}_w = \hat{U}_{v (i w^*)^{-1}}$$
where $w^*$ is the transpose of $w$, in other words $w^* = i_n^{k_n}i_{n-1}^{k_{n-1}} \cdots i_1^{k_1}$ when $w = i_1^{k_1}i_2^{k_2}\cdots i_n^{k_n}$.  
\end{lm}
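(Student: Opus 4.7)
The idea is to first derive a compact closed-form expression for $[r_i^*, U_v]$ by iterating Lemma~\ref{dual_system} across all the Chebyshev factors of $U_v$, and then apply it to $\hat{U}_w$. Writing $v = i_1^{k_1} i_2^{k_2} \cdots i_m^{k_m}$ in block form and expanding $[r_i^*, U_{k_1}(s_{i_1}) \cdots U_{k_m}(s_{i_m})]$ with the Leibniz rule, Lemma~\ref{dual_system} shows that only the blocks with $i_j = i$ contribute, each yielding a telescoped sum indexed by $l \in \{1, \ldots, k_j\}$. Using $i_{j-1} \neq i_j = i \neq i_{j+1}$ to ensure compressed-word compatibility at the block boundaries, the pairs $(j,l)$ are in bijection with the positions $p$ in $v$ where $v_p = i$, and the formula becomes
\[
[r_i^*, U_v] \;=\; \sum_{v = \alpha\, i\, \beta} U_\alpha\, P_\Omega\, U_\beta,
\]
the sum ranging over all factorizations $v = \alpha i \beta$ with $\alpha, \beta \in [d]^*$.

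The second step is to apply this operator to $\hat{U}_w$. Since $P_\Omega \xi = \langle \xi, \Omega \rangle_{\mathcal{F}(H)}\, \Omega$ for all $\xi \in \mathcal{F}(H)$, and since $U_\beta^* = U_{\beta^*}$ (as $U_k(s_j)^* = U_k(s_j)$ and the product reverses under adjoint), one has
\[
P_\Omega U_\beta \hat{U}_w \;=\; \langle U_\beta U_w \Omega, \Omega \rangle_{\mathcal{F}(H)}\, \Omega \;=\; \langle \hat{U}_w, \hat{U}_{\beta^*} \rangle_{\mathcal{F}(H)}\, \Omega \;=\; \delta_{\beta, w^*}\, \Omega,
\]
using the orthonormality of $\{\hat{U}_u\}_{u \in [d]^*}$. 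Consequently
\[
[r_i^*, U_v] \hat{U}_w \;=\; \sum_{v = \alpha\, i\, w^*} \hat{U}_\alpha.
\]

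The third and final step is the trivial observation that this sum contains at most one term: if $i w^*$ is a suffix of $v$ there is a unique $\alpha = v(iw^*)^{-1}$ realizing the decomposition, otherwise no such $\alpha$ exists. In both cases the right-hand side equals $\hat{U}_{v(iw^*)^{-1}}$ under the convention $\hat{U}_0 = 0$, which is the claim. The one delicate point is the telescoping in the first step: one has to verify that the prefix $U_{k_1}(s_{i_1}) \cdots U_{k_{j-1}}(s_{i_{j-1}}) U_{l-1}(s_{i_j})$ and suffix $U_{k_j - l}(s_{i_j}) U_{k_{j+1}}(s_{i_{j+1}}) \cdots U_{k_m}(s_{i_m})$ obtained from Lemma~\ref{dual_system} really correspond to $U_\alpha$ and $U_\beta$ in the compressed notation, which requires the explicit alternation $i_{j-1} \neq i_j \neq i_{j+1}$ (and the boundary cases $l = 1$, $l = k_j$ where one of the two blocks at the split point degenerates).
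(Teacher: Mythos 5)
Your proof is correct and follows essentially the same route as the paper's: expand $[r_i^*,U_v]$ via the Leibniz rule and Lemma~\ref{dual_system}, apply it to $\hat{U}_w$, use $P_\Omega \xi = \langle \xi,\Omega\rangle\Omega$ together with $U_\beta^* = U_{\beta^*}$, and invoke orthonormality of $\{\hat{U}_u\}_{u\in[d]^*}$. The only cosmetic difference is that you compress the paper's double sum over block index $m$ and offset $j$ into the cleaner single sum $\sum_{v=\alpha i\beta} U_\alpha P_\Omega U_\beta$, and you correctly flag the point the paper glosses over — that $i_{j-1}\neq i_j\neq i_{j+1}$ guarantees the prefix and suffix really are $U_\alpha$, $U_\beta$ in compressed form at the boundary cases.
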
    
\begin{proof}
By Lemma \ref{dual_system} and the fact that $[r_i^*,\cdot]$ is a derivation, we have for $v=i_1^{k_1}i_2^{k_2}\cdots i_n^{k_n},$
\begin{eqnarray*}
&\ & [r_i^*, U_v] \hat{U}_w \\ 
&=& \left(\sum_{m=1}^n U_{k_1}(s_{i_1}) \cdots U_{k_{m-1}}(s_{i_{m-1}})[r_i^*, U_{k_m}(s_{i_m})]U_{k_{m+1}}(s_{i_{m+1}}) \cdots U_{k_n}(s_{i_n})  \right) \hat{U}_w \\
&=&\sum_{m=1}^n\sum_{j=1}^{k_m} \delta_i^{i_m} U_{k_1}(s_{i_1}) \cdots U_{k_{m-1}}(s_{i_{m-1}})U_{j-1}(s_{i_m})P_{\Omega} U_{k_m - j}(s_{i_m})U_{k_{m+1}}(s_{i_{m+1}}) \cdots U_{k_n}(s_{i_n}) \hat{U}_w \\
&=& \sum_{m=1}^n\sum_{j=1}^{k_m} \delta_i^{i_m} U_{i_1^{k_1}\cdots i_m^{j-1}}P_{\Omega}U_{i_m^{k_m -j}\cdots i_n^{k_n}} \hat{U}_w.
\end{eqnarray*}
Since we have 
\begin{eqnarray*}P_{\Omega}U_{i_m^{k_m -j}\cdots i_n^{k_n}} \hat{U}_w &=& \langle U_{i_m^{k_m -j}\cdots i_n^{k_n}} \hat{U}_w, \Omega \rangle_{\mathcal{F}(H)} \Omega \\
&=&\langle\hat{U}_w , \hat{U}_{i_n^{k_n}\cdots i_m^{k_m -j}} \rangle_{\mathcal{F}(H)}\Omega
\end{eqnarray*}
 and $\{\hat{U}_w\}_{w \in [d]^*}$ is an orthonormal basis, we conclude
\begin{eqnarray*}
[r^*_i,U_v] \hat{U}_w &=&  \sum_{m=1}^n\sum_{j=1}^{k_m} \delta_i^{i_m}\langle\hat{U}_w , \hat{U}_{i_n^{k_n}\cdots i_m^{k_m -j}}\rangle_{\mathcal{F}(H)} \hat{U}_{i_1^{k_1}\cdots i_m^{j-1}} \\
&=& \hat{U}_{v(i w^*)^{-1}}.
\end{eqnarray*}
\end{proof}
Next we associate elements $\sum_{v \in [d]^*}\alpha_{v} \hat{U}_v$ in $\mathcal{F}(H)$ with  noncommutative formal power series $\sum_{v \in [d]^*} \alpha_v X^{v}$. Since $U_v U_w \neq U_{vw}$ in general, we cannot directly connect $U_v$ with $X^v$ while keeping a multiplicative structure. However we can connect them by using a matrix representation, which may help us to prove our main theorem.
\begin{lm}\label{correspondence}
For each $i \in [d]$, we put 
$$S_i = E_{i i} \otimes \begin{pmatrix}s_i & -1 \\ 1 & 0\end{pmatrix} + \sum_{j \neq i } E_{j i} \otimes  \begin{pmatrix}s_i & -1 \\ 0 & 0\end{pmatrix} \in M_d(\C) \otimes M_2(L^{\infty}(\bm{s}))$$
where $E_{j i} \in M_d(\C)$ is a matrix whose $(j,i)$ entry is $1$ and other entries are $0$.
Then for $v =  i_1^{k_1} i_2^{k_2} \cdots i_n^{k_n} \in [d]^*(i_1 \neq i_2 \neq \cdots \neq i_n)$ we have
$$U_v =  \begin{pmatrix} 1 & 0 \end{pmatrix} ({}^t e_1 \otimes I_2) \ S_{i_1}^{k_1} S_{i_2}^{k_2} \cdots S_{i_n}^{k_n} \ (e \otimes I_2) \begin{pmatrix} 1 \\ 0 \end{pmatrix}$$
where $I_2$ is the identity matrix and $\{e_i\}_{i \in [d]} \subset \C^d$ is the standard basis of $\C^d$, and we put $e = \sum_{i=1}^d e_i$.      
\end{lm}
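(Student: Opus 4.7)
The plan is to recognize the $2 \times 2$ block $T(x) := \begin{pmatrix} x & -1 \\ 1 & 0 \end{pmatrix}$ as the transfer matrix of the Chebyshev recursion and to read off $U_v$ from the $(1,1)$ entry of the full product. A one-line induction using $U_{k+1}(x) = x U_k(x) - U_{k-1}(x)$ yields the single-letter identity
$$T(x)^k = \begin{pmatrix} U_k(x) & -U_{k-1}(x) \\ U_{k-1}(x) & -U_{k-2}(x) \end{pmatrix}, \qquad k \ge 0,$$
so the $(1,1)$ entry of $T(s_i)^k$ is exactly $U_k(s_i)$, the polynomial carried by a single block $i^k$ of $v$.

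The next step is to understand $S_i^k$ block by block. Writing $S_i = \sum_j E_{ji} \otimes A_j^{(i)}$ with $A_i^{(i)} = T(s_i)$ and $A_j^{(i)} = \begin{pmatrix} s_i & -1 \\ 0 & 0 \end{pmatrix}$ for $j \neq i$, the relation $E_{ji} E_{li} = \delta_{il} E_{ji}$ confines the non-zero $2 \times 2$ blocks of $S_i^k$ to column $i$, and a short induction gives
$$(S_i^k)_{i,i} = T(s_i)^k, \qquad (S_i^k)_{j,i} = A_j^{(i)} T(s_i)^{k-1} = \begin{pmatrix} U_k(s_i) & -U_{k-1}(s_i) \\ 0 & 0 \end{pmatrix} \text{ for } j \neq i.$$
The decisive consequence is that the first row of $(S_i^k)_{j,i}$ is $\bigl(U_k(s_i),\, -U_{k-1}(s_i)\bigr)$ independently of $j$. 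Now in the product $X := S_{i_1}^{k_1} \cdots S_{i_n}^{k_n}$, the column-$i_m$ support of each factor combined with the alternation $i_1 \neq i_2 \neq \cdots \neq i_n$ forces
$$X_{j, i_n} = (S_{i_1}^{k_1})_{j, i_1} \cdot (S_{i_2}^{k_2})_{i_1, i_2} \cdots (S_{i_n}^{k_n})_{i_{n-1}, i_n}$$
with all other blocks equal to zero. Every factor from the second onward has zero second row, so propagating the first row through the product gives $\bigl(U_{k_1}(s_{i_1}) \cdots U_{k_n}(s_{i_n}),\, *\bigr) = (U_v,\, *)$, again independent of $j$.

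Finally, one unpacks the sandwich vectors: $\begin{pmatrix} 1 & 0 \end{pmatrix}({}^t e_1 \otimes I_2)$ selects the first scalar row of the first block-row of $X$, while $(e \otimes I_2)\begin{pmatrix} 1 \\ 0 \end{pmatrix}$ sums all block-columns and then picks the first scalar column. Since only column $i_n$ of $X$ is non-zero, the resulting scalar is the $(1,1)$ entry of $X_{1, i_n}$, which by the previous step equals $U_v$. The empty-word case $v = \Omega$ is checked separately: the empty product is $I_{2d}$ and the two sandwich vectors pair to $1 = U_\Omega$. The principal subtlety is that the statement uses ${}^t e_1$ rather than ${}^t e_{i_1}$ on the left, which looks incorrect at first sight; the whole argument hinges on the $j$-independence of the first row of $X_{j, i_n}$ established above. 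Beyond that, the only real work is the tensor-indexing book-keeping needed to confirm that $({}^t e_1 \otimes I_2)$ selects the first block-row of size $2$ rather than the first scalar row of size $2d$.
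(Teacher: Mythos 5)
Your proof is correct and follows essentially the same route as the paper: both rely on the transfer-matrix identity for $T(x)^k$, the reduction $S_i^k = E_{ii}\otimes T(s_i)^k + \sum_{j\neq i}E_{ji}\otimes PT(s_i)^k$, and the observation that the left sandwich vector is insensitive to whether or not the starting block row is $i_1$ (you track the first row directly; the paper encodes the same fact via $\begin{pmatrix}1&0\end{pmatrix}P=\begin{pmatrix}1&0\end{pmatrix}$). The remaining details — column-$i_n$ support of $X$, zero second row of all but the leading factor, the empty-word base case — are handled correctly.
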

 
\begin{proof}
Since Chebyshev polynomials $U_n(X)$ satisfy for $n \in \N$
  $$\begin{pmatrix}U_n(X) \\ U_{n-1}(X) \end{pmatrix} = \begin{pmatrix} X & -1 \\ 1 & 0\end{pmatrix}\begin{pmatrix} U_{n-1}(X) \\ U_{n-2}(X)\end{pmatrix},$$
we can show that
$$ \begin{pmatrix} X & -1 \\ 1 & 0\end{pmatrix}^n =  \begin{pmatrix}U_n(X) & -U_{n-1}(X) \\ U_{n-1}(X)& -U_{n-2}(X) \end{pmatrix}.$$
In particular, we have for any $i \in [d]$ and $n \in \Z_{\ge 0}$
$$U_{i^n} =  \begin{pmatrix} 1 & 0 \end{pmatrix}  \begin{pmatrix}s_i & -1 \\ 1 & 0\end{pmatrix}^n \begin{pmatrix} 1 \\ 0 \end{pmatrix}.$$
Then we have for $v = i_1^{k_1} i_2^{k_2} \cdots i_n^{k_n}$ 
\begin{eqnarray*}
U_v &=& U_{i_1^{k_1}} U_{i_2^{k_2}} \cdots U_{i_n^{k_n}} \\
&=& \prod_{l = 1}^n \begin{pmatrix} 1 & 0 \end{pmatrix}\begin{pmatrix} s_{i_l} & -1 \\ 1 & 0 \end{pmatrix}^{k_l} \begin{pmatrix} 1 \\ 0 \end{pmatrix}\\
&=& \begin{pmatrix} 1 & 0 \end{pmatrix} \left[\prod_{l = 1}^n P\begin{pmatrix} s_{i_l} & -1 \\ 1 & 0 \end{pmatrix}^{k_l} P \right] \begin{pmatrix} 1 \\ 0 \end{pmatrix}
\end{eqnarray*}
where we put $P =\begin{pmatrix} 1 & 0 \\ 0 & 0 \end{pmatrix}$. Note that $P \begin{pmatrix}s_i & -1 \\ 1 & 0\end{pmatrix} = \begin{pmatrix}s_i & -1 \\ 0 & 0\end{pmatrix}$. Since we have 
$$S_i^n = E_{i i} \otimes \begin{pmatrix}s_i & -1 \\ 1 & 0\end{pmatrix}^n + \sum_{j \neq i } E_{j i} \otimes  P\begin{pmatrix}s_i & -1 \\ 1 & 0\end{pmatrix}^n,$$
we obtain for $i_1 \neq i_2 \neq \cdots \neq i_n$
\begin{eqnarray*}
 S_{i_1}^{k_1} S_{i_2}^{k_2} \cdots S_{i_n}^{k_n} &=& \prod_{l=1}^n \left[ E_{i_l i_l} \otimes \begin{pmatrix}s_{i_l} & -1 \\ 1 & 0\end{pmatrix}^{k_l} + \sum_{j \neq i_l } E_{j i_l} \otimes  P\begin{pmatrix}s_{i_l} & -1 \\ 1 & 0\end{pmatrix}^{k_l}\right] \\
&=& E_{i_1 i_n} \otimes \left[\begin{pmatrix}s_{i_1} & -1 \\ 1 & 0\end{pmatrix}^{k_1}\prod_{l = 2}^n P\begin{pmatrix} s_{i_l} & -1 \\ 1 & 0 \end{pmatrix}^{k_l}\right]  \\
&\ & \qquad + \sum_{j \neq i_1} E_{j i_n} \otimes \left[\prod_{l = 1}^n P\begin{pmatrix} s_{i_l} & -1 \\ 1 & 0 \end{pmatrix}^{k_l}\right]. 
\end{eqnarray*}
 Thus we conclude (note that $\begin{pmatrix} 1 & 0 \end{pmatrix} P = \begin{pmatrix} 1 & 0 \end{pmatrix}$),
\begin{eqnarray*}
\begin{pmatrix} 1 & 0 \end{pmatrix} ({}^t e_1 \otimes I_2) \ S_{i_1}^{k_1} S_{i_2}^{k_2} \cdots S_{i_n}^{k_n} \ (e \otimes I_2) \begin{pmatrix} 1 \\ 0 \end{pmatrix} &=& \begin{pmatrix} 1 & 0 \end{pmatrix} \left[\prod_{l = 1}^n P\begin{pmatrix} s_{i_l} & -1 \\ 1 & 0 \end{pmatrix}^{k_l} P \right] \begin{pmatrix} 1 \\ 0 \end{pmatrix} \\
&=& U_v.
\end{eqnarray*}
\end{proof}
Next step is to show convergence of $\sum_{w \in [d]^*}\alpha_{v} S^v$ under certain assumptions. In order to estimate the operator norm of $\sum_{w \in [d]^*}\alpha_{v} S^v$, we use the Haagerup type inequality for the full Fock space which was proved by M. Boz\.{e}jko \cite{Bozejko99} in terms of the  $q$-deformed Fock space. One can get the inequality for the full Fock space as $q=0$. Here, we revisit a proof of this inequality in the case $q=0$ for reader's convenience.

\begin{lm}[Haagerup inequality]\label{haagerup}
Let  $m \in \Z_{\ge 0}$ and $\{\alpha_v\}_{|v|=m}$ be a family of complex numbers. Then we have 
$$\left \|\sum_{|v| = m} \alpha_v U_v \right \| \le (m+1)\left \|\sum_{|v| = m} \alpha_v \hat{U}_v\right \|_{\mathcal{F}(H)},$$
where $\|\cdot\|$ is the operator norm on $B(\mathcal{F}(H))$ and $\| \cdot \|_{\mathcal{F}(H)}$ is the norm defined by $\sqrt{\langle \xi , \xi \rangle_{\mathcal{F}(H)}}$ for $\xi \in \mathcal{F}(H)$. 
\end{lm}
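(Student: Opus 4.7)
The strategy is to decompose $T := \sum_{|v|=m}\alpha_v U_v$ according to how it shifts homogeneous tensor degree, and to bound each piece by $\|\hat T\|_{\mathcal F(H)}$, where $\hat T := T\Omega = \sum_{|v|=m}\alpha_v \hat U_v \in H^{\otimes m}$. Because $U_v$ is the Wick-ordered polynomial of total degree $m$ in $\bm s$, one expects that $U_v$ sends $H^{\otimes n}$ into $\bigoplus_{k=0}^{\min(m,n)} H^{\otimes(n+m-2k)}$, yielding at most $m+1$ homogeneous output components; the factor $(m+1)$ will then come from the triangle inequality.

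The key technical step is a Wick-type contraction formula: for $v = v_1\cdots v_m$ and $w = w_1\cdots w_n$ in $[d]^*$,
$$U_v \hat U_w = \sum_{k=0}^{\min(m,n)} \delta_{v_{m-k+1}\cdots v_m,\, w_1\cdots w_k}\, \hat U_{v_1\cdots v_{m-k}\, w_{k+1}\cdots w_n},$$
where the Kronecker delta compares words letter by letter. I would prove this by induction on $m$, using the Chebyshev recursion $s_j U_k(s_j) = U_{k+1}(s_j) + U_{k-1}(s_j)$ together with the elementary action of $s_j = l_j + l_j^*$ on $\hat U_w$; the extra $-U_{k-1}$ term is precisely what cancels the unwanted contributions and produces the rigid "rightmost suffix of $v$ against leftmost prefix of $w$" contraction pattern. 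Alternatively, one may identify $U_v$ with the free Wick product $W(e_{v_1},\dots,e_{v_m})$ on $\mathcal{F}(H)$ and quote the standard Wick formula.

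Summing the formula against $\alpha_v$ and grouping by $k$ produces $T = \sum_{k=0}^m T^{(k)}$, where $T^{(k)}$ vanishes on $H^{\otimes n}$ for $n<k$ and, on $H^{\otimes n}$ for $n\ge k$, acts as $\psi_k \otimes \mathrm{id}_{H^{\otimes(n-k)}}$ under $H^{\otimes n}\cong H^{\otimes k}\otimes H^{\otimes(n-k)}$, with $\psi_k: H^{\otimes k}\to H^{\otimes(m-k)}$ defined by
$$\psi_k\bigl(\hat U_{w_1\cdots w_k}\bigr) = \sum_{|v'|=m-k} \alpha_{v' w_1\cdots w_k}\,\hat U_{v'}.$$
Crucially, $\psi_k$ is exactly the operator associated to $\hat T$ seen as a vector of $H^{\otimes(m-k)}\otimes H^{\otimes k}$, so $\|\psi_k\|_{\mathrm{op}} \le \|\psi_k\|_{\mathrm{HS}} = \|\hat T\|_{\mathcal F(H)}$. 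For $\xi = \sum_n \xi_n\in\mathcal F(H)$, the vectors $T^{(k)}\xi_n \in H^{\otimes(n+m-2k)}$ lie in mutually orthogonal subspaces as $n$ varies with $k$ fixed, so
$$\bigl\|T^{(k)}\xi\bigr\|^2_{\mathcal F(H)} = \sum_{n\ge k}\bigl\|T^{(k)}\xi_n\bigr\|^2 \le \|\psi_k\|_{\mathrm{op}}^2\,\|\xi\|^2 \le \|\hat T\|_{\mathcal F(H)}^2\,\|\xi\|^2.$$
The triangle inequality $\|T\|\le \sum_{k=0}^m \|T^{(k)}\|$ then gives $\|T\|\le(m+1)\|\hat T\|_{\mathcal F(H)}$. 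The main obstacle is establishing the Wick-type contraction formula with the correct combinatorics; once it is in hand, everything reduces to the standard tensor-vector/operator correspondence and orthogonality of tensor-degree components.
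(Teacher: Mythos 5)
Your argument is essentially the paper's argument. The paper quotes the operator-level Wick formula $U_v = \sum_{k=0}^m l_{v_1}\cdots l_{v_k}l^*_{v_{k+1}}\cdots l^*_{v_m}$ from Bo\.{z}ejko--K\"{u}mmerer--Speicher, then splits $\sum_{|v|=m}\alpha_v U_v$ into $m+1$ pieces $F^{(k)} = \sum_{|u|=k,\,|w|=m-k}\alpha_{uw}\,l_u l^*_w$ according to the number of creations, and bounds each piece by $\|\hat T\|_{\mathcal F(H)}$ via an explicit orthogonality/Cauchy--Schwarz computation that is exactly the Hilbert--Schmidt bound you invoke. Your $T^{(k)}$ is the action-level version of $F^{(k)}$ and your $\psi_k$ is the matrix whose Hilbert--Schmidt norm the paper computes implicitly. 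The triangle inequality over $k=0,\dots,m$ then yields the factor $m+1$ identically in both arguments.

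One combinatorial detail in your Wick contraction formula needs correction: the delta should compare the \emph{reverse} of the suffix of $v$ against the prefix of $w$, i.e.\ the condition is $v_m=w_1,\ v_{m-1}=w_2,\ \dots,\ v_{m-k+1}=w_k$, not $v_{m-k+1}\cdots v_m = w_1\cdots w_k$ read left-to-right. (Check $m=2$, $v=i_1i_2$ with $i_1\neq i_2$, $w=j_1j_2$: the scalar term of $U_{i_1i_2}\hat U_{j_1j_2}$ is $\delta_{i_2 j_1}\delta_{i_1 j_2}\Omega$, not $\delta_{i_1 j_1}\delta_{i_2 j_2}\Omega$.) This forces the corresponding reversal in the definition of $\psi_k$, namely $\psi_k(\hat U_{w_1\cdots w_k}) = \sum_{|v'|=m-k}\alpha_{v' w_k w_{k-1}\cdots w_1}\hat U_{v'}$. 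Since this is merely a relabeling of the index set $\{w : |w|=k\}$, the Hilbert--Schmidt norm identity $\|\psi_k\|_{\mathrm{HS}}^2 = \sum_{|v|=m}|\alpha_v|^2 = \|\hat T\|^2_{\mathcal F(H)}$ and hence your final estimate are unaffected, but the formula as you wrote it is not the one that would come out of the induction you sketch.
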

\begin{proof}
First, we show $$\max\left\{\left \|\sum_{|v|=m}\alpha_{v} l_v \right \|, \left \|\sum_{|v| = m}\alpha_{v} l^*_v\right \|\right\} \le \left \|\sum_{|v| = m} \alpha_v \hat{U}_v\right \|_{\mathcal{F}(H)}$$
where $l_v = l_{v_1}l_{v_2}\cdots l_{v_m}, \ l^*_v = l^*_{v_1}l^*_{v_2}\cdots l^*_{v_m}$ for $v = v_1v_2 \cdots v_m.$ Since we have for $\xi \in H^{\otimes  n}$
\begin{eqnarray*}
\left \|\sum_{|v| = m}\alpha_{v} l_v\xi \right \|_{\mathcal{F}(H)}^2 &=& \left \|\sum_{|v| = m}\alpha_{v} (e_v \otimes \xi)\right \|_{\mathcal{F}(H)}^2 \\
 &=& \sum_{|v| = m}|\alpha_{v}|^2 \| \xi \|_{\mathcal{F}(H)}^2 
\end{eqnarray*}
and $\sum_{|v| = m}\alpha_{v} l_v(\xi)$ and $\sum_{|v| = m}\alpha_{v} l_v(\eta)$ are orthogonal for $\xi \in H^{\otimes n}$ and $\eta \in H^{\otimes n'}, \ n \neq n'$, we have $\|\sum_{|v| = m}\alpha_{v} l_v\| \le \|\sum_{|v| = m} \alpha_v \hat{U}_v\|_{\mathcal{F}(H)}.$ Moreover, by taking involution, we have
\begin{eqnarray*} 
\left \|\sum_{|v| = m} \alpha_v l^*_v\right \| &=& \left \|\left(\sum_{|v| = m} \alpha_v l^*_v\right)^* \right\| \\
&=& \left \|\sum_{|v| = m} \overline{\alpha_v} l_{v^*}\right\| \\
&\le&\sqrt{\sum_{|v| = m} |\alpha_{v^*}|^2} \\
&=&  \left \|\sum_{|v| = m} \alpha_v \hat{U}_v\right \|_{\mathcal{F}(H)}.  
\end{eqnarray*}
In order to prove this lemma, we use the following characterization of $U_v$ for $v =  v_1 \cdots v_m \in [d]^*, v_i \in [d]$ (see Proposition 2.7 in \cite{BKS97})
$$U_v = \sum_{k=0}^m l_{v_1}\cdots l_{v_k}l^*_{v_{k+1}}\cdots l^*_{v_m}.$$ 
From this formula, we rewrite $\sum_{|v| = m} \alpha_v U_v$ by $\sum_{k=0}^{m} F^{(k)}$ where $F^{(k)}$ denotes
$$\sum_{\substack{|u|=k \\ |v| = m-k}} \alpha_{u v} l_u l^*_v$$
for $k = 0,\ldots,n$. We will show $\|F^{(k)}\| \le \|\sum_{|v| = m} \alpha_v \hat{U}_v\|_{\mathcal{F}(H)}$ for any $k$. Since we have already proved this for $k=0,m$ in the previous argument, we fix $k=1,\ldots,n-1$. In addition, since $F^{(k)}(\xi)$ and $F^{(k)}(\eta)$ are orthogonal when $\xi \in H^{\otimes n}$, $\eta \in H^{\otimes n'}$ where $n \neq n'$, it suffices to show that $\|F^{(k)}(\xi)\|_{\mathcal{F}(H)} \le \|\sum_{|v| = m} \alpha_v \hat{U}_v\|_{\mathcal{F}(H)}\|\xi\|_2$ for $\xi \in H^{\otimes n}$ where $n \ge m -k$ (note that $F^{(k)}(\xi) = 0$ when $n < m -k$).       
Then we have  
\begin{eqnarray*}
\|F^{(k)}\xi\|_{\mathcal{F}(H)}^2 &=& \langle \sum_{\substack{|u_1| = k \\ |u_2| = n-k}}  \alpha_{u_1 u_2} l_{u_1}l^*_{u_2} \xi,  \sum_{\substack{|v_1| = k \\ |v_2| = n-k}}  \alpha_{v_1 v_2} l_{v_1}l^*_{v_2}\xi   \rangle_{\mathcal{F}(H)} \\
&=& \sum_{\substack{|u_1|=|v_1| = k \\ |u_2|=|v_2| = n-k}} \alpha_{u_1 u_2} \overline{\alpha_{v_1 v_2}} \langle  l_{u_1}l^*_{u_2} \xi, l_{v_1}l^*_{v_2}\xi  \rangle_{\mathcal{F}(H)} \\
&=&  \sum_{\substack{|u_1|=|v_1| = k \\ |u_2|=|v_2| = n-k}} \alpha_{u_1 u_2} \overline{\alpha_{v_1 v_2}} \langle  e_{u_1},e_{v_1}\rangle_{\mathcal{F}(H)} \langle l^*_{u_2} \xi, l^*_{v_2}\xi  \rangle_{\mathcal{F}(H)}.
\end{eqnarray*}
Since $\{e_v\}_{v \in [d]^*}$ is an orthonormal basis of $\mathcal{F}(H)$, the last term is equal to 
\begin{eqnarray*}
\sum_{\substack{|u| = k \\ |u_2|=|v_2| = n-k}} \alpha_{u u_2} \overline{\alpha_{u v_2}} \langle l^*_{u_2} \xi, l^*_{v_2}\xi  \rangle_{\mathcal{F}(H)} &=& \sum_{|u| = k} \langle \sum_{|u_2| = n- k} \alpha_{u u_2} l^*_{u_2} \xi,\sum_{|v_2| = n- k}\alpha_{u v_2} l^*_{v_2}\xi  \rangle_{\mathcal{F}(H)} \\
&=& \sum_{|u| = k} \left \| \sum_{|v| = n- k} \alpha_{u v} l^*_{v} \xi \right \|^2_{\mathcal{F}(H)}. 
\end{eqnarray*}
Since we have $\left \|\sum_{|v| = n- k} \alpha_{u v} l^*_{v}\right \| \le \sqrt{\sum_{|v| = n-k}|\alpha_{u v}|^2}$, we obtain
$$  \|F^{(k)}\xi\|_2^2 \le \sum_{|u| = k}\sum_{|v| = n-k}|\alpha_{u v}|^2\|\xi\|_{\mathcal{F}(H)}^2 = \left \|\sum_{|v| = m} \alpha_v \hat{U}_v\right \|_{\mathcal{F}(H)}^2  \|\xi\|_{\mathcal{F}(H)}^2.$$
Thus we conclude
\begin{eqnarray*}
\left\|\sum_{|v| = m} \alpha_v U_v\right \| &=& \left \|\sum_{k=0}^m F^{(k)}\right \| \\
&\le& \sum_{k=0}^m  \|F^{(k)}\| \\
&\le& (m+1)\left \|\sum_{|v| = m} \alpha_v \hat{U}_v\right\|_{\mathcal{F}(H)}.
\end{eqnarray*}
\end{proof}

\begin{lm}\label{haagerup-application}
Let us take $S_1,\ldots,S_d \in M_d(\C) \otimes M_2(L^{\infty}(\bm{s}))$ as in Lemma \ref{correspondence}. Then we have for any $m \in \Z_{\ge 0}$
$$\left \|\sum_{|v| = m} \alpha_v S^v \right \| \le 4d^2 (m+1)\left \|\sum_{|v| = m} \alpha_v \hat{U}_v \right \|_{\mathcal{F}(H)}.$$   
\end{lm}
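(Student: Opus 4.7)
The plan is to reduce the matrix-valued bound to entrywise scalar bounds and then invoke Lemma~\ref{haagerup} on each entry. Writing
$$\sum_{|v|=m} \alpha_v S^v = \sum_{a,b \in [d]} \sum_{p,q \in \{1,2\}} E_{ab} \otimes E_{pq} \otimes X_{a,b;p,q}$$
with $X_{a,b;p,q} \in L^{\infty}(\bm{s})$, the triangle inequality gives $\bigl\|\sum_{|v|=m}\alpha_v S^v\bigr\| \le 4d^2 \max_{a,b,p,q}\|X_{a,b;p,q}\|$, so it suffices to prove the scalar bound $\|X_{a,b;p,q}\| \le (m+1)\bigl\|\sum_{|v|=m}\alpha_v \hat{U}_v\bigr\|_{\mathcal{F}(H)}$ for each choice of $(a,b,p,q)$.

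The key step is an explicit closed form for these entries. Set $T_i := \bigl(\begin{smallmatrix} s_i & -1 \\ 1 & 0 \end{smallmatrix}\bigr)$ and $T_i' := E_{11} T_i = \bigl(\begin{smallmatrix} s_i & -1 \\ 0 & 0 \end{smallmatrix}\bigr)$, so that $S_i = \sum_j E_{ji}\otimes T_i^{(j)}$ with $T_i^{(j)} = T_i$ for $j = i$ and $T_i^{(j)} = T_i'$ otherwise; collapsing the matrix-unit products then yields
$$S^v = \sum_{a \in [d]} E_{a,\,v_m} \otimes T_{v_1}^{(a)} T_{v_2}^{(v_1)} \cdots T_{v_m}^{(v_{m-1})}.$$
Because $T_i' = E_{11} T_i$, every transition between distinct letters of $v = i_1^{k_1}\cdots i_n^{k_n}$ inserts the rank-one projector $E_{11}$ into the $2\times 2$ product. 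Combining this with the Chebyshev identity $T_i^k = \bigl(\begin{smallmatrix} U_k(s_i) & -U_{k-1}(s_i) \\ U_{k-1}(s_i) & -U_{k-2}(s_i)\end{smallmatrix}\bigr)$ used in Lemma~\ref{correspondence}, a short induction on the number of blocks $n$ of $v$ shows that the $(p,q)$ entry $\bigl(T_{v_1}^{(a)} T_{v_2}^{(v_1)} \cdots T_{v_m}^{(v_{m-1})}\bigr)_{p,q}$ equals $\pm U_{w_{p,q}(v)}$, vanishing exactly when $p=2$ and $a\ne v_1$, where $w_{p,q}(v)$ is obtained from $v$ by deleting no letter, the last letter, the first letter, or both end letters, according as $(p,q) = (1,1),(1,2),(2,1),(2,2)$. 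In particular $|w_{p,q}(v)| = m - [p=2] - [q=2]$.

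With this formula, each $X_{a,b;p,q}$ is a linear combination of $U_w$ with $|w| = n(p,q) := m - [p=2] - [q=2]$. The constraints $v_m = b$, together with $v_1 = a$ when $p=2$, make the reconstruction $w \mapsto v$ injective on the domain of contributing indices, so the $\ell^2$-norm of the coefficient family is bounded by $\sqrt{\sum_{|v|=m}|\alpha_v|^2} = \bigl\|\sum_{|v|=m}\alpha_v \hat{U}_v\bigr\|_{\mathcal{F}(H)}$. Applying Lemma~\ref{haagerup} at word length $n(p,q) \le m$ gives $\|X_{a,b;p,q}\| \le (n(p,q)+1)\,\|\hat X_{a,b;p,q}\|_{\mathcal{F}(H)} \le (m+1)\bigl\|\sum_{|v|=m}\alpha_v \hat{U}_v\bigr\|_{\mathcal{F}(H)}$, and summing across the $4d^2$ entries delivers the stated inequality. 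The only real obstacle is the Chebyshev-block computation of the entries in the middle paragraph; once that structural identification is confirmed, the reduction to the scalar Haagerup inequality is automatic.
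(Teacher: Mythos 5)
Your proof is correct and follows the same strategy as the paper's: expand $S^v$ via the matrix-unit structure $S_i = \sum_j E_{ji}\otimes T_i^{(j)}$, observe that each of the $4d^2$ entries of $\sum_{|v|=m}\alpha_v S^v$ is (up to sign) a Chebyshev sum $\sum U_w$ whose coefficients form a subfamily of $\{\alpha_v\}_{|v|=m}$ via an injective deletion of end letters, apply Lemma~\ref{haagerup} entrywise, and finish with the triangle inequality. The ``Chebyshev-block computation'' you flag as the only remaining obstacle is exactly the explicit matrix form of $S^v$ already derived in the proof of Lemma~\ref{correspondence}, which the paper simply recalls, so no new induction is needed.
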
 
\begin{proof}
When $m =0,1$, one can easily derive the above inequality from Lemma \ref{correspondence} and Lemma \ref{haagerup}. Thus we may suppose from now on that $m \ge 2$. Recall from the proof of Lemma \ref{correspondence} that we have for $v = i_1^{k_1} i_2^{k_2} \cdots i_n^{k_n}$  
$$S^v  =E_{i_1 i_n} \otimes \left[\begin{pmatrix}s_{i_1} & -1 \\ 1 & 0\end{pmatrix}^{k_1}\prod_{l = 2}^n P\begin{pmatrix} s_{i_l} & -1 \\ 1 & 0 \end{pmatrix}^{k_l}\right]  + \sum_{j \neq i_1} E_{j i_n} \otimes \left[\prod_{l = 1}^n P\begin{pmatrix} s_{i_l} & -1 \\ 1 & 0 \end{pmatrix}^{k_l}\right].$$
Since we have
\begin{eqnarray*}
\begin{pmatrix}s_{i_1} & -1 \\ 1 & 0\end{pmatrix}^{k_1}\prod_{l = 2}^n P\begin{pmatrix} s_{i_l} & -1 \\ 1 & 0 \end{pmatrix}^{k_l} &=& \begin{pmatrix}U_{i_1^{k_1}} & 0 \\ U_{i_1^{k_1-1}} & 0\end{pmatrix}\left(\prod_{l = 2}^{n-1} U_{i_l^{k_l}}\right) \begin{pmatrix} U_{i_n^{k_n}} & -U_{i_n^{k_n -1}} \\ 0 & 0 \end{pmatrix} \\
&=& \begin{pmatrix}U_v & - U_{vi_n^{-1}} \\ U_{i_1^{-1}v} & - U_{i_1^{-1}v i_n^{-1}} \end{pmatrix}, 
\end{eqnarray*}
$S^v$ can be written as the following form,
$$
S^v = E_{i_1 i_n} \otimes  \begin{pmatrix}U_v & - U_{vi_n^{-1}} \\ U_{i_1^{-1}v} & - U_{i_1^{-1}v i_n^{-1}} \end{pmatrix} + \sum_{j \neq i_1} E_{j i_n} \otimes \begin{pmatrix}U_v & - U_{vi_n^{-1}} \\ 0 & 0 \end{pmatrix}.
$$
Thus we have
\begin{eqnarray*}
&\ &\sum_{|v| = m} \alpha_v S^v = \\
 &\ & \sum_{i, j \in [d]} \sum_{|v| = m -2} \alpha_{ivj} \left( E_{i j} \otimes  \begin{pmatrix}U_{ivj} & - U_{iv} \\ U_{vj} & - U_{v} \end{pmatrix} + \sum_{k \neq i} E_{k j} \otimes \begin{pmatrix}U_{ivj} & - U_{iv} \\ 0 & 0 \end{pmatrix}\right) \\
&=& \sum_{i, j \in [d]} E_{i j} \otimes \begin{pmatrix}\sum_{k \in [d]}\sum_{|v| = m -2} \alpha_{kvj}U_{kvj} & - \sum_{k \in [d]}\sum_{|v| = m -2} \alpha_{kvj}U_{kv} \\ \sum_{|v| = m -2} \alpha_{ivj}U_{vj} & -\sum_{|v| = m -2} \alpha_{ivj} U_{v} \end{pmatrix} \\
 &=& \sum_{i, j \in [d]} E_{i j} \otimes \begin{pmatrix}\sum_{|v| = m -1} \alpha_{vj}U_{vj} & - \sum_{|v| = m -1} \alpha_{vj}U_{v} \\ \sum_{|v| = m -2} \alpha_{ivj}U_{vj} & -\sum_{|v| = m -2} \alpha_{ivj} U_{v} \end{pmatrix}.
\end{eqnarray*}
 Note that all entries of $\sum_{|v|=m}\alpha_v S^v$ are sums of $U_v$ ($|v| = m, \ m-1, \ m-2$) whose coefficients are subsequences of $\{\alpha_v\}_{|v| =m}$. Therefore by Lemma \ref{haagerup}, operator norms of all entries of $\sum_{|v|=m}\alpha_v S^v$ are bounded by $(m+1)\|\sum_{|v| = m} \alpha_v \hat{U}_v\|_{\mathcal{F}(H)}$ and we obtain a desired estimate by the triangle inequality.    
\end{proof}
By the same argument in the Lemma 10 of \cite{DR97}, we have the following corollary. 
\begin{cor}\label{convergence}
Let $\{\alpha_v\}_{v \in [d]^*}$ be a family of complex numbers such that $\sum_{v \in [d]^*}|\alpha_v|^2 < \infty$ and $\sum_{v \in [d]^*} \alpha_v X^v $ is rational as a noncommutative formal power series. We put $a_m = \sum_{|v| = m} \alpha_v S^v \in M_d(\C) \otimes M_2(L^{\infty}(\bm{s}))$. Then $\sum_{m = 0}^{\infty} a_m$ converges in the operator norm.   
\end{cor}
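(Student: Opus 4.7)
The plan is to combine the Haagerup-type estimate of Lemma \ref{haagerup-application} with a one-variable rationality argument in the spirit of \cite[Lemma 10]{DR97}. Since $\{\hat{U}_v\}_{v \in [d]^*}$ is orthonormal in $\mathcal{F}(H)$, Lemma \ref{haagerup-application} gives
$$\|a_m\| \le 4d^2(m+1)\sqrt{\beta_m}, \quad \text{where } \beta_m := \sum_{|v|=m}|\alpha_v|^2.$$
Hence it suffices to show that $(\beta_m)_{m \ge 0}$ decays geometrically, as then $\sum_m (m+1)\sqrt{\beta_m}$ converges and so does $\sum_m a_m$ in operator norm.

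To obtain this geometric decay, I first observe that $\overline{Z} := \sum_v \overline{\alpha_v} X^v$ is rational: entrywise conjugation of a linear representation of $Z$ yields a linear representation of $\overline Z$, and by Theorem \ref{fundamental_theorem} recognizability is equivalent to rationality. Then Theorem \ref{Hadamard_product} shows that the Hadamard product $Z \odot \overline Z = \sum_v |\alpha_v|^2 X^v$ is rational as well. Next I would exploit a one-variable specialization: fixing a linear representation $(\lambda,\mu,\gamma)$ of $Z \odot \overline Z$ and setting $M := \sum_{i=1}^d \mu(X_i) \in M_n(\C)$, multiplicativity of $\mu$ gives $\sum_{|v|=m}\mu(v) = M^m$, so the one-variable generating function $f(t) := \sum_{m \ge 0} \beta_m t^m$ satisfies
$$f(t) = \sum_{m \ge 0} {}^t\lambda\, M^m \gamma\, t^m = {}^t\lambda \bigl(I - tM\bigr)^{-1}\gamma,$$
which is a rational function of the single variable $t$ (with a formal power series expansion at $t=0$).

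Finally I would apply Corollary \ref{estimation of coefficients} to $f(t)$: since $\beta_m \ge 0$ and $\sum_m \beta_m = \sum_v |\alpha_v|^2 < \infty$ by hypothesis, in particular $\sum_m \beta_m^2 < \infty$, so the corollary yields constants $M' > 0$ and $0 < c < 1$ with $\beta_m \le M' c^m$ for all $m$. Substituting back, $\|a_m\| \le 4d^2(m+1)\sqrt{M'}\, c^{m/2}$, which is summable in $m$, so $\sum_{m \ge 0} a_m$ converges in the operator norm of $M_d(\C) \otimes M_2(L^{\infty}(\bm{s}))$.

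The only delicate point is the transfer from noncommutative rationality of $Z \odot \overline Z$ to commutative rationality of $f(t)$. Rather than invoking a general substitution principle (which is subtle for noncommutative rational series), this is handled transparently through the linear representation by forming the matrix geometric series $(I - tM)^{-1}$, which reduces everything to finite-dimensional linear algebra. The rest is a direct combination of the Haagerup bound and Kronecker's estimate.
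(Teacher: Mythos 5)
Your proof is correct and follows essentially the same route as the paper: conjugate and take the Hadamard product to make $\sum_v |\alpha_v|^2 X^v$ rational, reduce to a one-variable rational series in order to invoke Corollary \ref{estimation of coefficients}, and then combine the resulting geometric decay of $\beta_m = \sum_{|v|=m}|\alpha_v|^2$ with the Haagerup-type bound from Lemma \ref{haagerup-application}. The paper performs the one-variable reduction by the terse phrase ``evaluate $X_1=\cdots=X_d=z$,'' whereas you make this specialization rigorous by passing through a linear representation $(\lambda,\mu,\gamma)$ and writing $f(t)={}^t\lambda(I-tM)^{-1}\gamma$ with $M=\sum_i\mu(i)$ --- a welcome clarification of the same underlying step.
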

\begin{proof}
Note that $\sum_{v \in [d]^*} \overline{\alpha_v} X^v$ is also rational (i.e. recognizable) by taking a complex conjugate of each entry of a linear representation of the recognizable series $\sum_{v \in [d]^*} \alpha_v X^v$. Since the Hadamard product of two rational series is also rational by Lemma \ref{Hadamard_product}, $\sum_{v \in [d]^*} |\alpha_v|^2 X^v$ is also rational as a noncommutative formal power series. By evaluating $X_1,X_2\ldots,X_d$ in one variable $z$ (i.e. $X_1=X_2=\cdots=X_d=z$), we can use the argument of Kronecker (see Corollary \ref{estimation of coefficients}) for the formal power series 
$$\sum_{m=0}^{\infty} \left(\sum_{|v| = m} |\alpha_v|^2 \right) z^m.$$ 
Thus there exists $M>0$ and $0<c<1$ such that
$$\sum_{|v| = m}|\alpha_v|^2 \le M c^m.$$
By using Lemma \ref{haagerup}, we can estimate the operator norm of $a_m$ as
$$\|a_m\| \le 4d^2 (m+1)\sqrt{\sum_{|v|=m} |\alpha_v|^2} \le M' (m+1) c'^{m}$$
for some constant $M' >0$ and $0<c'<1$. Thus $\sum_{m = 0}^{\infty} a_m$ converges in operator norm.              
\end{proof}
We also use the following technical lemma. 
\begin{lm}[Lemma 11 in \cite{DR97}]\label{rational_entry}
Let $n \in \N$ and $\mathcal{A}$ be a Banach algebra. If $x \in M_n(\mathcal{A})$ satisfies $\lim_{m \to \infty} \|x^m\| = 0$, then we have
\begin{enumerate}
\item $\sum_{m =0}^{\infty} x^m$ converges in the operator norm to $(1-x)^{-1} \in M_n(\mathcal{A})$.
\item All entries of $(1-x)^{-1}$ belong to the division closure of the subalgebra generated by all entries of $x$ in $\mathcal{A}$.
\end{enumerate} 
\end{lm}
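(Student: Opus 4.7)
I would handle the two parts separately.

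\emph{Part (1)} is a standard Neumann-series argument. Since $\|x^m\| \to 0$, pick $m_0$ with $\|x^{m_0}\| < 1$; writing any $m$ as $q m_0 + r$ with $0 \le r < m_0$ and using submultiplicativity gives a geometric bound $\|x^m\| \le C\|x^{m_0}\|^{q}$, so $\sum_m \|x^m\| < \infty$ and $\sum_{m=0}^\infty x^m$ converges absolutely to some $y \in M_n(\mathcal{A})$. The telescoping identity $(1-x)\sum_{m=0}^{N} x^m = 1 - x^{N+1}$ then yields $(1-x)y = 1$ in the limit, and similarly $y(1-x) = 1$.

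\emph{Part (2)} is the content, and my plan has two stages. Stage one is a \emph{reduction to the small-norm case}: from $(1-x)\bigl(\sum_{m=0}^{k-1} x^m\bigr) = 1 - x^k$ we get
\begin{equation*}
(1-x)^{-1} = \Bigl(\sum_{m=0}^{k-1} x^m\Bigr)(1-x^k)^{-1}.
\end{equation*}
The entries of $\sum_{m=0}^{k-1} x^m$ are polynomials in the entries of $x$, hence lie in the subalgebra $\mathcal{C}_0$ generated by them and \emph{a fortiori} in its division closure $\mathcal{C}$. So it suffices to prove that entries of $(1-x^k)^{-1}$ lie in $\mathcal{C}$; and since $\|x^k\| \to 0$, we are free to take $\|x^k\|$ as small as we like.

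Stage two is an \emph{induction on $n$} proving that there exist constants $\epsilon_n > 0$ such that whenever $y \in M_n(\mathcal{A})$ has $\|y\| < \epsilon_n$, the entries of $(1-y)^{-1}$ lie in the division closure of the subalgebra generated by the entries of $y$. The case $n=1$ is immediate. For the inductive step I would write $y = \bigl(\begin{smallmatrix} a & b \\ c & d \end{smallmatrix}\bigr)$ with $a$ a single entry, and use the Schur block factorization
\begin{equation*}
(1-y)^{-1} = \begin{pmatrix} (1-a)^{-1} + (1-a)^{-1} b S^{-1} c (1-a)^{-1} & (1-a)^{-1} b S^{-1} \\ S^{-1} c (1-a)^{-1} & S^{-1} \end{pmatrix},
\end{equation*}
with Schur complement $S = 1 - e$, where $e := d + c(1-a)^{-1}b$. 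Since $\|a\| \le \|y\| < 1$, the element $(1-a)^{-1}$ exists and lies in the division closure of $\{a\}$, and consequently the entries of $e$ lie in the division closure of the entries of $y$. A direct estimate yields $\|e\| \le \epsilon_n + \epsilon_n^2/(1-\epsilon_n)$, which can be made $< \epsilon_{n-1}$ by choosing $\epsilon_n$ small enough; the inductive hypothesis then produces $S^{-1}$ with entries in the division closure, and the block formula assembles the whole inverse.

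\emph{Main obstacle.} The naive approach of inducting directly on $n$ by partitioning the original $x$ fails, because principal submatrices need not inherit the condition $\|(\cdot)^m\| \to 0$: for instance $x = \bigl(\begin{smallmatrix} 2 & -4 \\ 1 & -2 \end{smallmatrix}\bigr)$ is nilpotent yet has diagonal entries of modulus $2$, so $1 - x_{11}$ is not even invertible by Neumann. The reduction from $x$ to $x^k$ for $k$ large in stage one is precisely what circumvents this: after that replacement, all entries and diagonal sub-blocks are genuinely small in norm, and then the Schur-complement recursion stays inside the small-norm regime and closes up.
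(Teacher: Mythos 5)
The paper does not supply a proof of this lemma; it is imported verbatim as Lemma~11 of Duchamp--Reutenauer \cite{DR97}, so there is no in-paper argument to compare against. Your proof is correct and self-contained, and it follows the same strategy one finds in the source: part (1) is the Neumann-series argument; for part (2) the essential idea is precisely the reduction
\[
(1-x)^{-1}=\Bigl(\sum_{m=0}^{k-1}x^m\Bigr)(1-x^k)^{-1}
\]
to the small-norm regime, after which a recursive block (Schur-complement) inversion stays in the small-norm regime at every level. Your identification of the ``main obstacle'' is the right one -- principal subblocks of $x$ need not satisfy $\|(\cdot)^m\|\to 0$, and it is only after passing to $x^k$ that the recursion closes.

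Two small points you should make explicit, though neither is a gap. First, when you apply the inductive hypothesis to the Schur complement $e=d+c(1-a)^{-1}b$, its entries lie in the division closure $\mathcal{C}$ of the entries of $y$ rather than in the subalgebra they generate, so you also need the routine fact that the division closure of any subalgebra of $\mathcal{C}$ is contained in $\mathcal{C}$ (equivalently, division closure is idempotent). Second, the bound $\|a\|\le\|y\|$ for a block $a$ of $y$ depends on the norm chosen on $M_n(\mathcal{A})$; for an arbitrary Banach-algebra norm on $M_n(\mathcal{A})$ one only has $\|a_{ij}\|\le C_n\|y\|$ for some constant $C_n$, which is all the argument needs but should be stated, since you only know $\mathcal{A}$ (and hence $M_n(\mathcal{A})$) is a Banach algebra, not a $C^*$-algebra.
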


\begin{pr}\label{onlyif}
Let $a  \in  L^{\infty}(\bm{s}) $. If $\{[r^*_i,a]\}_{i = 1}^d$ are finite rank operators on $\mathcal{F}(H)$, then $a \in C_{\mathrm{div}}(\bm{s})$.   
\end{pr}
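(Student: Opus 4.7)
The plan is to convert the finite-rank hypothesis into rationality of the generating series for the Chebyshev expansion of $\hat{a}$, and then to realize $a$ as a scalar entry of an inverse $(1-T)^{-1}$ for a matrix $T$ with entries in $\C\langle \bm{s}\rangle$, so that Lemma \ref{rational_entry} delivers the conclusion.

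Expand $\hat{a} = \sum_v \alpha_v \hat{U}_v$ in the orthonormal basis. For polynomial $a$, Lemma \ref{fundamental_equality_free} gives $[r_i^*,a]\hat{U}_w = \sum_{v'}\alpha_{v'iw^*}\hat{U}_{v'}$; approximating a general $a \in L^{\infty}(\bm{s})$ by polynomials in the strong operator topology (and using boundedness of $r_i^*$ together with the isometry between the Chebyshev coefficients and $L^2$ coefficients) extends this identity to all $a$. The finite-rank hypothesis on $\{[r_i^*,a]\}_{i=1}^d$ then forces the subspace of $\mathcal{F}(H)$ spanned by $\{\sum_{v'}\alpha_{v'u}\hat{U}_{v'}: u \in [d]^*\}$ to be finite dimensional, which, under the identification $\hat{U}_{v'}\leftrightarrow X^{v'}$, is exactly condition (1) of Theorem \ref{fundamental_theorem}. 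Hence $Z := \sum_v \alpha_v X^v$ is rational, and I fix a minimal linear representation $(\lambda,\mu,\gamma)$ of dimension $d'$ with $\alpha_v = {}^t\lambda\,\mu(v)\,\gamma$.

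The core of the argument is to rerun the Haagerup-and-Kronecker machinery of Corollary \ref{convergence} on the matrix-valued series $\sum_v \mu(v)\otimes S^v$. By Theorem \ref{minimal_representation} each entry $\mu(v)_{ij}$ is a finite complex-linear combination of coefficients $\alpha_{u_kvw_l}$, so $\sum_v|\mu(v)_{ij}|^2 < \infty$, and $\sum_v |\mu(v)_{ij}|^2 X^v$ is rational as the Hadamard product (Theorem \ref{Hadamard_product}) of the rational series $\sum_v \mu(v)_{ij} X^v$ with its complex conjugate. Collapsing variables and invoking Corollary \ref{estimation of coefficients} yields $\sum_{|v|=m}|\mu(v)_{ij}|^2 \le Mc^m$ for some $M>0$ and $0<c<1$. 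Setting
\[ T := \sum_{i=1}^d \mu(i)\otimes S_i \in M_{d'}(\C)\otimes M_d(\C)\otimes M_2(L^{\infty}(\bm{s})) \cong M_{d'\cdot 2d}(L^{\infty}(\bm{s})), \]
one has $T^m = \sum_{|v|=m}\mu(v)\otimes S^v$, and Lemma \ref{haagerup-application} applied block-by-block gives $\|T^m\| \le C(m+1)c^{m/2}$, so $\sum_m \|T^m\|<\infty$.

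Since $\|T^m\|\to 0$, Lemma \ref{rational_entry} with $\mathcal{A}=L^{\infty}(\bm{s})$ yields $(1-T)^{-1}=\sum_m T^m$ with every scalar entry in the division closure of the subalgebra of $L^{\infty}(\bm{s})$ generated by the entries of $T$; since those entries are $\C$-linear combinations of $1,s_1,\ldots,s_d$, every entry of $(1-T)^{-1}$ lies in $C_{\mathrm{div}}(\bm{s})$. Contracting with $\lambda,\gamma$ in the $M_{d'}$-factor gives $\sum_v \alpha_v S^v = {}^t\lambda\,(1-T)^{-1}\,\gamma$ as a $2d\times 2d$ matrix over $C_{\mathrm{div}}(\bm{s})$, and Lemma \ref{correspondence} identifies $\sum_v\alpha_v U_v$ as a specific scalar entry of this matrix; this scalar is then a norm-convergent element of $L^{\infty}(\bm{s})$ whose action on $\Omega$ equals $\hat{a}$, so it must equal $a$ by the separating property of $\Omega$, giving $a \in C_{\mathrm{div}}(\bm{s})$. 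The main obstacle is securing the summability $\sum_m\|T^m\|<\infty$: it is what lifts the combinatorial rationality of $Z$ to operator-norm convergence, and it relies on the whole chain of minimal representation, Hadamard product, Kronecker estimate, and Haagerup inequality working in tandem.
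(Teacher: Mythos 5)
Your proposal is correct and follows essentially the same route as the paper: it extracts the recognizability of $\sum_v\alpha_v X^v$ from the finite-rank hypothesis via Lemma \ref{fundamental_equality_free} and Theorem \ref{fundamental_theorem}, passes to a minimal linear representation, evaluates in $S_1,\dots,S_d$, and uses the Kronecker/Hadamard/Haagerup estimate to get operator-norm convergence of $(1-V(\bm{S}))^{-1}$ so that Lemma \ref{rational_entry} and the separating property of $\Omega$ finish the argument. The only cosmetic differences are that you spell out the entry-by-entry application of Corollary \ref{convergence} to $\mu(v)_{ij}$ (which the paper leaves implicit, citing the corollary together with Theorem \ref{minimal_representation}) and that Lemma \ref{correspondence} produces a fixed linear functional of the matrix $\sum_v\alpha_v S^v$ rather than literally a single scalar entry, but this does not affect the conclusion.
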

\begin{proof}
 Let $\hat{a} = \sum_{v \in [d]^*} \alpha_v \hat{U}_v$ be the expansion of $\hat{a}$ and $M$ be a $\C$-submodule of $\C\langle\langle [d]\rangle \rangle$ generated by $\sum_{v \in [d]^*}\alpha_{v} X^{v w^{-1}}$ ($w \in [d]^*$). Thanks to Lemma \ref{fundamental_equality_free}, we have for each $i \in [d]$ 
$$[r_i^*,a] \hat{U}_{w} = \sum_{v \in [d]^*} \alpha_v \hat{U}_{v(iw^*)^{-1}}.$$
Note that the linear map from $\mathcal{F}(H)$ to $\C\langle\langle [d]\rangle \rangle$ which maps $\hat{U}_v$ to $X^v$ for each $v \in [d]^*$ is injective. Therefore $M$ is finitely generated if $\{[r^*_i,a]\}_{i = 1}^d$ are finite rank operators. Thus the noncommutative formal power series $\sum_{v \in [d]^*}\alpha_{v} X^v$ is a recognizable series by Theorem \ref{fundamental_theorem}. 
In other words, there exists a linear representation which consists of a multiplicative morphism $\mu: [d]^* \to M_m(\C)$ and vectors $\lambda,\gamma \in \C^m$ such that $\alpha_v = {}^t \lambda \mu(v) \gamma$. Moreover by choosing a linear representation such that its dimension is minimal, we may assume from Theorem \ref{minimal_representation} that there exists $\{u_k\}_{k=1}^K,\{w_l\}_{l=1}^L \subset [d]^*$ such that $$\mu(v)_{i j} = \sum_{k l} c_{i j}^{k l} \alpha_{u_k v w_l}$$ for any $v \in [d]^*$ and $1 \le i , j \le m$.  
We put $V(\bm{X}) = \sum_{i \in [d]} \mu(i) X_i$. Note that since $\mu$ is multiplicative, $V(\bm{X})$ satisfies 
$$V(\bm{X})^m = \sum_{|v| = m} \mu(v) X^v$$
and, on the level of formal power series, we have 
\begin{eqnarray*}
\sum_{v \in [d]^*}\alpha_{v} X^v &=& {}^t \lambda \left[\sum_{m =0}^{\infty}V(\bm{X})^m\right] \gamma \\
                                           &=& {}^t \lambda [1-V(\bm{X})]^{-1} \gamma.
\end{eqnarray*}
 We evaluate $\bm{X} = (X_1,\ldots,X_d)$ in $\bm{S}=(S_1,\ldots,S_d)$, where the $S_i$'s are defined like in Lemma \ref{correspondence}. Then we can see that $\sum_{m =0}^{\infty}V(\bm{S})^m \in M_m(\C) \otimes M_{d}(\C) \otimes M_2(L^{\infty}(\bm{s}))$ converges in the operator norm from Corollary \ref{convergence} since all entries of $\mu(v)$ are given by finite linear spans of $\alpha_{u v w}$ for some words $u,w$. Thus we can conclude 
\begin{eqnarray*}
\sum_{v \in [d]^*} \alpha_v \hat{U}_v &=& \begin{pmatrix} 1 & 0 \end{pmatrix} ({}^t e_1 \otimes I_2)\sum_{v \in [d]^*}\alpha_{v} S^v   (e \otimes I_2) \begin{pmatrix} 1 \\ 0 \end{pmatrix} \Omega \\
&=&  \begin{pmatrix} 1 & 0 \end{pmatrix} ({}^t e_1 \otimes I_2)   {}^t \lambda [1-V(\bm{S})]^{-1} \gamma (e \otimes I_2) \begin{pmatrix} 1 \\ 0 \end{pmatrix} \Omega.
\end{eqnarray*}
Note that $\lim_{m \to \infty} \|V(\bm{S})^m\| = 0$, and we can apply Lemma \ref{rational_entry} to $V(\bm{S})$. Since $\Omega$ is a separating vector for $L^{\infty}(\bm{s})$, we conclude $a \in C_{\mathrm{div}}(\bm{s})$.
\end{proof}

\begin{proof}[Proof of Theorem \ref{main_result}]
 Let $\mathcal{A}$ be a subset of $L^{\infty}(\bm{s})$ such that $\{[r_i^*,a]\}_{i=1}^d$ are finite rank operators on $\mathcal{F}(H)$ for any element $a \in \mathcal{A}$. We will show $\mathcal{A}$ is a subalgebra of $L^{\infty}(\bm{s})$ which contains $\C \langle \bm{s} \rangle$ and satisfies for any $n \in \N,$
$$ X \in M_n(\mathcal{A}) \ \mathrm{is \ invertible \ in }\ M_n(L^{\infty}(\bm{s})) \implies X^{-1} \in M_n(\mathcal{A}).$$
Note that $s_i \in \mathcal{A}$ for any $ i\in [d]$ since $[r_i^*,s_j] = \delta^i_j P_{\Omega}$ is a finite rank operator for any $i,j \in [d]$. If $a, b \in \mathcal{A}$, then the following operators
\begin{eqnarray*}
[r_i^*,a+b] &=& [r_i^*,a] + [r_i^* ,b]  \\
{}[r_i^*, a b] &=& [r_i^*, a] b + a [r_i^*,b]
\end{eqnarray*} 
are finite rank operators for each $i \in [d]$. Thus $\mathcal{A}$ is a subalgebra of $L^{\infty}(\bm{s})$ which contains $\C\langle \bm{s} \rangle$. Let $n \in \N$ be given and assume $X \in M_n(\mathcal{A})$ is invertible in $M_n(L^{\infty}(\bm{s}))$. Then we have for any $i \in [d]$ and $1 \le j,k \le n$
\begin{eqnarray*}
[r_i^*, {}^t e_j X^{-1}e_k] &=& {}^t e_j [I_n \otimes r_i^*, X^{-1}] e_k \\
{}                             &=& - {}^t e_j X^{-1}[I_n \otimes r_i^*, X]X^{-1} e_k 
\end{eqnarray*}
where $I_n \otimes r_i^* \in M_n(\C) \otimes B(\mathcal{F}(H)) \cong M_n(B(\mathcal{F}(H)))$ is the operator such that all diagonal entries are $r_i^*$ and other entries are zero.  
Since $X \in M_n(\mathcal{A})$, all entries of $[I_n \otimes r_i^*, X]$ are finite rank operators and therefore $X^{-1} \in M_n(\mathcal{A})$. Since $C_{\mathrm{rat}}(\bm{s})$ is the smallest subalgebra satisfying above properties, we obtain $C_{\mathrm{rat}}(\bm{s}) \subset \mathcal{A}$. 

Moreover, we have $\mathcal{A} \subset C_{\mathrm{div}}(\bm{s}) \subset \overline{\C \langle \bm{s} \rangle}$ by Proposition \ref{onlyif} and thus $C_{\mathrm{rat}}(\bm{s})=  C_{\mathrm{div}}(\bm{s}) =\mathcal{A} \subset \overline{\C \langle \bm{s} \rangle}$. 
\end{proof}
\begin{rem}\label{for_creation_operator}
  Let us see what happens when we take $r_i$ and consider $[r_i,a]$ instead of $[r_i^*,a]$ for $a \in L^{\infty}(\bm{s})$. 
Indeed, for any $i \in [d]$, $[r_i,a]$ is a finite rank operator if and only if $[r_i^*,a]$ is also a finite rank operator since $[r_i + r_i^*, a] = 0$ and therefore $[r_i,a] = - [r_i^*,a]$ for any $a \in  L^{\infty}(\bm{s})$.
This is deduced from the commutativity of the left multiplication with the right multiplication. One can also see this directly via the following equalities
\begin{eqnarray*}
[r_i + r_i^*, s_j] &=& [r_i, s_j] + [r_i^* , s_j]\\
                           &=& - ([r_i^*, s_j])^* + \delta^i_j P_{\Omega} \\
                         &=&- \delta^i_j P_{\Omega}^* + \delta^i_j P_{\Omega}=0.
\end{eqnarray*}
for any $i,j \in [d]$ where we use $[a,b]^* = b^*a^*-a^* b^* = -[a^*,b^*]$ and $[r_i^* , s_j] = \delta^i_j P_{\Omega}$. Then we have $[r_i + r_i^*,a] = 0$ for any $a \in \C \langle \bm{s} \rangle$ and thus for any $a \in L^{\infty}(\bm{s})$.
\end{rem}
\begin{rem}
We remark that a tuple of operators $(r_1^*, r_2^*,\ldots,r_d^*)$ is known as a dual system which is introduced by D. Voiculescu in \cite{Voi98}.

In our setting, $(D_1,\ldots,D_d)\in B(\mathcal{F}(H))$ is called a dual system for $\bm{s}$ if we have for any $i,j \in [d]$
$$[D_i, s_j] = \delta^i_j P_{\Omega}.$$
From the proof of Lemma \ref{dual_system}, $(r_1^*, r_2^*,\ldots,r_d^*)$ is obviously a dual system and we have 
$$[D_i,a] = [r_i^*,a]$$
for each $i \in [d]$ and $a \in  L^{\infty}(\bm{s})$. Thus Theorem \ref{main_result} holds if we change $\{r_i^*\}_{i=1}^d$ by any dual system for $\bm{s}$.
 
One can also see that $(D_1,\ldots,D_d)\in B(\mathcal{F}(H))$ is a dual system for $\bm{s}$ if and only if 
$r_i^*-D_i$ belongs to $L^{\infty}(\bm{s})' $ for each $i \in [d]$ 
where $L^{\infty}(\bm{s})'$ is the commutant of $L^{\infty}(\bm{s})$. 

We have not proved Theorem \ref{main_result} for a general tuple of operators with a dual system yet and we leave it for future works.  
\end{rem}
\section{Rationality criterion for affiliated operators}\label{Rationality criterion for affiliated operators}
In this section, we extend our main result in the previous section to affiliated operators, which follows  results of Linnell \cite{Lin00}. Let us denote by $\widetilde{L^{\infty}(\bm{s})}$ the $\ast$-algebra of closed densely defined (unbounded) linear operators affiliated with $L^{\infty}(\bm{s})$. Note that any element $u \in \widetilde{L^{\infty}(\bm{s})}$ can be written as $u = f^{-1}a=b g^{-1}$ by using some $a,b,f,g \in L^{\infty}(\bm{s})$ where $f,g$ are nonzero divisors (i.e. $f x, gx \neq 0$ for any $x \in  L^{\infty}(\bm{s})\setminus \{0\})$ and thus invertible in $\widetilde{L^{\infty}(\bm{s})}$. For example, we can take $f = (1+ u u^*)^{-1}, \ a=(1+ u u^{*})^{-1} u, \ b = u (1+ u^*u)^{-1}, \ g = (1+ u^* u)^{-1}$. We focus on bounded operators $\{f r_i^* b - a r_i^* g\}_{i =1}^d$ instead of commutators $\{[r_i^*,u]\}_{i=1}^d$. Note that we have formally $f r_i^* b - a r_i^* g = f[r_i^*,u]g$ since $u = f^{-1}a=b g^{-1}$.

The following lemma tells us that we can find a common denominator of two affiliated operators. 
\begin{lm}\label{common_denominator}
Let $u_1,u_2 \in \widetilde{L^{\infty}(\bm{s})}$.  Then there exist $a_1,a_2,b_1,b_2 \in L^{\infty}(\bm{s})$ and $f, g \in L^{\infty}(\bm{s})$ such that $u_1 = f^{-1}a_1 = b_1 g^{-1}$ and $u_2 = f^{-1} a_2 = b_2 g^{-1}$. 
\end{lm}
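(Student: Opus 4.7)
The plan is to exploit the fact that $L^{\infty}(\bm{s})$ is a finite von Neumann algebra (it admits the faithful tracial state $\tau_{\Omega}$), so the set $T$ of nonzero divisors in $L^{\infty}(\bm{s})$ is a two-sided Ore set and $\widetilde{L^{\infty}(\bm{s})}$ is the classical Ore localization of $L^{\infty}(\bm{s})$ with respect to $T$. The content of the lemma is then the standard ``common denominator'' property of Ore localizations, applied twice (once on the left, once on the right).

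Concretely, I would start from individual presentations $u_i = f_i^{-1} a_i = b_i g_i^{-1}$ with $f_i, g_i \in T$, which are provided by the construction recalled immediately before the lemma statement. Applying the left Ore condition to the pair $(f_1, f_2)$ yields $h_1, h_2 \in L^{\infty}(\bm{s})$ with $h_1 f_2 = h_2 f_1$ and $h_1, h_2 \in T$ (if only one of them is a priori regular, the other must be as well, since the common product lies in $T$). Setting $f := h_1 f_2 = h_2 f_1 \in T$ and $a_1' := h_2 a_1,\ a_2' := h_1 a_2$, one checks
\begin{equation*}
f^{-1} a_1' = (h_2 f_1)^{-1}(h_2 a_1) = f_1^{-1} a_1 = u_1,
\end{equation*}
and similarly $f^{-1} a_2' = u_2$. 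The right Ore condition applied to $(g_1, g_2)$ produces, by the mirror-image argument, $k_1, k_2 \in L^{\infty}(\bm{s})$ with $g_1 k_1 = g_2 k_2 =: g \in T$, whence $u_i = (b_i k_i) g^{-1}$. Relabeling $b_i k_i$ as $b_i$ and $a_i'$ as $a_i$ gives the desired simultaneous presentations.

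The only non-routine input is the validity of the Ore condition for $T$ inside $L^{\infty}(\bm{s})$; this is where I expect the main (though still standard) obstacle. It is a classical theorem that for any finite von Neumann algebra $\mathcal{M}$, the regular elements form a left and right Ore set and the resulting Ore ring of fractions is $\widetilde{\mathcal{M}}$ (this goes back to Murray--von Neumann; see e.g.\ Berberian's treatment of the maximal ring of quotients of a finite $AW^*$-algebra). In the write-up I would either cite this fact directly, or give a short self-contained derivation using the polar decomposition: for any regular $f \in L^{\infty}(\bm{s})$, the closed range projection of $f$ equals $1$, so left multiplication by $f$ is a bijection of $\widetilde{L^{\infty}(\bm{s})}$, and in particular $f_1^{-1} f_2 \in \widetilde{L^{\infty}(\bm{s})}$ can be written as $h_2^{-1} h_1$ with $h_1, h_2 \in T$ by invoking the existence of left fractions for the single element $f_1^{-1} f_2$, which is the content already used to produce the presentation $u_i = f_i^{-1} a_i$ in the first place.
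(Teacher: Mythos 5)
Your proof is correct and is essentially the paper's argument, just phrased in the language of the Ore condition: the paper obtains $xf_1 = yf_2$ by writing $f_1 f_2^{-1} = x^{-1}y$ (using the left-fraction representation for elements of $\widetilde{L^{\infty}(\bm{s})}$ that it has already recalled), which is precisely one application of the left Ore condition to $(f_1,f_2)$, and symmetrically for $g_1, g_2$. Your $(h_1, h_2)$ and $(k_1, k_2)$ correspond to the paper's $(y,x)$ in the two halves, so the two write-ups produce the same common denominators.
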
 
\begin{proof}
Let $u_k = f_k^{-1}a_k = b_k g_k^{-1}$ for $k=1,2$ where $a_k,b_k,f_k,g_k \in \widetilde{L^{\infty}(\bm{s})}$. Then we can write $f_1 f_2^{-1} = x^{-1} y$ for some $x,y \in  L^{\infty}(\bm{s})$. Note that $f_1^{-1} = (y f_2)^{-1} x$ and $f_2^{-1} = (x f_1)^{-1}y$ and $x f_1 = y f_2$. We put $f = x f_1 = y f_2$.
Then we have $u_1 = f^{-1} x a_1$ and $u_2 = f^{-1} y a_2$. Similarly by representing $g_1^{-1}g_2 = x y^{-1}$ for some $x,y \in L^{\infty}(\bm{s})$, we have $u_1 = b_1 x g^{-1}$ and $u_2 = b_2 y g^{-1}$ where $g = g_1 x  = g_2 y$.
\end{proof}
We also use the following lemmas for bounded operators and affiliated operators (see \cite{Lin00}).
\begin{lm}[Lemma 2.1 in \cite{Lin00}] \label{lemma1}
Let $\theta:H \to K$ and $\phi : K \to H$ be bounded linear maps between Hilbert spaces.
\begin{enumerate}
\item If $\ker \phi = \{0\}$ and $\phi \theta$ has finite rank, then $\theta$ also has finite rank.\\
\item If $\Ima \theta$ is dense in $K$ and $\phi \theta$ has a finite rank, then $\phi$ also has a finite rank.
\end{enumerate}
\end{lm}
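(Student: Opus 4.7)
The plan is to reduce both parts to the elementary fact that the image of $\phi\theta$ is a finite-dimensional subspace of $H$, and therefore automatically closed. Neither part requires any Hilbert-space structure beyond continuity of $\phi$; the proof is purely operator-theoretic / linear-algebraic.

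For part (1), I would restrict $\phi$ to the linear subspace $\Ima\theta \subset K$. Since $\ker\phi=\{0\}$ on all of $K$, the restriction $\phi|_{\Ima\theta}$ is injective onto its image $\phi(\Ima\theta)=\Ima(\phi\theta)$. The latter is finite-dimensional by hypothesis, so a linear injection from $\Ima\theta$ onto a finite-dimensional space forces $\dim\Ima\theta<\infty$, i.e.\ $\theta$ has finite rank. No use of boundedness or completeness is needed here.

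For part (2), I would use continuity of $\phi$ combined with the density hypothesis. Since $\Ima(\phi\theta)$ is finite-dimensional, it is a closed subspace of $H$, so $\overline{\phi(\Ima\theta)}=\phi(\Ima\theta)$. By continuity of $\phi$,
\[
\phi\bigl(\overline{\Ima\theta}\bigr)\subset\overline{\phi(\Ima\theta)}=\phi(\Ima\theta)=\Ima(\phi\theta).
\]
The density assumption $\overline{\Ima\theta}=K$ then gives $\phi(K)\subset\Ima(\phi\theta)$, which is finite-dimensional, so $\phi$ has finite rank.

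There is essentially no obstacle: each part is a one-line argument. The only mild subtlety is in (2), where the continuity of $\phi$ only yields an inclusion into the closure of $\phi(\Ima\theta)$; the finite-dimensionality of $\Ima(\phi\theta)$ is exactly what is needed to collapse this closure and obtain the desired containment. The two statements can be viewed dually as two manifestations of the fact that finite-rank operators form an ideal, used here in a ``cancellation'' direction.
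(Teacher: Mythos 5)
Your proof is correct, and both parts are exactly the standard arguments: in (1) an injective linear map restricted to $\Ima\theta$ lands in the finite-dimensional space $\Ima(\phi\theta)$, forcing $\dim\Ima\theta<\infty$; in (2) continuity of $\phi$ plus the fact that finite-dimensional subspaces are closed gives $\phi(K)=\phi(\overline{\Ima\theta})\subset\Ima(\phi\theta)$. Note that the paper does not prove this lemma itself, but simply quotes it as Lemma 2.1 of Linnell \cite{Lin00}; your argument is the one Linnell's proof follows, so there is no genuine divergence to report.
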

The following lemma is proved in Lemma 2.2 in \cite{Lin00} in terms of the free group, and the proof can be also applied to our setting.  

\begin{lm}[Lemma 2.2 in \cite{Lin00}]\label{lemma2}
Let $\theta \in L^{\infty}(\bm{s})$. If $\theta$ is a nonzero divisor, then $\ker \theta = \{0\}$ and $\Ima \theta $ is dense in $\mathcal{F}(H)$. 
\end{lm}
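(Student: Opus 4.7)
The plan is to invoke the polar decomposition of $\theta$ inside $L^{\infty}(\bm{s})$ and then exploit the finiteness of the von Neumann algebra, which is encoded in the faithful tracial state $\tau_{\Omega}$. Write $\theta = v|\theta|$ with $|\theta| = (\theta^{*}\theta)^{1/2}$ and $v \in L^{\infty}(\bm{s})$ a partial isometry with $v^{*}v = \mathrm{supp}(|\theta|)$ and $vv^{*} = \mathrm{supp}(|\theta^{*}|)$; these projections lie in $L^{\infty}(\bm{s})$ by functional calculus on $\theta^{*}\theta$ and $\theta\theta^{*}$.

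First I would show $\ker \theta = \{0\}$. The projection $1 - v^{*}v \in L^{\infty}(\bm{s})$ is precisely the projection onto $\ker \theta$, and by construction $\theta(1 - v^{*}v) = v|\theta|(1 - v^{*}v) = 0$. Since $\theta$ is a nonzero divisor in $L^{\infty}(\bm{s})$, this forces $1 - v^{*}v = 0$, i.e.\ $v^{*}v = 1$, whence $\ker|\theta| = \{0\}$ and so $\ker \theta = \{0\}$.

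Next I would pass to $\overline{\Ima \theta}$ via its orthogonal complement $\ker \theta^{*}$. From $\theta^{*} = |\theta|v^{*}$ and the injectivity of $|\theta|$ just established, $\ker \theta^{*} = \ker v^{*} = (1 - vv^{*})\mathcal{F}(H)$, so the task reduces to proving $vv^{*} = 1$. This is where finiteness comes in: by the trace property of $\tau_{\Omega}$ on $L^{\infty}(\bm{s})$,
\[
\tau_{\Omega}(1 - vv^{*}) \;=\; 1 - \tau_{\Omega}(vv^{*}) \;=\; 1 - \tau_{\Omega}(v^{*}v) \;=\; 0.
\]
Since $1 - vv^{*}$ is a positive element and $\tau_{\Omega}$ is faithful (because $\Omega$ is separating for $L^{\infty}(\bm{s})$), we conclude $vv^{*} = 1$, hence $\ker \theta^{*} = \{0\}$ and $\overline{\Ima \theta} = \mathcal{F}(H)$.

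I expect no serious obstacle here: the argument is standard for finite von Neumann algebras and only uses (i) that polar-decomposition ingredients and spectral projections of $\theta$ live in $L^{\infty}(\bm{s})$, (ii) the tracial property of $\tau_{\Omega}$ already recorded in the preliminaries, and (iii) the faithfulness of $\tau_{\Omega}$, which follows from $\Omega$ being separating. The only conceptual point to be careful about is that the hypothesis ``nonzero divisor'' as stated in the paper is the one-sided condition $\theta x \neq 0$ for $x \in L^{\infty}(\bm{s})\setminus\{0\}$; this is exactly what drives Step~2 above, and the two-sided conclusion ($\overline{\Ima \theta}$ dense) is recovered in Step~3 for free from the trace.
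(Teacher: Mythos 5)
Your proof is correct and follows essentially the standard finite-von-Neumann-algebra argument that Linnell uses in \cite{Lin00} and that the paper invokes by citation in lieu of reproducing a proof. The two key ingredients you isolate—that the support projection $v^{*}v$ (equivalently the kernel projection $1-v^{*}v$) belongs to $L^{\infty}(\bm{s})$ and is annihilated by left multiplication by $\theta$, and that the faithful trace $\tau_{\Omega}$ forces $\tau_{\Omega}(v^{*}v)=\tau_{\Omega}(vv^{*})$ and hence $vv^{*}=1$—are exactly the ones that drive Linnell's argument as well.
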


We define $R(\bm{s})$ and $R'(\bm{s})$ as subsets of $\widetilde{L^{\infty}(\bm{s})}$.
We say $u \in R(\bm{s})$ if $\{f r_i^* b - a r_i^* g\}_{i =1}^d$ are finite rank operators for any expression $u = f^{-1}a = b g^{-1}$ where $a,b,f,g \in L^{\infty}(\bm{s})$. We say $u \in R'(\bm{s})$ if we can write $u = f^{-1}a = b g^{-1}$ for some $a,b,f,g \in L^{\infty}(\bm{s})$ such that  $\{f r_i^* b - a r_i^* g\}_{i =1}^d$ 
are finite rank operators. Note that we have $R(\bm{s}) \subset R'(\bm{s})$ by definition.

To define rationality, we consider the division closure $D(\bm{s})$ of $\C \langle \bm{s} \rangle$ in $\widetilde{L^{\infty}(\bm{s})}$. From the results in \cite{MSY19}, $D(\bm{s})$ forms the free skew field of fractions of $\C\langle\bm{s}\rangle$. Note that the rational closure of $\C \langle \bm{s} \rangle$ in $\widetilde{L^{\infty}(\bm{s})}$ coincides with $D(\bm{s})$ since $D(\bm{s})$ is a skew field (see \cite[Proposition 4.9]{MSY19}).
\begin{rem}\label{creation_affiliate}
Let $i \in [d]$ and $u= f^{-1}a=b g^{-1} \in \widetilde{L^{\infty}(\bm{s})}$ where $a,b,f,g \in L^{\infty}(\bm{s})$ and assume $f r_i^* a - b r_i^* g$ is a finite rank operator. Then thanks to Remark \ref{for_creation_operator}, we have  
$$f(r_i^* + r_i) b - a (r_i^*+r_i)g = (f b- a g) (r_i^*+r_i)=0$$
for any $i \in [d]$ where we use $f b= a g$. Thus $f r_i b- a r_i g$ is also a finite rank operator.  
\end{rem}
Let us state the main theorem in this section. 
\begin{thm}\label{Linnell}
We have $R(\bm{s}) = R'(\bm{s}) = D(\bm{s})$ and $D(\bm{s}) \cap L^{\infty}(\bm{s}) = C_{\mathrm{div}}(\bm{s}).$ Moreover, for any $u \in D(\bm{s})$, there exists $a,b,f,g \in C_{\mathrm{div}}(\bm{s})$ such that $u = f^{-1} a = b g^{-1}$.   
\end{thm}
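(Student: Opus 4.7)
The plan is to establish the chain of inclusions
$$D(\bm{s}) \subset R(\bm{s}) \subset R'(\bm{s}) \subset D(\bm{s}),$$
which gives $R(\bm{s}) = R'(\bm{s}) = D(\bm{s})$, and to deduce the remaining two assertions along the way. The central formal identity, valid on the dense core where all operators act and obtained by expanding $fb = ag = fug$ together with the Leibniz rule, is
$$f r_i^* b - a r_i^* g \;=\; f\,[r_i^*,u]\,g \;=\; f[r_i^*,b] - a[r_i^*,g] \;=\; [r_i^*,a]g - [r_i^*,f]b.$$
Since every $f, g$ playing the role of a denominator for an element of $\widetilde{L^{\infty}(\bm{s})}$ is a nonzero divisor in $L^{\infty}(\bm{s})$, Lemmas \ref{lemma1} and \ref{lemma2} together show that the finite-rank property of $f r_i^* b - a r_i^* g$ is independent of the chosen representation $(a,b,f,g)$: passing between two representations sandwiches the quantity by (affiliated) operators with trivial kernel and dense range, and finite-rankness propagates through such sandwiches. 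Hence $R(\bm{s}) = R'(\bm{s})$, which we denote by $\mathcal{R}$.

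For $D(\bm{s}) \subset \mathcal{R}$, I would verify that $\mathcal{R}$ is a subalgebra of $\widetilde{L^{\infty}(\bm{s})}$ containing $\C\langle\bm{s}\rangle$ and closed under taking inverses of $\widetilde{L^{\infty}(\bm{s})}$-invertible elements; minimality of the division closure then forces the inclusion. Polynomials $p$ are in $\mathcal{R}$ via the trivial representation $(a,b,f,g) = (p,p,1,1)$, which reduces the finite-rank condition to $[r_i^*,p]$ being finite rank --- exactly Theorem \ref{main_result}. Closure under sums and products follows by invoking Lemma \ref{common_denominator} to pass to common left and right denominators and then using the Leibniz rule on the central identity. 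For inverses, if $u = f^{-1}a = bg^{-1}$ is invertible in $\widetilde{L^{\infty}(\bm{s})}$ then so are $a$ and $b$, and the representation $u^{-1} = a^{-1}f = gb^{-1}$ has FR-quantity $-(f r_i^* b - a r_i^* g)$, still of finite rank.

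The hard inclusion is $\mathcal{R} \subset D(\bm{s})$. Given $u = f^{-1}a = bg^{-1}$ in $\mathcal{R}$, the plan is to realise $u$ as the $(1,2)$-entry of $\begin{pmatrix} f & -a \\ 0 & 1 \end{pmatrix}^{-1}$ and to arrange, after possibly modifying the representation, that $f, a \in C_{\mathrm{div}}(\bm{s})$. Since $C_{\mathrm{div}}(\bm{s}) = C_{\mathrm{rat}}(\bm{s})$ by Theorem \ref{main_result}, this puts the $2\times2$ matrix in $M_2(C_{\mathrm{rat}}(\bm{s}))$ and its inverse in the rational closure inside $M_2(\widetilde{L^{\infty}(\bm{s})})$, i.e., in $M_2(D(\bm{s}))$. \textbf{The main obstacle} is extracting individual rationality of $f$ and $a$ from the coupled finite-rank hypothesis on $f r_i^* b - a r_i^* g$, since the individual commutators $[r_i^*, f], [r_i^*, a]$ need not be finite rank. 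I plan to overcome this following \cite{Lin00}: apply Theorem \ref{main_result} to bounded auxiliary operators --- notably the common numerator $fb = ag \in L^{\infty}(\bm{s})$ --- and bootstrap to the rationality of the denominators through a matrix-valued argument that replaces the original $(a,b,f,g)$ by a rational representative of the same $u$.

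The identity $D(\bm{s}) \cap L^{\infty}(\bm{s}) = C_{\mathrm{div}}(\bm{s})$ is then immediate. The inclusion $\supset$ is a consequence of $C_{\mathrm{div}}(\bm{s}) \subset \mathcal{R} \cap L^{\infty}(\bm{s})$, proved above. For $\subset$, any $u \in D(\bm{s}) \cap L^{\infty}(\bm{s}) = \mathcal{R} \cap L^{\infty}(\bm{s})$ admits the trivial representation $u = 1^{-1}u = u\cdot 1^{-1}$, which reduces the finite-rank condition to $[r_i^*,u]$ finite rank and forces $u \in C_{\mathrm{div}}(\bm{s})$ via Theorem \ref{main_result}. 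Finally, the existence of $a,b,f,g \in C_{\mathrm{div}}(\bm{s})$ with $u = f^{-1}a = bg^{-1}$ for every $u \in D(\bm{s})$ follows either as a direct byproduct of the matrix construction used in the $\mathcal{R} \subset D(\bm{s})$ step, or by inductively propagating rational denominators through $+,\cdot,{}^{-1}$ along the construction of $u$ from polynomials inside the skew field $D(\bm{s})$.
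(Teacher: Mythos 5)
Your overall architecture ($D \subset R = R' \subset D$) and your treatment of $D(\bm{s}) \subset R(\bm{s}) = R'(\bm{s})$ match the paper closely: $R = R'$ via the independence-of-representation argument through Lemmas~\ref{lemma1} and~\ref{lemma2}, and $D \subset R$ by checking $\mathcal{R}$ contains polynomials, is a subalgebra (common denominators via Lemma~\ref{common_denominator} plus Leibniz on your central identity), and is division closed. The identity $D(\bm{s}) \cap L^{\infty}(\bm{s}) = C_{\mathrm{div}}(\bm{s})$ via the trivial representation $u = 1^{-1}u$ and Theorem~\ref{main_result} is also right, and is exactly Remark~\ref{boundedcase}.

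The genuine gap is in $\mathcal{R} \subset D(\bm{s})$, precisely the step you flag as the main obstacle, and you do not resolve it. Your idea of applying Theorem~\ref{main_result} to the common numerator $fb = ag$ does not get off the ground: for an arbitrary representation of $u \in \mathcal{R}$, there is no reason $[r_i^*, fb]$ should be finite rank, so you cannot conclude $fb \in C_{\mathrm{div}}(\bm{s})$ from the hypothesis on $fr_i^*b - ar_i^*g$. Likewise, the phrase ``replace $(a,b,f,g)$ by a rational representative'' begs the question of why such a representative exists. The paper closes this gap by one additional structural fact that is missing from your list of properties of $\mathcal{R}$: closure under the involution (Lemma~\ref{involution}, proved via $g^*r_i^*a^* - b^*r_i^*f^* = -(fr_ib - ar_ig)^*$ together with Remark~\ref{creation_affiliate}). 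Once $\mathcal{R}$ is a $*$-subalgebra closed under inversion, for any $u \in \mathcal{R}$ the canonical pair $f = (1+uu^*)^{-1}$ and $a = (1+uu^*)^{-1}u$ automatically lies in $\mathcal{R}$; being bounded, they lie in $\mathcal{R} \cap L^{\infty}(\bm{s}) = C_{\mathrm{div}}(\bm{s})$, and then $u = f^{-1}a$ is directly in the division closure of $C_{\mathrm{div}}(\bm{s})$ inside $\widetilde{L^{\infty}(\bm{s})}$, hence in $D(\bm{s})$. This also hands you the final assertion for free, since $b = u(1+u^*u)^{-1}$ and $g = (1+u^*u)^{-1}$ work on the other side. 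Your $2\times2$ matrix realization of $u$ as the $(1,2)$-entry of $\begin{pmatrix} f & -a \\ 0 & 1 \end{pmatrix}^{-1}$ is correct but unnecessary once you have $f, a \in C_{\mathrm{div}}(\bm{s})$, because $f$ alone being invertible suffices to put $f^{-1}a$ in the division closure; what you actually lack is the route to $f, a \in C_{\mathrm{div}}(\bm{s})$, and that route is closure under $*$ plus the specific choice of denominator.
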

In order to prove this theorem, first we show $R(\bm{s}) = R'(\bm{s})$.
\begin{lm}\label{specific_choice}
Let $u \in \widetilde{L^{\infty}(\bm{s})}$ and assume $u = f^{-1} a = b g^{-1}$. If $\{f r_i^* b - a r_i^* g\}_{i=1}^d$   
are finite rank operators, then $u \in R(\bm{s})$.
In other words, we have $R(\bm{s}) = R'(\bm{s})$.  
\end{lm}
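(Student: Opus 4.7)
The plan is to prove that the finite-rank condition does not depend on the choice of representation of $u$, so that if it holds for some tuple $(f,a,b,g)$ with $u=f^{-1}a=bg^{-1}$, it holds for every such tuple. Given the distinguished representation $u = f^{-1}a = bg^{-1}$ with $\{fr_i^*b - ar_i^*g\}_{i=1}^d$ finite rank and any other $u = \tilde{f}^{-1}\tilde{a} = \tilde{b}\tilde{g}^{-1}$, the strategy is to build a joint refinement through which both representations factor, compute the target expression in both ways, and then strip off the auxiliary factors with Lemma \ref{lemma1} and Lemma \ref{lemma2}.

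To build the refinement, I would adapt the manipulation from the proof of Lemma \ref{common_denominator}. Since $f$ and $\tilde{f}$ are nonzero divisors in $L^{\infty}(\bm{s})$, the element $\tilde{f}f^{-1}$ is invertible in $\widetilde{L^{\infty}(\bm{s})}$; write $\tilde{f}f^{-1} = y^{-1}x$ with $y \in L^{\infty}(\bm{s})$ a nonzero divisor. Crucially, because $\tilde{f}f^{-1}$ is invertible, $x = y\tilde{f}f^{-1}$ is also invertible in $\widetilde{L^{\infty}(\bm{s})}$, hence a nonzero divisor. Setting $F = xf = y\tilde{f}$ and $A = Fu = xa = y\tilde{a}$ gives a common left denominator. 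A mirror construction based on $g^{-1}\tilde{g} = pq^{-1}$, with $p$ and $q$ both nonzero divisors by the same argument, yields $G = gp = \tilde{g}q$ and $B = uG = bp = \tilde{b}q$, so that $u = F^{-1}A = BG^{-1}$ simultaneously refines both representations.

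Computing $Fr_i^*B - Ar_i^*G$ via the two factorizations then yields
\[
Fr_i^*B - Ar_i^*G \;=\; x(fr_i^*b - ar_i^*g)p \;=\; y(\tilde{f}r_i^*\tilde{b} - \tilde{a}r_i^*\tilde{g})q.
\]
The middle expression is finite rank by hypothesis together with the stability of finite-rank operators under bounded multiplication, so the right-hand expression is finite rank as well. Since $y$ and $q$ are nonzero divisors, Lemma \ref{lemma2} gives $\ker y = \{0\}$ and $\overline{\Ima q} = \mathcal{F}(H)$. Applying Lemma \ref{lemma1}(1) with $\phi = y$ and $\theta = (\tilde{f}r_i^*\tilde{b} - \tilde{a}r_i^*\tilde{g})q$ peels off $y$, and then applying Lemma \ref{lemma1}(2) with $\theta = q$ and $\phi = \tilde{f}r_i^*\tilde{b} - \tilde{a}r_i^*\tilde{g}$ peels off $q$, leaving $\tilde{f}r_i^*\tilde{b} - \tilde{a}r_i^*\tilde{g}$ finite rank for each $i$.

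The main obstacle is the subtlety in step one: a generic Ore presentation $\tilde{f}f^{-1} = y^{-1}x$ only guarantees $y$ is a nonzero divisor, which would render the symmetric peeling at the end impossible on the left. The invertibility of $\tilde{f}f^{-1}$ (and dually $g^{-1}\tilde{g}$) in $\widetilde{L^{\infty}(\bm{s})}$ is precisely what forces the numerator to be a nonzero divisor too, so that all four auxiliary elements $x,y,p,q$ are nonzero divisors simultaneously. Once this is secured, the remainder is a direct computation together with the imported lemmas from \cite{Lin00}.
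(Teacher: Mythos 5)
Your proof is correct and follows essentially the same route as the paper: rewrite the two Ore fractions $\tilde f f^{-1}$ and $g^{-1}\tilde g$ so as to obtain an identity of the form $x(fr_i^*b - ar_i^*g)p = y(\tilde f r_i^*\tilde b - \tilde a r_i^*\tilde g)q$, observe the left side is finite rank, and strip $y$ and $q$ via Lemmas \ref{lemma1} and \ref{lemma2}. The ``joint refinement'' $u=F^{-1}A=BG^{-1}$ is a clean way to package the same algebra that the paper does directly with $y(\cdot)y' = x(\cdot)x'$.

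One remark, though: the paragraph you flag as ``the main obstacle'' is not actually load-bearing. In your arrangement $\tilde f f^{-1}=y^{-1}x$ and $g^{-1}\tilde g = pq^{-1}$, the elements you peel off are $y$ (on the left, needing $\ker y=\{0\}$) and $q$ (on the right, needing dense image), and both are the \emph{denominators} of their respective Ore fractions, hence nonzero divisors for free. The invertibility of $x$ and $p$ is never used: the identity $Fr_i^*B-Ar_i^*G=x(\cdots)p=y(\cdots)q$ is a purely algebraic consequence of $F=xf=y\tilde f$, $A=xa=y\tilde a$, $G=gp=\tilde g q$, $B=bp=\tilde bq$, and finite-rankness passes through $x(\cdot)p$ without any injectivity or density requirements on $x,p$. (The nonzero-divisor property of the numerator \emph{would} matter if you wrote the Ore fraction the other way around, as the paper does with $ff_1^{-1}=x^{-1}y$, where $y$ is the numerator and still needs to be a nonzero divisor; your choice of orientation sidesteps that subtlety, so the extra justification is superfluous rather than crucial.)
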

\begin{proof}
Let $u=f_1^{-1} a_1 = b_1 g_1^{-1}$ where $a_1,b_1,f_1,g_1 \in L^{\infty}(\bm{s})$. We need to show 
$f_1 r_i^* b_1 - a_1 r_i^* g_1$ 
is a finite rank operator for any $i \in [d]$. First we note that there exist $x , y \in  L^{\infty}(\bm{s})$ such that $f f_1^{-1} = x^{-1}y$. We infer that  $x f = y f_1$ and $y a_1 = x f f_1^{-1} a_1 = x f f^{-1} a = x a $. Thus we obtain
$$y (f_1 r_i^* b_1 - a_1 r_i^* g_1) = y f_1 r_i^* b_1 - y a_1 r_i^* g_1 = x (f  r_i^* b_1 - a r_i^* g_1). $$
On the other hand, since there exist some $x',y' \in  L^{\infty}(\bm{s})$ such that $g^{-1}g_1 = x' y'^{-1}$, we obtain in the same way as before that
$$(f  r_i^* b_1 - a r_i^* g_1) y'= (f  r_i^* b - a r_i^* g) x'.$$
By combining them, we have
\begin{eqnarray*}
y (f_1 r_i^* b_1 - a_1 r_i^* g_1) y'  &=& x (f  r_i^* b_1 - a r_i^* g_1) y' \\
                                        &=& x (f  r_i^* b - a r_i^* g) x'.
\end{eqnarray*}
Since $f  r_i^* b - a r_i^* g$ is a finite rank operator for any $i \in [d]$ and $y,y'$ are non-zero divisors, $f_1 r_i^* b_1 - a_1 r_i^* g_1$ is also a finite rank operator for any $i \in [d]$ by Lemmas \ref{lemma1} and \ref{lemma2}; hence, we see that $u \in R(\bm{s})$. This shows $R'(\bm{s}) \subset R(\bm{s})$ and thus we conclude $R(\bm{s}) = R'(\bm{s})$.   
\end{proof}
\begin{rem}\label{boundedcase}
If $u \in R(\bm{s})\cap L^{\infty}(\bm{s})$, since we can write $u = u 1^{-1}=1^{-1}u$, $\{[r_i^*,u]\}_{i=1}^d$ are finite rank operators. Thanks to Theorem \ref{main_result}, we have $u \in C_{\mathrm{div}}(\bm{s})$. On the other hand, if $u \in C_{\mathrm{div}}(\bm{s})$, then $\{[r_i^*,u]\}_{i=1}^d$ are finite rank operators, and thus $u \in R'(\bm{s})$ by the same theorem. By using Lemma \ref{specific_choice}, we have    
$$R(\bm{s})\cap L^{\infty}(\bm{s}) = R'(\bm{s}) \cap L^{\infty}(\bm{s}) = C_{\mathrm{div}}(\bm{s}).$$ 
\end{rem}

We will prove four lemmas in order to deduce that $R(\bm{s})$ is a $*$-subalgebra which is closed under taking inverse.  
\begin{lm}\label{addition}
If $u_1, u_2 \in R(\bm{s})$, then $u_1 + u_2 \in R(\bm{s}).$
\end{lm}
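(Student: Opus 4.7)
The plan is to reduce the problem to a single, well-chosen representation of $u_1+u_2$ and then exploit bilinearity of the expression $f r_i^* b - a r_i^* g$. Thanks to Lemma \ref{specific_choice}, it suffices to exhibit one expression $u_1+u_2 = f^{-1}c = d g^{-1}$ (with $c,d,f,g \in L^{\infty}(\bm{s})$) for which the operators $\{f r_i^* d - c r_i^* g\}_{i=1}^d$ are finite rank; membership in $R(\bm{s})$ will then follow for every other representation.

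First I would apply Lemma \ref{common_denominator} to $u_1,u_2$ to obtain a common left denominator $f$ and a common right denominator $g$, writing
\[
u_1 = f^{-1}a_1 = b_1 g^{-1}, \qquad u_2 = f^{-1}a_2 = b_2 g^{-1},
\]
with $a_1,a_2,b_1,b_2,f,g \in L^{\infty}(\bm{s})$. Then $u_1+u_2 = f^{-1}(a_1+a_2) = (b_1+b_2)g^{-1}$ is an admissible representation with common left and right denominators. The natural candidate for the ``commutator surrogate'' becomes
\[
f r_i^* (b_1+b_2) - (a_1+a_2) r_i^* g = \bigl(f r_i^* b_1 - a_1 r_i^* g\bigr) + \bigl(f r_i^* b_2 - a_2 r_i^* g\bigr).
\]

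Now the assumption $u_1,u_2 \in R(\bm{s})$ kicks in: applied to the specific representations of $u_1$ and $u_2$ just written, it tells us that each of $f r_i^* b_1 - a_1 r_i^* g$ and $f r_i^* b_2 - a_2 r_i^* g$ is a finite rank operator on $\mathcal{F}(H)$ for every $i \in [d]$. Since the sum of two finite rank operators is finite rank, the displayed identity gives that $f r_i^* (b_1+b_2) - (a_1+a_2) r_i^* g$ is finite rank for every $i$. Hence $u_1+u_2 \in R'(\bm{s})$, and Lemma \ref{specific_choice} upgrades this to $u_1+u_2 \in R(\bm{s})$.

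I do not expect any real obstacle here; the only subtle point is that $R(\bm{s})$ is a priori defined via \emph{every} denominator expression, so one cannot simply add the fixed representations of $u_1$ and $u_2$ given at the outset. The role of Lemma \ref{common_denominator} is precisely to produce a single pair $(f,g)$ serving both operators, and the role of Lemma \ref{specific_choice} is to avoid having to verify the finite rank condition for all other representations of $u_1+u_2$.
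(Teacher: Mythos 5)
Your proof is correct and follows essentially the same route as the paper: use Lemma \ref{common_denominator} to get a shared left denominator $f$ and right denominator $g$, observe that the resulting expression for $u_1+u_2$ splits additively into the two finite-rank operators provided by $u_1,u_2\in R(\bm{s})$, and invoke Lemma \ref{specific_choice} to conclude. No gaps or differences worth noting.
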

\begin{proof}
By Lemma \ref{common_denominator}, we can write $u_k = f^{-1} a_k = b_k g^{-1}$ for $k=1,2$. Then $u_1 + u_2 = f^{-1} (a_1 + a_2) = (b_1 + b_2) g^{-1}$. Since $u_1,u_2 \in R(\bm{s})$ and for any $i \in [d]$ 
$$f r_i^* (b_1 + b_2) - (a_1+a_2) r_i^* g = (f r_i^* b_1 - a_1 r_i^* g) + (f r_i^* b_2 - a_2 r_i^* g),$$
we see that $f r_i^* (b_1 + b_2) - (a_1+a_2) r_i^* g$ is a finite rank operator for any $ i\in [d]$. 
 Therefore $u_1 + u_2 \in R(\bm{s})$ by Lemma \ref{specific_choice}.    
\end{proof}
\begin{lm}\label{multiplication}
If $u_1, u_2 \in R(\bm{s})$, then $u_1 u_2 \in R(\bm{s})$.
\end{lm}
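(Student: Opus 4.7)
The plan is to manufacture a single representation $u_1 u_2 = f^{-1} a = b g^{-1}$ from the given data for $u_1$ and $u_2$, and then establish a telescoping identity showing that each $f r_i^* b - a r_i^* g$ splits as the sum of two bounded amplifications of the finite-rank operators already known to exist for $u_1$ and $u_2$. Lemma~\ref{specific_choice} will then promote this to $u_1 u_2 \in R(\bm{s})$, since it suffices to exhibit one good choice of numerators and denominators.

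Write $u_k = f_k^{-1} a_k = b_k g_k^{-1}$ for $k = 1, 2$. Since $L^{\infty}(\bm{s})$ is a finite von Neumann algebra, the non-zero-divisors form a two-sided Ore set in $L^{\infty}(\bm{s})$; in particular, for any $x \in L^{\infty}(\bm{s})$ and any non-zero-divisor $y \in L^{\infty}(\bm{s})$ there exist $\tilde x, \tilde y \in L^{\infty}(\bm{s})$ with $\tilde y$ a non-zero-divisor and $\tilde y x = \tilde x y$. I would apply this twice: once to produce $\tilde f_1, \tilde a_1 \in L^{\infty}(\bm{s})$ with $\tilde f_1$ a non-zero-divisor and $\tilde f_1 a_1 = \tilde a_1 f_2$ (so that $a_1 f_2^{-1} = \tilde f_1^{-1} \tilde a_1$), and once to produce $\tilde b_2, \tilde g_2 \in L^{\infty}(\bm{s})$ with $\tilde g_2$ a non-zero-divisor and $g_1 \tilde b_2 = b_2 \tilde g_2$ (so that $g_1^{-1} b_2 = \tilde b_2 \tilde g_2^{-1}$). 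Setting $f := \tilde f_1 f_1$, $a := \tilde a_1 a_2$, $b := b_1 \tilde b_2$, $g := g_2 \tilde g_2$, one checks immediately that $f, g$ are non-zero-divisors in $L^{\infty}(\bm{s})$ and that $u_1 u_2 = f^{-1} a = b g^{-1}$.

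The crux is then the algebraic identity
\[
f r_i^* b - a r_i^* g = \tilde f_1 \bigl(f_1 r_i^* b_1 - a_1 r_i^* g_1\bigr) \tilde b_2 + \tilde a_1 \bigl(f_2 r_i^* b_2 - a_2 r_i^* g_2\bigr) \tilde g_2,
\]
obtained by adding and subtracting $\tilde f_1 a_1 r_i^* g_1 \tilde b_2$ on the left-hand side and then invoking the Ore relations $\tilde f_1 a_1 = \tilde a_1 f_2$ and $g_1 \tilde b_2 = b_2 \tilde g_2$ to recognise this middle term as $\tilde a_1 f_2 r_i^* b_2 \tilde g_2$. By hypothesis both parenthesised expressions are finite-rank operators for each $i \in [d]$; sandwiching by bounded operators preserves finite rank, so $f r_i^* b - a r_i^* g$ is finite rank. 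Thus $u_1 u_2 \in R'(\bm{s})$, and Lemma~\ref{specific_choice} yields $u_1 u_2 \in R(\bm{s})$.

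I anticipate no conceptual obstacle; the whole argument is algebraic bookkeeping. The one point to be careful about is that both Ore moves must be arranged so that the new denominators $\tilde f_1$ and $\tilde g_2$ are non-zero-divisors, which is precisely what allows $f$ and $g$ to be legitimate denominators and is guaranteed by the finite von Neumann algebra structure of $L^{\infty}(\bm{s})$.
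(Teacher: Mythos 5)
Your proof is correct and follows essentially the same route as the paper: both write $u_k = f_k^{-1}a_k = b_k g_k^{-1}$, use the Ore property to rewrite $a_1 f_2^{-1}$ and $g_1^{-1}b_2$ with a common left (resp.\ right) denominator, form the same $f,a,b,g$, verify the same telescoping identity, and invoke Lemma~\ref{specific_choice}. Your notation $\tilde f_1,\tilde a_1,\tilde b_2,\tilde g_2$ plays exactly the role of the paper's $x,y,p,q$, and you supply slightly more explicit justification for the Ore step, but there is no substantive difference.
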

\begin{proof}
Let us write $u_k = f_k^{-1} a_k = b_k g_k^{-1}$ where $a_k,b_k,f_k,g_k \in L^{\infty}(\bm{s})$ for $ k=1,2$. Let $a_1 f_2^{-1} = x^{-1} y$ and $g_1^{-1}b_2 = p q^{-1}$ where $p,q,x,y \in L^{\infty}(\bm{s})$. Then $u_1u_2 = f_1^{-1} a_1 f_2^{-1} a_2 = (x f_1)^{-1} y a_2$ and $u_1 u_2 = b_1 g_1^{-1} b_2 g_2^{-1} = b_1 p (g_2q)^{-1}$. Since $x a_1 = y f_2$ and $g_1 p = b_2 q$, we have 
$$ x f_1 r_i^* b_1 p - y a_2  r_i^* g_2 q = x ( f _1 r_i^* b_1 - a_1 r_i^* g_1) p + y ( f_2 r_i^* b_2 - a_2 r_i^* g_2)q.$$
Since $u_1,u_2 \in R(\bm{s})$, this operator is a finite rank operator for any $i \in [d]$,  
and thus $u_1 u_2 \in R(\bm{s})$ by Lemma \ref{specific_choice}. 
\end{proof}
\begin{lm}\label{inversion}
If $u \in R(\bm{s})$ is invertible, then $u^{-1} \in R(\bm{s})$.  
\end{lm}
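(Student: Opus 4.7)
The plan is to observe that the finite rank condition defining $R(\bm{s})$ is essentially symmetric under inversion, so once one picks a representation of $u^{-1}$ obtained from the representation of $u$ by swapping numerator and denominator, the commutator expression for $u^{-1}$ becomes (up to sign) the same expression as for $u$.

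First, I would fix a representation $u = f^{-1}a = b g^{-1}$ with $a,b,f,g \in L^{\infty}(\bm{s})$ and $f,g$ nonzero divisors, which exists because $u \in R(\bm{s}) \subset \widetilde{L^{\infty}(\bm{s})}$. By definition of $R(\bm{s})$, the operators $\{f r_i^* b - a r_i^* g\}_{i=1}^d$ are all finite rank. Next, since $u$ is invertible in $\widetilde{L^{\infty}(\bm{s})}$ and both $f$ and $g$ are invertible in $\widetilde{L^{\infty}(\bm{s})}$ (as nonzero divisors in a finite von Neumann algebra), it follows that $a = fu$ and $b = ug$ are invertible in $\widetilde{L^{\infty}(\bm{s})}$. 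In particular, $a$ and $b$ are nonzero divisors in $L^{\infty}(\bm{s})$, so the identities $u^{-1} = a^{-1} f = g b^{-1}$ furnish a valid representation of $u^{-1}$ of the form required in the definition of $R'(\bm{s})$, with new numerators $f,g$ and new denominators $a,b$.

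Now I would simply compute the corresponding commutator expression: for each $i \in [d]$,
$$a r_i^* g - f r_i^* b = -(f r_i^* b - a r_i^* g),$$
which is finite rank by the hypothesis $u \in R(\bm{s})$. Hence the specific representation $u^{-1} = a^{-1} f = g b^{-1}$ witnesses $u^{-1} \in R'(\bm{s})$. Invoking Lemma \ref{specific_choice}, which identifies $R'(\bm{s})$ with $R(\bm{s})$, we conclude $u^{-1} \in R(\bm{s})$.

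There is no real obstacle here; the only subtle point is verifying that $a$ and $b$ are indeed nonzero divisors in $L^{\infty}(\bm{s})$ so that $a^{-1}f$ and $gb^{-1}$ qualify as legitimate representations in the sense of $\widetilde{L^{\infty}(\bm{s})}$. This is immediate from invertibility of $u$ together with invertibility of $f,g$ in $\widetilde{L^{\infty}(\bm{s})}$, using that a bounded operator whose image in $\widetilde{L^{\infty}(\bm{s})}$ is invertible must itself be a nonzero divisor.
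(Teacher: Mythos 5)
Your proof is correct and follows essentially the same route as the paper: write $u = f^{-1}a = bg^{-1}$, observe $u^{-1} = a^{-1}f = gb^{-1}$, note that the associated commutator expression $a r_i^* g - f r_i^* b$ is just $-(f r_i^* b - a r_i^* g)$ and hence finite rank, and invoke Lemma \ref{specific_choice}. The extra detail you supply — verifying that $a,b$ are nonzero divisors so that $a^{-1}f = gb^{-1}$ is a legitimate fraction representation — is a point the paper leaves implicit, and it is a reasonable thing to spell out.
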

\begin{proof}
If $u \in R(\bm{s})$, then we can write $u = f^{-1}a = b g^{-1}$ and $f r_i^* b - a r_i^* g$ has a finite rank for any $i \in [d]$. In addition if $u$ is invertible, we have $u^{-1} =a^{-1} f = g b^{-1}$. Since $a r_i^* g - f r_i^* b = - (f r_i^* b - a r_i^* g)$  
for each $i \in [d]$, $u \in R(\bm{s})$ by Lemma \ref{specific_choice}.  
\end{proof}
\begin{lm}\label{involution}
If $u \in R(\bm{s})$, then $u^* \in R(\bm{s})$.  
\end{lm}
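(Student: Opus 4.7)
The plan is to check membership in $R'(\bm{s})$ for one convenient representation of $u^*$ and then invoke Lemma \ref{specific_choice} to upgrade this to membership in $R(\bm{s})$.

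First I would fix any representation $u = f^{-1}a = bg^{-1}$ with $a,b,f,g\in L^{\infty}(\bm{s})$ coming from the assumption $u\in R(\bm{s})$, so that $\{f r_i^* b - a r_i^* g\}_{i=1}^d$ are finite rank. Taking adjoints of the defining identity gives the natural candidate representation for $u^*$, namely $u^* = (f^{-1}a)^* = a^*(f^*)^{-1}$ and $u^* = (bg^{-1})^* = (g^*)^{-1}b^*$. So with the assignment $F := g^*,\ A := b^*,\ B := a^*,\ G := f^*$ I have $u^* = F^{-1}A = BG^{-1}$ with all four elements in $L^{\infty}(\bm{s})$.

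The content is then to verify that $Fr_i^* B - A r_i^* G = g^* r_i^* a^* - b^* r_i^* f^*$ is finite rank for each $i\in[d]$. The key observation is that this expression is, up to sign, the adjoint of $f r_i b - a r_i g$:
\[
\bigl(f r_i b - a r_i g\bigr)^* = b^* r_i^* f^* - g^* r_i^* a^* = -\bigl(F r_i^* B - A r_i^* G\bigr).
\]
Now by Remark \ref{creation_affiliate}, the finite rank property of $\{f r_i^* b - a r_i^* g\}_{i=1}^d$ (which we have from $u\in R(\bm{s})$) is equivalent to the finite rank property of $\{f r_i b - a r_i g\}_{i=1}^d$, since $r_i + r_i^*$ commutes with every element of $L^{\infty}(\bm{s})$. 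Finite rank is preserved under taking adjoints, so each $F r_i^* B - A r_i^* G$ is finite rank. This places $u^*$ in $R'(\bm{s})$, and Lemma \ref{specific_choice} yields $u^*\in R(\bm{s})$.

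I do not anticipate a real obstacle here: the only subtlety is that taking adjoints turns $r_i^*$ into $r_i$, and that mismatch is precisely what Remark \ref{creation_affiliate} is designed to handle. Everything else is bookkeeping with the adjoint operation and the identification of a suitable representation $u^* = F^{-1}A = BG^{-1}$.
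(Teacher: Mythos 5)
Your proof is correct and follows essentially the same route as the paper: take the adjoint representation $u^* = (g^*)^{-1}b^* = a^*(f^*)^{-1}$, rewrite $g^* r_i^* a^* - b^* r_i^* f^*$ as $-(f r_i b - a r_i g)^*$, use Remark \ref{creation_affiliate} to trade $r_i^*$ for $r_i$, note finite rank is stable under adjoints, and finish with Lemma \ref{specific_choice}. The only cosmetic difference is that you explicitly name $F,A,B,G$; the content and every step match the paper's argument.
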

\begin{proof}
If $u \in R(\bm{s})$, then we can write $u = f^{-1}a = b g^{-1}$ and $f r_i^* b -a r_i^* g$ is a finite rank operator for any $i \in [d]$. Since $u^* = {g^*}^{-1}b^* = a^* {f^*}^{-1}$, we need to check that $g^* r_i^* a^* - b^* r_i^* f^*$ is a finite rank operator.

Since $T^*$ is a finite rank operator if $T$ is a finite rank operator on a Hilbert space and $f r_i b -a r_i g$ is a finite rank operator by Remark \ref{creation_affiliate}, $g^* r_i^* a^* - b^* r_i^* f^* = -(f r_i b - a r_i g)^*$ is also a finite rank operator for any $i \in [d]$.  
Thus we conclude $u^* \in R(\bm{s})$ by Lemma \ref{specific_choice}. 
\end{proof}
\begin{proof}[Proof of Theorem \ref{Linnell}]
By Lemmas \ref{addition}, \ref{multiplication}, \ref{inversion}, we see that $R(\bm{s})$ is a subalgebra of $\widetilde{L^{\infty}(\bm{s})}$ which contains $\C \langle \bm{s} \rangle $ and is closed under taking inverse. Thus $D(\bm{s}) \subset R(\bm{s})$. 

Now, let $u \in R(\bm{s})$. Since $R(\bm{s})$ is also closed under the involution by Lemma \ref{involution}, $a = (1 + u u^*)^{-1}u$ and $f = (1+u u^*)^{-1}$ belong to $R(\bm{s})\cap L^{\infty}(\bm{s}) = C_{\mathrm{div}}(\bm{s})$ (see Remark \ref{boundedcase}) and therefore $u = f^{-1} a$ belongs to the division closure of $C_{\mathrm{div}}(\bm{s})$ in $\widetilde{L^{\infty}(\bm{s})}$. Since $D(\bm{s})$ is the division closed subalgebra of  $\widetilde{L^{\infty}(\bm{s})}$ which contains $C_{\mathrm{div}}(\bm{s})$, it also contains the division closure of $C_{\mathrm{div}}(\bm{s})$ in $\widetilde{L^{\infty}(\bm{s})}$ (both coincide actually). Thus $u \in D(\bm{s})$.   
\end{proof}

In Theorem \ref{Linnell}, we show an equivalent condition to $u \in D(\bm{s})$ by using bounded operators $\{f r_i^* b - a r_i^* g\}_{i =1}^d$ instead of commutators $\{[r_i^*,u]\}_{i=1}^d$. As we remark in the beginning of Section \ref{Rationality criterion for affiliated operators}, both operators $f r_i^* b - a r_i^* g$ and $[r_i^*,u]$ are formally connected by $f r_i^* b - a r_i^* g = f[r_i^*,u]g$.

In the following proposition, we give another characterization of $u \in D(\bm{s})$ by using commutators $\{[r_i^*,u]\}_{i=1}^d$, which is an analogue of Proposition 1.2 in \cite{Lin00}.    
\begin{pr}\label{unbounded_commutator}
Let $u \in \widetilde{L^{\infty}(\bm{s})}$. Then $u \in D(\bm{s})$ if and only if there exists a linear subspace $M$ of finite codimension in $\mathcal{F}(H)$ such that $M \cap \bigcap_{i \in [d]} \dom(u r_i^*)=M \cap \dom(u) $ and $r^*_i u = u r_i^*$ on $M \cap \dom(u)$ for each $i \in [d]$, where $\dom(u)$ denotes the domain of $u$.     
\end{pr}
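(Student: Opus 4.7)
The plan follows the scheme of Proposition~1.2 in \cite{Lin00}, with Theorem~\ref{Linnell} playing the role of Linnell's earlier characterization of rationality.

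\textbf{``If'' direction.} Suppose such an $M$ exists. I take the canonical bounded representatives $f = (1+uu^*)^{-1}$, $g = (1+u^*u)^{-1}$, $a = fu$, $b = ug$, which all lie in $C_{\mathrm{div}}(\bm{s})$ by Lemmas~\ref{addition}, \ref{multiplication}, \ref{inversion}, \ref{involution} together with Remark~\ref{boundedcase}, and satisfy $u = f^{-1}a = bg^{-1}$. The crucial feature of $g$ is that it maps $\mathcal{F}(H)$ into $\dom(u^*u) \subseteq \dom(u)$, so $b\xi = u(g\xi)$ holds literally for every $\xi \in \mathcal{F}(H)$. For $\xi \in g^{-1}(M)$, the vector $g\xi$ lies in $M \cap \dom(u) = M \cap \bigcap_i \dom(ur_i^*)$, so $r_i^* g\xi \in \dom(u)$ and $r_i^* ug\xi = ur_i^* g\xi$ by hypothesis; hence
\[F_i\xi = fr_i^* b\xi - ar_i^* g\xi = fr_i^* ug\xi - fur_i^* g\xi = f(r_i^* u - ur_i^*) g\xi = 0.\]
Since $g^{-1}(M)$ has finite codimension in $\mathcal{F}(H)$ (the induced map $\mathcal{F}(H)/g^{-1}(M) \hookrightarrow \mathcal{F}(H)/M$ is injective), $F_i$ vanishes on its closure, a closed subspace of finite codimension, and is therefore of finite rank. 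Theorem~\ref{Linnell} then yields $u \in R'(\bm{s}) = D(\bm{s})$.

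\textbf{``Only if'' direction.} For $u \in D(\bm{s})$, Theorem~\ref{Linnell} supplies $a, b, f, g \in C_{\mathrm{div}}(\bm{s})$ with $u = f^{-1}a = bg^{-1}$ and $F_i = fr_i^* b - ar_i^* g$ of finite rank; by Theorem~\ref{main_result}, the commutators $[r_i^*, a]$, $[r_i^*, b]$, $[r_i^*, f]$, $[r_i^*, g]$ are also of finite rank. I set
\[M = \bigcap_{i=1}^d\bigl(\ker F_i \cap \ker[r_i^*, a] \cap \ker[r_i^*, b] \cap \ker[r_i^*, f] \cap \ker[r_i^*, g]\bigr),\]
a closed subspace of finite codimension in $\mathcal{F}(H)$. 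Using that $u$ is the closure of $f^{-1}a$, so that $\dom(u) = \{\zeta : a\zeta \in \Ima(f)\}$, the identity $fr_i^* b\xi = ar_i^* g\xi$ for $\xi \in \ker F_i$ immediately places $ar_i^* g\xi$ in $\Ima(f)$, forcing $r_i^* g\xi \in \dom(u)$; the injectivity of $f$ then gives the pointwise relation $r_i^* ug\xi = ur_i^* g\xi$. Combining this with $[r_i^*, g]\xi = [r_i^*, b]\xi = 0$ on $M$, and exploiting symmetrically the dual factorization $u = bg^{-1}$ together with $[r_i^*, f]\xi = [r_i^*, a]\xi = 0$, one verifies both required properties of $M$.

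\textbf{Main obstacle.} The delicate step---exactly the technical heart of Linnell's argument---is transferring the commutation obtained at $g\xi$ (a direct consequence of $F_i\xi = 0$) to the point $\xi$ itself; equivalently, showing that $\xi \in M \cap \dom(u)$ forces $r_i^* \xi \in \dom(u)$. The naive computation $ar_i^*\xi = r_i^* a\xi = r_i^*(fu\xi)$ a priori involves $[r_i^*, f]$ evaluated at $u\xi$ rather than at $\xi$, and $u\xi$ need not lie in $M$. The resolution is to play the two factorizations of $u$ against each other on $M$, invoking all four commutator conditions in the definition of $M$ and the injectivity of both $f$ and $g$.
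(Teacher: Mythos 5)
The ``if'' direction of your proposal is essentially the paper's argument: set $N = g^{-1}(M)$, use the hypothesis to see $fr_i^* b - a r_i^* g$ vanishes on $N$, note $N$ has finite codimension, and invoke Theorem~\ref{Linnell}. (Your parenthetical that the canonical $a,b,f,g$ lie in $C_{\mathrm{div}}(\bm{s})$ is circular at this stage of the ``if'' proof, since we do not yet know $u\in D(\bm{s})$; but nothing in the computation requires it, so that is only a stray remark, not an error.)

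The ``only if'' direction has a genuine gap, which you yourself identify in your ``Main obstacle'' paragraph but do not resolve. Your chosen
\[
M = \bigcap_{i=1}^d\bigl(\ker F_i \cap \ker[r_i^*, a] \cap \ker[r_i^*, b] \cap \ker[r_i^*, f] \cap \ker[r_i^*, g]\bigr)
\]
cannot give the required equality $M\cap\bigcap_i\dom(ur_i^*)=M\cap\dom(u)$. Concretely, if $\xi\in M\cap\dom(u)$ with $a\xi=f\eta$ and $\eta=u\xi$, then $ar_i^*\xi=r_i^*a\xi=r_i^*f\eta$, and to put this in $\Ima f$ one needs $\eta\in\ker[r_i^*,f]$; membership of $\xi$ in $\ker[r_i^*,f]$ says nothing about $\eta=u\xi$. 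Your proposed resolution of ``playing the two factorizations against each other'' is not spelled out, and it is not what works: observe that once $\xi\in\ker[r_i^*,b]\cap\ker[r_i^*,g]$, the identity $fb=ag$ forces $F_i\xi=f[r_i^*,b]\xi-a[r_i^*,g]\xi=0$, so the $\ker F_i$ factor in your $M$ is redundant and carries no new information. There is also nothing in your $M$ to handle the reverse inclusion $M\cap\bigcap_i\dom(ur_i^*)\subseteq M\cap\dom(u)$: one needs $a\xi\perp\Omega$ (so that $a\xi=\sum_i r_ir_i^*a\xi$) together with control of $r_ir_i^*a\xi$ modulo $\Ima f$, neither of which your $M$ provides.

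The paper's construction is structurally different and is the actual key idea you are missing: it writes $u=f^{-1}a$ with $a,f\in C_{\mathrm{div}}(\bm{s})$ only, sets $N=\bigcap_i(\ker[r_i^*,f]\cap\ker[r_i,f])$, chooses $M_1$ of finite codimension with $M_1\cap f\mathcal{F}(H)\subseteq fN$ and $M_2^i$ with $M_2^i\cap r_i f\mathcal{F}(H)\subseteq f\mathcal{F}(H)$, and then puts
\[
M = a^{-1}\bigl[(\C\Omega)^{\perp}\bigr]\cap a^{-1}(M_1)\cap\bigcap_i\ker[r_i^*,a]\cap\bigcap_i(r_ir_i^*a)^{-1}(M_2^i).
\]
The decisive term is $a^{-1}(M_1)$: it pulls the constraint back through $a$ so that $a\xi\in M_1\cap f\mathcal{F}(H)\subseteq fN$ whenever $\xi\in M\cap\dom(u)$, which forces $\eta=u\xi\in N$ and lets the commutator identity be applied at $\eta$ rather than at $\xi$. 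Your $M$ imposes the commutator conditions only at $\xi$, which is exactly what you flag as insufficient, and offers no replacement for the preimage trick. You should also include the $[r_i,f]$ commutators (not just $[r_i^*,f]$) and the term $a^{-1}[(\C\Omega)^\perp]$, both of which are needed for the reverse inclusion.
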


\begin{proof}
 We use the well-known fact that for any subspace $M$ of finite codimension in a linear space $H$ and for any linear map $T$ on $H$, the preimage $T^{-1}(M)$ is also a subspace of finite codimension in $H$ (since $T$ induces an injective linear map from the quotient subspace $H/T^{-1}(M)$ to $H/M$ which is finite-dimensional). 

In addition, an intersection $M_1 \cap M_2$ of two subspaces $M_1, M_2$ of finite codimension in $H$ is also a subspace of finite codimension (since $(M_1+M_2)/M_2$ is isomorphic to $M_1/(M_1 \cap M_2)$ and the two quotient spaces $H/M_1$, $(M_1+M_2)/M_2$ are finite-dimensional).  

Now we suppose $M$ is a subspace of finite codimension such that $r^*_i u= u r_i^*$ on $M\cap \dom(u)=M \cap \bigcap_{i \in [d]} \dom(u r_i^*)$ for any $i \in [d]$. We can write $u$ as $u =f^{-1}a= b g^{-1}$ where $a,b,f,g \in L^{\infty}(\bm{s})$. Note that $N = g^{-1}(M)$ is a subspace of finite codimension in $\mathcal{F}(H)$ such that $g N \subset M \cap \dom(u) = M \cap \bigcap_{i \in [d]} \dom (u r_i^*).$ Thus we have for any $\xi \in N$ 

$$(f r_i^* b   - a r_i^* g) \xi= f (r_i^*u g \xi - u r_i^* g \xi)  =0.$$
Since $N$ has a finite codimension in $\mathcal{F}(H)$, $f r_i^* b - a r_i^* g$ is a finite rank operator for each $i \in [d]$ and thus $u \in D(\bm{s})$ by Theorem \ref{Linnell}.  

On the other hand, if $u \in D(\bm{s})$, then by Theorem \ref{Linnell} there exists $a, f \in C_{\mathrm{div}}(\bm{s})$ such that $u = f^{-1} a$. Note that $f^{-1} a$ forms a closed operator even though we see it as a composition of unbounded operators (we do not have to take closure). Thus we can write $\dom(u) = \{\xi \in \mathcal{F}(H) ; a\xi \in f \mathcal{F}(H)\}$.
Since $a, f \in C_{\mathrm{div}}(\bm{s})$, $[r_i^*, f], \ [r_i,f], \ [r_i^*,a]$ are finite rank operators for each $i \in [d]$. Then kernels of these operators have finite codimensions.
We put for each $i \in [d]$

$$N_i = \ker[r_i^*,f] \cap \ker[r_i, f]$$
 and define $N$ as
$$N = \bigcap_{i \in [d]} N_i. $$
    Note that $N$ is a subspace of finite codimension in $\mathcal{F}(H)$ and there exists a subspace $M_1$ of finite codimension in $\mathcal{F}(H)$ such that   

$$M_1 \cap f \mathcal{F}(H) \subset f N.$$ 
For example, we can take $M_1$ as a direct sum of $f N$ and a complementary subspace of $f\mathcal{F}(H)\subset \mathcal{F}(H)$.
Since $r_i f N = f r_i N$ for each $i \in [d]$, there exists a subspace $M_2^i$ of finite codimension in $\mathcal{F}(H)$ for each $i \in [d]$ such that

$$M_2^i \cap r_i f \mathcal{F}(H) \subset f \mathcal{F}(H).$$

We can take $M_2^i$ either as a direct sum of $r_i f N$ and complementary subspace of $f\mathcal{F}(H) \subset \mathcal{F}(H)$ as above, or as $(r_i^*)^{-1}(M_1)$.
  Then we put $M$ by
$$M = a^{-1}[(\C \Omega)^{\perp}] \cap a^{-1}(M_1) \cap \bigcap_{i \in [d]} \ker[r_i^*,a] \cap \bigcap_{i \in [d]} (r_i r_i^* a)^{-1} (M_2^i).$$ 
Then $M$ is obviously a subspace of finite codimension in $\mathcal{F}(H)$. 

Let us show 
 $M \cap \bigcap_{i \in [d]} \dom(u r_i^*)=M \cap \dom(u). $
If $\xi \in M \cap \dom(u)$, then $a \xi = f \eta$ for $\eta \in N$ since $a \xi \in M_1 \cap f \mathcal{F}(H) \subset f N$. Since $\xi \in \ker[r_i^*,a]$ and $\eta \in N$, we obtain $a r_i^* \xi = f r_i^* \eta$ which implies $\xi \in \dom(u r_i^*)$ for any $i \in [d]$. On the other hand, if $\xi \in M \cap  \bigcap_{i \in [d]} \dom(u r_i^*)$, then $r_i^* a \xi=a r_i^* \xi \in f \mathcal{F}(H)$ for each $i \in [d]$.
By multiplying by $r_i$, we have $r_i r_i^* a \xi \in M_2^i \cap r_i f \mathcal{F}(H) \subset f \mathcal{F}(H)$ for each $i \in [d]$. Then there exists $\eta_i \in \mathcal{F}(H)$ for each $i \in [d]$ such that $r_i r_i^* a \xi = f \eta_i$. Since $a \xi \in (\C \Omega)^{\perp}$, we have 
$$a\xi = \sum_{i \in [d]} r_i r_i^* a \xi = f \sum_{i \in [d]} \eta_i $$
which implies $\xi \in \dom(u)$. Therefore we have $M \cap \bigcap_{i \in [d]} \dom(u r_i^*)=M \cap \dom(u). $

In order to see $u r_i^* = r_i^* u$ on $M \cap \dom(u)$, we take $\xi \in M \cap \dom(u)$. Then, as shown above, there exists $\eta \in N$ such that $a \xi  = f \eta$; hence $r_i^*u \xi = r_i^* \eta$ for each $i \in [d]$. Moreover since $a r_i^* \xi = f r_i^* \eta$, as shown above, we conclude $u r_i^* \xi = r_i^* \eta = r_i^* u \xi$.  
\end{proof}
\begin{rem}
The proof of Proposition \ref{unbounded_commutator} also works when we take $\{r_i\}_{i \in [d]}$ instead of $\{r_i^*\}_{i \in [d]}$. In this case, we can say that there exists a subspace of finite codimension in $\mathcal{F}(H)$ such that $M \cap \dom(u) = M \cap \dom(u r_i)$ and $u r_i = r_i u$ on $M \cap \dom(u)$ for each $i \in [d]$.
\end{rem}
\bibliographystyle{amsalpha}

\end{document}